\theoremstyle{plain}  
\newtheorem{theorem}{Theorem}[section]
\newtheorem{lemma}{Lemma}[section]
\newtheorem{corollary}{Corollary}[section]
\newtheorem{proposition}{Proposition}[section]
\theoremstyle{remark}
\newtheorem{definition}{Definition}[section]
\newtheorem{remark}{Remark}
\newtheorem{condition}{Condition}
\renewcommand{\le}{\leqslant}
\renewcommand{\ge}{\geqslant}
\renewcommand{\leq}{\leqslant}
\renewcommand{\geq}{\geqslant}
\newcommand{\bswu}{\boldsymbol{W}}
\newcommand{\bswl}{\boldsymbol{w}}
\newcommand{\such}{\, :\,  }
\newcommand{\fdd}{\mathrm{f.d.d.}}
\newcommand{\seminorm}{p}
\newcommand{\FDP}{\text{FDP}}
\newcommand{\FPR}{\text{FPR}}
\newcommand{\simiid}{\stackrel{\mathrm{iid}}\sim}
\newcommand{\dnorm}{\mathcal{N}}
\newcommand{\tauBHm}{\tau_{\mathrm{BH},m}}
\newcommand{\e}{\mathbb{E}}
\newcommand{\var}{\mathrm{var}}
\newcommand{\cov}{\mathrm{cov}}
\newcommand{\real}{\mathbb{R}}
\newcommand{\tran}{\mathsf{T}}
\newcommand{\cf}{\mathcal{F}}
\newcommand{\cy}{\mathcal{Y}}
\newcommand{\giv}{\!\mid\!} 
\newcommand{\err}{\varepsilon}
\newcommand{\dbern}{\mathrm{Bern}}
\newcommand{\dunif}{\mathrm{Unif}}
\newcommand{\paragraphLocal}[1]{%
  
  \par 
  \addvspace{\medskipamount}
  \noindent \textbf{#1\@addpunct{.}}\enspace\ignorespaces
}
\begin{document}

\begin{frontmatter}
\title{A central limit theorem for the Benjamini-Hochberg false discovery proportion under a factor model \thanks{This article will appear in \textit{Bernoulli}. While there are formatting differences, the main text of this arXiv submission should be exactly the same as the version that will appear in \textit{Bernoulli}.}}

\runtitle{A CLT for BH FDP under a factor model}

\begin{aug}
\author[A]{\inits{D. M.}\fnms{Dan M.}~\snm{Kluger}\ead[label=e1]{kluger@stanford.edu}},
\author[A]{\inits{A. B.}\fnms{Art B.}~\snm{Owen}\ead[label=e2]{owen@stanford.edu}}
\address[A]{Department of Statistics,
Stanford University, Stanford, CA, USA \printead[presep={.\ }]{e1,e2}}
\end{aug}

\begin{abstract}
The Benjamini-Hochberg (BH) procedure remains widely popular despite having limited theoretical guarantees in the commonly encountered scenario of correlated test statistics. Of particular concern is the possibility that the method could exhibit bursty behavior, meaning that it might typically yield no false discoveries while occasionally yielding both a large number of false discoveries and a false discovery proportion (FDP) that far exceeds its own well controlled mean. In this paper, we investigate which test statistic correlation structures lead to bursty behavior and which ones lead to well controlled FDPs. To this end, we develop a central limit theorem for the FDP in a multiple testing setup where the test statistic correlations can be either short-range or long-range as well as either weak or strong. The theorem and our simulations from a data-driven factor model suggest that the BH procedure exhibits severe burstiness when the test statistics have many strong, long-range correlations, but does not otherwise.

\end{abstract}

 \begin{keyword}[class=MSC2020] 
 \kwd[Primary ]{62J15} 
 \kwd[; secondary ]{60F17}
 \end{keyword}

\begin{keyword}
\kwd{Empirical cumulative distribution function}
\kwd{functional central limit theorem}
\kwd{functional delta method}
\kwd{multiple hypothesis testing}
\kwd{Simes line}
\end{keyword}

\end{frontmatter}

\section{Introduction}

The Benjamini-Hochberg (BH) procedure is a widely used method for balancing Type I and Type II errors when testing many hypotheses simultaneously. The procedure is designed to control the False Discovery Rate (FDR), which is the \textit{expected value} of the proportion of discoveries that are false (FDP), below a user specified threshold (\cite{BH95}). The procedure was originally shown to guarantee FDR control when the test statistics are assumed to be independent, an assumption unlikely to hold in most application settings. The BH procedure was later proven in \cite{BY01} to control the FDR when there are dependent test statistics satisfying the Positive Regression Dependency (PRDS) property. While PRDS is quite restrictive (for example, it does not hold for two-sided hypothesis tests when the test statistics are correlated or when there are negatively correlated test statistics (\cite{fithianLei2020})), under more general conditions simulation studies have found BH to conservatively control the FDR (\cite{FarcomeniFDRSim}, \cite{KimAndVandeWeilFDRSim}).

While FDR control is important, the motivation for this paper is our concern that FDR control alone can give investigators who use BH false confidence in a low prevalence of false discoveries among their rejected hypothesis. This can happen if the distribution of the FDP has both a wide right tail and a mean that is still below the user specified threshold. As an example, it would be worrisome in the plausible scenario that an investigator is led to believe that roughly 10 percent of their discoveries are false, when in fact, a majority of them are false. To address such concerns, a number of multiple testing procedures have been proposed to control the tail probability that the FDP exceeds a user specified threshold (\cite{KornEtAl}, \cite{Romano_FDP_control}, \cite{RomanoWolf_FDP_control}). \cite{Efron2007} also raised concerns about high variability of FDP due to correlations of the test statistics and proposed an empirical Bayes approach for estimating a dispersion parameter of the test statistics and controlling FDR conditionally on the dispersion parameter. Despite the promise of these methods, the BH procedure remains overwhelmingly popular and the default method of choice for investigators with multiple testing problems. It is therefore important to determine under which conditions are we assured that the distribution of the FDP will be well concentrated about its mean, the FDR, and under which conditions there is a risk that the distribution of the FDP has a wide right tail. Throughout this text we will refer to the former scenario with low variability of the FDP about its mean as the non-bursty regime, and the alarming, latter scenario where occasionally the FDP is much larger than expected as the bursty regime.

While previous simulations in the literature can be used to identify some settings where the BH procedure will exhibit burstiness (see for example, Figure 5 of \cite{frig:kloa:casu:2009} or Figure 1 of \cite{DelattreRoquainWideFig1}), the aim of this paper is to gain a theoretical understanding of when burstiness is a concern for BH.  We identify dependency structures among test statistics that in conjunction with certain proportions of nonnulls make BH prone to delivering bursts of false discoveries. We find other settings where such bursts must be rare. Our results are asymptotic and hold in a two-group mixture model previously studied by \cite{GenoveseWasserman}, \cite{DR16} and \cite{Izmirlian2020}. In that model, independent $\dbern(\pi_1)$ variables define which hypotheses are nonnull, the null $p$-values have the $\dunif(0,1)$ distribution and the nonnull $p$-values have some other distribution in common. 

The BH procedure and the asymptotic distribution of the FDP is well studied for the setting where the test statistics are independent.  \cite{FinnerAndRoters_withnonulls,FinnerAndRoters} study properties of the number of false discoveries under independence both when there are no nonnulls and when the nonnull $p$-values are always $0$. The limiting distribution of the FDP was studied by \cite{GenoveseWasserman} under independence of the test statistics; however, their asymptotic FDP results are derived for the ``plug-in" method (\cite{BH2000_plugin}) rather than the standard BH procedure. To our knowledge, a CLT for the FDP of the BH procedure itself was first explicitly stated in \cite{Neuvial2008}, which uses a functional delta method argument. \cite{Izmirlian2020}, using a CLT for a randomly stopped process, provides a simpler proof of a CLT for the FDP and corrects an error in Neuvial's asymptotic variance formula. These works show that in the two group mixture model with Bernoulli parameter $\pi_1$ and with $m$ hypotheses to test, when using the BH procedure at FDR control level $q$, $\sqrt{m}\big(\FDP - (1-\pi_1) q\big)$ converges as $m \rightarrow \infty$ to a centered Gaussian with variance that depends on $q$, $\pi_1$, and the common nonnull $p$-value distribution.

There are fewer results on the limiting distribution of the FDP for dependent test statistics.  
\cite{FarcomeniDependence} derives a CLT for the FDP of the plug-in procedure when the $p$-values are stationary and satisfy some mixing conditions but for brevity omits explicitly stated FDP CLTs for the standard BH procedure. Using the proof methodology of \cite{Neuvial2008}, \cite{DR11} derive a CLT for the FDP of the BH procedure for one sided testing, when the test statistics follow an equicorrelated Gaussian model, with correlation parameter $\rho\to0$ as the number of tests $m\to\infty$. 
\cite{DR16} extend this result to settings where the Gaussian test statistics follow arbitrary dependence structures but the average pairwise correlation of the test statistics, and the average 2nd and 4th powers of the pairwise correlations of the test statistics satisfy some constraints.

In \cite{DR16}, CLTs for the FDP are derived under two distinct regimes. In their first regime, the average pairwise correlation among test statistics is strictly greater than $\mathcal{O}(1/m)$ for $m$ tests. Under this regime, the FDP is not $\sqrt{m}$-consistent for the product of the FDR control parameter with the limiting proportion of nulls, 
and the FDP only converges to a Gaussian with scale factors much smaller than $\sqrt{m}$. Their other regime considered has an average correlation among test statistics that is at most $\mathcal{O}(1/m)$. For this regime \cite{DR16} derive a CLT for the FDP with $\sqrt{m}$ scaling, but they require a restrictive assumption which they call ``vanishing-second order", precluding settings where there are short-range correlations of constant order. Examples of test statistic correlation matrices to which \cite{DR16} will not apply include tridiagonal Toeplitz correlation matrices as well as block correlation matrices of constant block size, both of which are simple models of interest for studying multiple testing under dependence.

With the aim of identifying dependency structures for which the investigator should be concerned about bursty behavior of the BH procedure, in this paper, we introduce a model that allows for a combination of long-range and potentially strong dependence among the test statistics via a factor model, along with additional strongly-mixing noise that has rapidly decaying long-range dependence. The model also allows for the proportion of nonnulls among the $m$ hypothesis tests to vary as a function of $m$. Under some regularity conditions on the factor model and on the noise with rapidly decaying long-range dependence, we prove a CLT for the FDP under more general conditions than prior CLTs. 
We also establish a CLT for the False Positive Ratio (FPR), which is the proportion of false discoveries among all tests conducted. The new CLTs hold conditionally on the realized latent variable of the factor model. Applying these new results, we make the following contributions to the literature of asymptotic results for the BH procedure:

\begin{enumerate}

\item 
CLTs of the FDP for simple models, with short-range and constant-order dependency structures, that were not covered by the results of \cite{DR16}. Examples include block correlation structures with fixed block size and banded correlation structures. These results allow for non-stationary test statistics, so they cannot be inferred from theorems in \cite{FarcomeniDependence} either.
    
\item 
Conditional CLTs in settings where the long-range dependency is modeled by a factor model which includes scenarios not covered in either \cite{FarcomeniDependence} or \cite{DR16}.
    
\item 
CLTs for the FPR rather than just the FDP because the FDP limiting behavior is unilluminating when it converges in probability to 1.
    
\item 
CLTs where the expected proportion of nonnulls varies as the number of test statistics grows, allowing for a sparse allocation of nonnulls.
    
\item 
A discussion of the dependency regimes under which the investigator should be concerned about BH having bursty behavior, such as the setting where the number of nonnulls is $o_p(\sqrt{m})$, and the dependency structure contains a factor model component.
\end{enumerate}

 To qualify point 2 above, we note that \cite{DR16} include some CLTs for the $\FDP$ under long-range dependency that our theorems do not.  Ours all have a $\sqrt{m}$ scaling. They include some with a slower than $\sqrt{m}$ scaling. For instance, they get such a CLT under an equicorrelated Gaussian model with correlation $\rho\to0$ but $\sqrt{m}\rho\to\infty$. To clarify point 5 above, we characterize what causes alarming burstiness of the BH FDP when the test statistics are dependent, which to our knowledge has not been analyzed theoretically before. 
 A separate issue is the pathologically low power of BH when the FDR control level $q$ is below a critical threshold \citep{ChiCriticallity}.
This issue was studied in greater generality by \cite{ZhangFanYu_LowPower} using the framework of \cite{StoreyTaylorSiegmund2004}.

The proof of our main theorem builds upon the proof structure seen in \cite{Neuvial2008} and \cite{DR16}. As is done in those works, we derive a functional CLT (FCLT) for the empirical cumulative distribution functions (ECDFs) of the null and nonnull $p$-values, compute the Hadamard derivative of the FDP written as a functional of the two ECDFs, and apply the functional delta method to obtain a CLT for the FDP. While the proofs of previous FDP CLTs in the literature require establishing an FCLT for the $p$-value ECDFs defined on $[0,1]$, in Section \ref{sec:Define_ab_subsection}, we define a focal interval $[a,b] \subset (0,1)$, and our proof demonstrates that merely an FCLT for the ECDFs restricted to $[a,b]$ is needed. Our use of a focal interval allows us to obtain a CLT for the FDP in new settings where the null and nonnull $p$-value ECDFs are poorly behaved asymptotically in either $[0,a)$ or $(b,1]$. To obtain an FCLT when restricting our attention to $[a,b]$, we use an FCLT from \cite{AndrewsAndPollard} for bounded function classes. The regularity conditions in \cite{AndrewsAndPollard} are conducive to obtaining an FCLT of the $p$-value ECDFs in settings where the test statistics follow block or banded correlation structures (see point 1 above). Therefore, in addition to the contributions enumerated above, our use of a focal interval and our use of an FCLT from \cite{AndrewsAndPollard} are contributions to proof methodology for BH asymptotics.

Our results suggest that approaches which estimate and remove the factor model components from the test statistics prior to applying BH can alleviate burstiness issues. A number of such approaches for estimating and removing factor model components in multiple testing settings have been proposed and have shown promise in simulations \citep{frig:kloa:casu:2009,sun2012multiple,FanHanGu,wang:zhao:hast:owen:2017,FanFarmtest}. Our CLT for the BH method itself can be a useful step towards deriving CLTs for methods that first estimate and remove factor model components and subsequently apply BH.

Figure~\ref{fig:Intro_burst4examples} shows simulations of the FDP under some models that we study in this paper.  In each case, there are $25{,}000$ Monte Carlo simulations. The BH procedure is used with $q=0.1$ on test statistics that are $\dnorm(0,1)$ for null hypotheses and $\dnorm(2,1)$ for alternative hypotheses. Each hypothesis is independently null with probability $0.9$ and nonnull otherwise. The models differ in the correlation among test statistics. For the first histogram, $m=22{,}283$ test statistics were sampled with correlations based on a $3$-factor model fit to some Duchenne Muscular Dystrophy data described in Section~\ref{sec:DataDrivenExample}. The second histogram is for the same correlation matrix after dividing the off-diagonal entries by $10$. Next are two block correlation models with blocks of size $100$ and within-block correlations of $0.5$ or $0.05$. To keep $m=22{,}283$ one of the blocks had only $83$ test statistics in it. In all four settings the FDR is seen to be controlled below $0.1$, as desired. The positive False Discovery Rate (pFDR), defined as the expected value of the FDP conditional on there being at least one rejection, does not exceed $0.1$ in these simulations either. Of the four histograms, 
the one with data-driven correlations
 shows a long tailed distribution for FDP that we consider extremely bursty, the two with synthetic block correlations show FDPs that are typically quite close to the target FDR of $q=0.1$, and the one with downscaled data-driven correlations is intermediate. 
\begin{figure}[t]
    \centering
            \begin{subfigure}[b]{190pt}
            \centering
            \includegraphics[width=190pt]{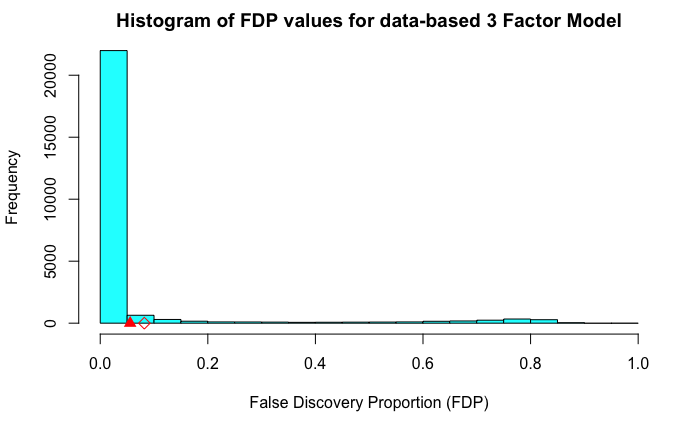}
        \end{subfigure}
        \hfill
        \begin{subfigure}[b]{190pt}
            \centering
            \includegraphics[width=190pt]{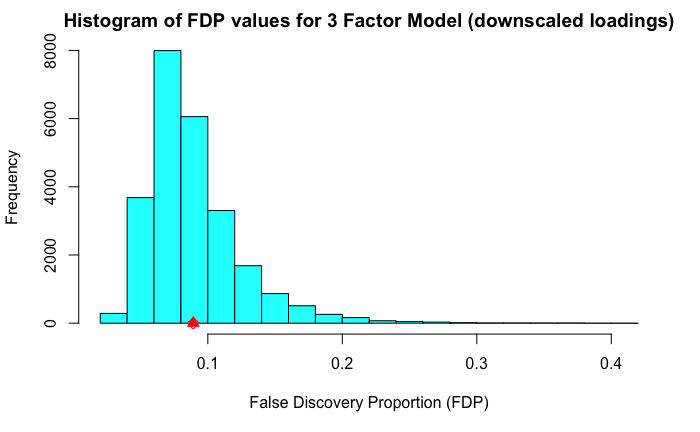}
        \end{subfigure}
         \vskip 1pt 
            \begin{subfigure}[b]{190pt}
            \centering
            \includegraphics[width=190pt]{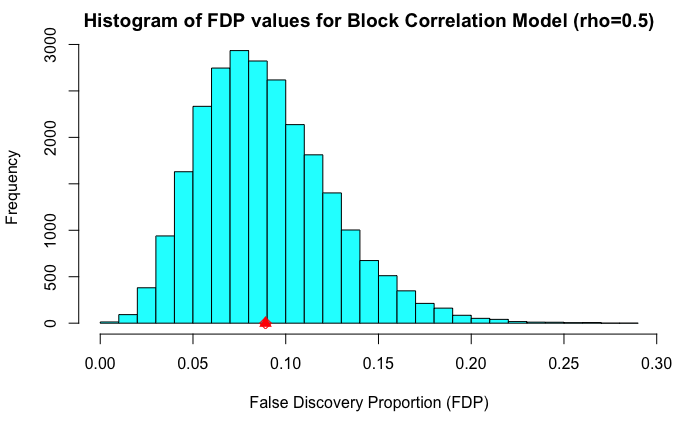}
          \end{subfigure}
        \hfill
        \begin{subfigure}[b]{190pt}
            \centering
            \includegraphics[width=190pt]{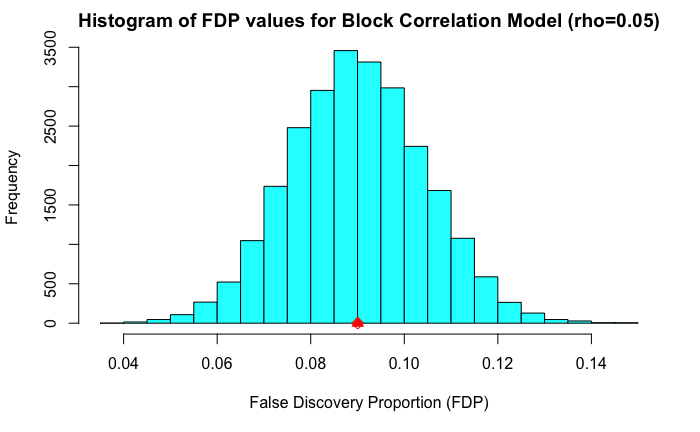}
        \end{subfigure}
          \caption[]
        {\label{fig:Intro_burst4examples}
        These are histograms of the false discovery proportion in $25{,}000$ simulations.  The data come from the two-group mixture model as described in the text.  Each histogram's mean is marked with a triangle and each histogram's mean amongst nonzero $\text{FDP}$ values is marked with a hollow diamond, which estimate the $\text{FDR}$ and $\text{pFDR}$ respectively. For three of the histograms, the triangle and diamond are close enough to overlap. The target FDR control is $q=0.1$.
        } 
\end{figure}

The organization of this paper is as follows. In Section \ref{sec:setup}, we describe our multiple testing setup and our model to account for both long-range correlations and short-range correlations among the test statistics. We also introduce our notation, definitions and the conditions
under which our results hold. In Section \ref{sec:StatmentOfTheorems}, we state our most general CLTs for the FDP and FPR of the BH procedure. These hold conditionally on the common latent factors in our model. The proofs of these theorems are provided in the \hyperref[sec:supp]{supplemental material}. In Section \ref{sec:no_factor_model}, we exploit these theorems to obtain FDP CLTs (that are not conditional on a latent factor) for settings where the long-range dependence is rapidly decaying and no factor model component is needed to account for long-range dependencies. In Section \ref{sec:Implications_under_factor_model}, we exploit these theorems to obtain FDP and FPR CLTs conditional on the latent factor that give insight into the burstiness of the BH procedure, and we show that the burstiness of the BH procedure is particularly alarming when the test statistics follow a factor model and the number of nonnulls is sparse (for example, if the number of nonnulls is $o_p(\sqrt{m})$ where $m$ is the number of hypotheses tested). In Section \ref{sec:DataDrivenExample}, we describe the Duchenne Muscular Dystrophy dataset and the 3-factor model that was fit to it, and we show simulations based on the fitted factor model. In Section \ref{sec:Discussion}, we discuss these results and their implications for multiple testing.

\section{Setup and definitions}\label{sec:setup}

In this section we introduce our notation for the two-group mixture model. Our version relies on a factor analysis model that we also introduce. We also review the BH procedure and state our regularity conditions in this section.

\subsection{Two-group mixture model with factors}\label{sec:MultipleTestingSetup}

Our setting has $m$ hypothesis tests indexed by $i=1,\dots,m$ and our asymptotics let $m\to\infty$. In a two-group mixture model, the marginal distribution of each of the $m$ $p$-values is a mixture of a common null distribution and a common nonnull distribution, both of which do not depend on $i$. For $1\le i\le m<\infty$, let $H_{mi}\in\{0,1\}$ be an indicator variable with $H_{mi}=1$ if and only if hypothesis $i$ of $m$ is nonnull. We take $H_{m1},\dots,H_{mm}\stackrel{\mathrm{iid}}{\sim} \dbern( \pi_1^{(m)})$ for $\pi_1^{(m)}\in(0,1)$.  

Our two-group mixture model is thus based on a sequence of nonnull probabilities, and letting $\pi_1^{(m)}\to0$ will let us model sparsity of nonnull hypotheses. For instance, with $\pi_1^{(m)}=\lambda/m$, the number of nonnulls has constant expectation $\lambda$ and has an asymptotic Poisson distribution.

To focus on dependency among tests it is convenient to assume Gaussian test statistics $X_{mi}$ for $1\le i\le m<\infty$. In our two-group mixture model, the test statistics where the null holds have mean zero and the ones where the alternative hypothesis holds have common mean $\mu_A>0$. 
We assume that $X_{mi}=\mu_A H_{mi} +Z_{mi}$ where $(Z_{m1},\dots,Z_{mm})$ is multivariate Gaussian, and we induce dependence among our $p$-values by introducing correlations among the $Z_{mi}$. 
We briefly remark that the common nonnull mean assumption is not necessary for our theoretical results to hold, but is made for cleaner exposition, as our primary interest is in investigating how the dependency between the test statistics can drive bursty behavior.

We study two kinds of dependence operating simultaneously.  One is an $\alpha$-mixing dependence that decays rapidly as the distance between hypothesis indices $i$ increases. This model captures some of the dependence one expects from hypotheses corresponding to a linearly ordered variable such as the position of a single nucleotide polymorphism (SNP) along the genome.

The other form of dependence we include is a factor model. Uses of factor models in multiple hypothesis testing include \cite{frig:kloa:casu:2009}, \cite{luca:kung:chi:2010},
 \cite{sun2012multiple} and
 \cite{gera:step:2020}. A factor model can capture the dependency structure commonly seen among the test statistics in multiple testing problems involving gene expression data because it can capture important aspects of the correlation matrix of gene expression measurements. For example, \cite{owen:2005} gives conditions where the correlation matrix for $m$ test statistics measuring association of a single phenotype with expression levels of $m$ genes is actually equal to the correlation matrix of the sampled gene measurements.

We construct the $k$-factor model for the array $\{ Z_{mi} \such 1 \leq i \leq m < \infty\}$ as follows. We assume that the number of factors $k$ remains fixed as $m \to \infty$, as is assumed in \cite{FanFarmtest}, among others. We let $\bswu \sim \dnorm(0,I_k)$ be the latent factor, which we suppose is only drawn once and does not change as $m \rightarrow \infty$. We let $\{\bm{L}_{mi} \such 1 \leq i \leq m < \infty \}$ be a triangular array of fixed `loading' vectors in $\real^k$. The factor model component of $Z_{mi}$ is $\bm{L}_{mi}^\tran \bswu$. 
For our $\alpha$-mixing model with possibly strong short-range correlations but rapidly diminishing long-range correlations, we let $\{\Sigma^{(m)} \}_{m=1}^{\infty}$ be a sequence of covariance matrices and for each $m$, we let $(\err_{m1},\dots,\err_{mm} ) \sim \dnorm(0,\Sigma^{(m)})$ independently of $\bswu$. To combine both dependency structures, we let $Z_{mi} = \bm{L}_{mi}^\tran \bswu +\err_{mi}$, giving our correlation structure for the array $\{ Z_{mi} \such 1 \leq i \leq m < \infty\}$.

We suppose that all of the test statistics have the same variance and without loss of generality, we take this common variance to be one. We do not assume the factor model to have a perfect fit, and assume instead that $\Vert \bm{L}_{mi} \Vert_2^2+ \Sigma_{ii}^{(m)}=1$ where $\Sigma_{ii}^{(m)} >0$ for all $i,m$. Because $Z_{mi} = \bm{L}_{mi}^\tran \bswu +\err_{mi}$, these assumptions give $Z_{m1},\dots,Z_{mm} \sim \dnorm(0,1)$ along with the two kinds of dependency discussed above.

We let $\varphi$ and $\Phi$ denote the probability density function (PDF) and the cumulative distribution function (CDF), respectively, of $\dnorm(0,1)$ and we let $\bar{\Phi}=1-\Phi$ be the complementary CDF. 
Then our $p$-values for one-sided hypothesis tests are \begin{align}\label{eq:defpmi}
P_{mi}=\bar{\Phi}(X_{mi})=\bar{\Phi}\bigl(\mu_A H_{mi}+\bm{L}_{mi}^\tran \bswu+\err_{mi}\bigr)
\end{align}
for $1\le i \leq m<\infty$, and so $P_{mi}\sim \dunif(0,1)$ for the true null hypotheses. Fixing $q \in (0,1)$, throughout the text we will let $\tauBHm$, $V_m$, $\FDP_m$, and $\FPR_m$ denote the rejection threshold, the number of false discoveries, the FDP, and the FPR respectively when applying the Benjamini-Hochberg procedure at level $q$ to the $p$-values $(P_{m1},\dots,P_{mm})$. The formulas for these quantities are given explicitly in the next subsection, where we review the BH procedure. In our main theorems, we state CLTs for the quantities $\FDP_m$ and $\FPR_m$ conditionally on the value of the latent factor $\bswu=\bswl \in \real^k$.

\subsection{The BH procedure}

In this subsection, we describe how the BH procedure is conducted at level $q$ on $m$ tests with $p$-values $(P_{m1}, \dots, P_{mm})$. First take the sorted $p$-values $P_{m(1)} \leq \dots \leq P_{m(m)}$ and set $P_{m(0)}=0$. The number of rejected hypothesis will be given by \begin{equation}
    R_m \equiv \max \bigl\{j \such P_{m(j)} \leq \frac{jq}{m},\ j \in \{0,1, \dots, m\} \bigr\}.
\end{equation} 

The BH procedure rejects the hypotheses that correspond to the $R_m$ smallest $p$-values: that is it will reject all hypothesis $i$ for which $P_{mi} \leq P_{m(R_m)}\equiv \tauBHm$. As noted in \cite{Neuvial2008}, $\tauBHm$ can equivalently be defined as the largest $t \in [0,1]$ at which the empirical CDF (ECDF) of the $p$-values is at least as large as $t/q$. We leverage this equivalence in our theorem proofs.

Letting $H_{m1}, \dots, H_{mm}$ be as defined in Section \ref{sec:MultipleTestingSetup}, the number of false discoveries 
is 
\begin{equation}
    V_m \equiv \sum_{i=1}^m I \{ P_{mi} \leq \tauBHm, H_{mi}=0 \}.
\end{equation} Then $\FPR_m \equiv V_m/m$ 
and $\FDP_m \equiv V_m/\max\{R_m, 1\}$.
\subsection{Definitions and conditions}\label{sec:def_and_conditions}

Here we present some definitions as well as regularity conditions sufficient for our conditional CLTs to hold. All of the definitions, conditions and formulas in this subsection are conditional on a fixed value of the latent factor $\bswu\in\real^k$. 

\subsubsection{Variance and mixing conditions on \texorpdfstring{$\varepsilon$}{epsilon}}\label{sec:mixing_conditions}

Recall from our setup that $\var(\err_{mi})>0$ for all $m$, $i$, allowing us to define $\tilde{\err}_{mi} \equiv \err_{mi}/\sqrt{\var(\err_{mi})}$ for convenience. It is also helpful to introduce the following condition, which forces the variance of all $\err_{mi}$ terms to be bounded away from zero.

\begin{condition}\label{cond:boundedL}
    $S_L \equiv \sup_{1 \leq i \leq m < \infty} \Vert \bm{L}_{mi} \Vert_2^2 < 1$.
\end{condition}

\paragraphLocal{Note about Condition~\ref{cond:boundedL}:} recalling that  $\Vert \bm{L}_{mi} \Vert_2^2+\var(\err_{mi})=1$ in our model, this condition provides a uniform bound 
$\var(\err_{mi})\ge1-S_L>0$ for all $1\le i\le m<\infty$.  

To describe the mixing condition on $(\err_{mi})_{1 \leq i \leq m < \infty}$, for $1\le i\le m<\infty$, define $\xi_{mi} \equiv (\err_{mi},H_{mi})$. Now let $\mathcal{A}_1^n(m)$  be the $\sigma$-field generated by the variables $\xi_{mi}$ for $1 \leq i \leq n$ and $\mathcal{A}_{n+d}^{\infty}(m)$ be the $\sigma$-field generated by the variables $\xi_{mi}$ for $n+d\le i \le m$. For integers $d\ge1$ our $\alpha$-mixing parameters $\alpha(d)\in[0,1]$ are defined by \begin{equation}\label{eq:def_alpha_mixing_coefficient}
    \alpha(d) \equiv \sup_{n,m \in \mathbb{N}} \sup_{\substack{A_0 \in \mathcal{A}_1^n(m) \\ A_1 \in \mathcal{A}_{n+d}^{\infty}(m)}} \big| P(A_0 \cap A_1) - P(A_0) P(A_1) \big|.
\end{equation}

\begin{condition}\label{cond:niceQandgamma} 
There exists an even integer $Q > 2$ and $\gamma>0$ such that both $$\mathrm{(i)}\quad \frac{\gamma}{2+\gamma}+\frac{2}{Q} < 1\quad\text{and}\quad\mathrm{(ii)}\quad \sum_{d=1}^{\infty} d^{Q-2} \alpha(d)^{\frac{\gamma}{Q+\gamma}} < \infty$$ 
\end{condition}  

\paragraphLocal{Notes about Condition~\ref{cond:niceQandgamma}:} Throughout the text we will let $Q$, $\gamma$ be such numbers. Note that it is possible that this condition can be loosened to allow $Q$ to be rational, but then we need to trust a claim in Andrews and Pollard (1994) that their Theorem 2.2 would still hold for $Q$ not an even integer.  

Condition~\ref{cond:niceQandgamma} will hold when the correlation between $\err_{mi}$ and $\err_{mj}$ is a rapidly decaying function of $\vert i -j \vert$.  If this correlation is always zero for each $|i-j|>M$ (making the error sequences $(\err_{mi})_{1 \leq i \leq m}$ $M$-dependent for each $m$), Condition~\ref{cond:niceQandgamma} will hold. In the following remark, we argue that Condition~\ref{cond:niceQandgamma} will typically hold when $(\tilde{\err}_{mi})_{1 \leq i \leq m}$ is modelled by a stationary ARMA process or by a stationary GARCH process (a definition of these processes can be found in \cite{ARMA_GARCH_texbook}, for example). \newline

\begin{remark}\label{remark:stationaryARMA_fast_enough}
    Suppose that the standardized errors $\tilde{\err}_{mi}$ just depend on $i$ and not on $m$. If $(\tilde{\err}_{i})_{i \in \mathbb{Z}}$ can be modelled by a stationary ARMA model with absolutely continuous errors with respect to Lebesgue measure on $\real$, then Condition~\ref{cond:niceQandgamma} will hold. To see this, note that by Theorem 1 in \cite{MokkademARMA}, such a stationary ARMA process $(\tilde{\err}_{i})_{i \in \mathbb{Z}}$ will be geometrically completely regular and hence the $\alpha$-mixing coefficients of $(\tilde{\err}_i)_{i \in \mathbb{Z}}$ will be $\mathcal{O}(\theta^d)$ for some $\theta \in (0,1)$. By independence of the $H_{mi}$ and since $\var(\err_{i})>0$, this implies that $\alpha(d)=\mathcal{O}(\theta^d)$ will hold for that same $\theta \in (0,1)$, further implying that Condition \ref{cond:niceQandgamma} will hold. By similar reasoning, if $(\tilde{\err}_{i})_{i \in \mathbb{Z}}$ is modeled by a stationary GARCH process, Theorem 8 in \cite{Lindner2009} implies that under certain conditions on the GARCH process errors, Condition~\ref{cond:niceQandgamma} will hold.
\end{remark}

\subsubsection{Definitions of some subdistributions of \texorpdfstring{$p$}{p}-values and their condition}

For any positive integer $m$, define $\pi_0^{(m)} \equiv 1-\pi_1^{(m)}$, and then write 
$$H_{mi0} \equiv 1-H_{mi} \quad \text{and} \quad H_{mi1} \equiv H_{mi}.$$
Our subsequent definitions use $r=0$ for quantities based on the null hypotheses and $r=1$ for quantities from the nonnull hypotheses.
For $t \in [0,1]$ and $r \in \{0,1 \}$ let $$\hat{F}_{m,r}(t) \equiv \frac{1}{m} \sum_{i=1}^m H_{mir} I \{ P_{mi} \leq t \}= \frac{1}{m} \sum_{i=1}^m H_{mir} I \{ \bar{\Phi}(\mu_A r + \err_{mi}+\bm{L}_{mi}^\tran  \bswu) \leq t \}.$$ 
We call $\hat{F}_{m,0}$ and $\hat{F}_{m,1}$ the empirical subdistribution functions of the null and nonnull $p$-values respectively. These empirical subdistribution functions sum to the ECDF of the $p$-values. Let $\gamma_{mir}: [0,1] \rightarrow [0,1]$ be the monotone increasing bijection given by
$$\gamma_{mir}(t) \equiv \Pr(P_{mi} \leq t \giv H_{mi}=r,\bswu=\bswl) = \bar{\Phi} \Big( \frac{\bar{\Phi}^{-1}(t)- \mu_A r -\bm{L}_{mi}^\tran \bswl  }{\sqrt{1- \Vert \bm{L}_{mi} \Vert_2^2 }} \Big).$$ We aggregate $\gamma_{mir}$ in the following subdistribution functions
$$F_{m,r}(t) \equiv \e\big( \hat{F}_{m,r}(t) \giv \bswu =\bswl \big)  = \frac{\pi_r^{(m)}}{m} \sum_{i=1}^m \gamma_{mir}(t )$$ and then let
\begin{align}\label{eq:defFr}
F_r(t)  \equiv  \lim_{m \rightarrow \infty} F_{m,r}(t) =  \lim_{m \rightarrow \infty} \frac{\pi_r^{(m)}}{m} \sum_{i=1}^m \gamma_{mir}(t).
\end{align}
Condition~\ref{cond:F0F1_defined} ensures that these quantities are well defined.  

\begin{condition}\label{cond:F0F1_defined}
    For all $t \in [0,1]$ and $r \in \{0,1 \}$, $F_r(t) \equiv \lim_{m \rightarrow \infty} F_{m,r}(t)$ exists. 
\end{condition} 

\subsubsection{Defining the asymptotic ECDF and the Simes point}

Now define $\hat{G}_m, G: [0,1] \rightarrow [0,1]$ via
\begin{align}\label{eq:defghatandg}
\hat{G}_m(t) \equiv \hat{F}_{m,0}(t)+\hat{F}_{m,1}(t) \quad\text{and}\quad G(t) \equiv F_0(t)+F_1(t )
\end{align}
for $t \in [0,1]$. Note that $\hat{G}_m$ is the ECDF of the $p$-values and $G$ is the limiting expected ECDF of the $p$-values. Throughout the text we will refer to $G$ as the \textit{asymptotic ECDF} because under most dependency structures, we expect $G$ to be the point-wise limit in probability of $\hat{G}_m$.

The rejection threshold for the BH procedure at level $q$ is the largest point $t$ such that the ECDF of the $p$-values evaluated at $t$ lies above the line through the origin of slope $1/q$, called the Simes line. 
It is reasonable to expect the limiting $p$-value rejection threshold for the BH procedure at level $q$ to be the largest $t$ at which $(t,G(t))$ intersects the Simes line. 
We use the term \textit{Simes point} to describe the largest point where the asymptotic ECDF intersects the Simes line. More precisely, the Simes point is
\begin{equation}\label{eq:Simes_point_def}
    \tau_* \equiv \sup \bigl\{ t \in (0,1) \such G(t) \geq t/q \bigr\},
\end{equation}
interpreting the supremum of the empty set to be zero.
The Simes point satisfies $0\leq\tau_*\leq q$. The upper limit follows from $G(t)\leq1$. Both $G$ and $\tau_*$ depend on the specific realization of latent factor $\bswu \in \real^k$ on which we condition.

\begin{figure}[t]
\centering
\includegraphics[width=0.95 \hsize]{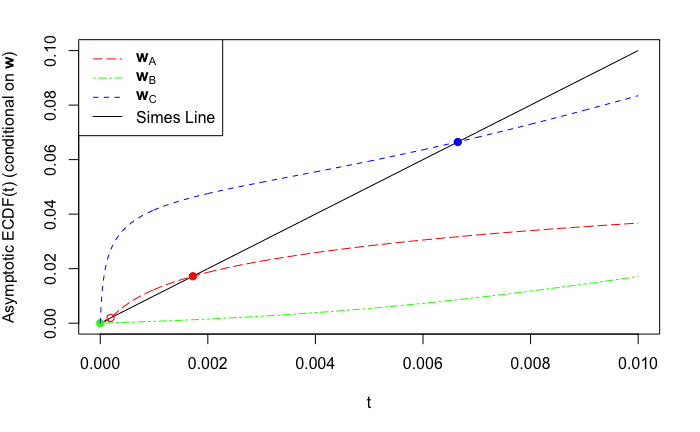}
\caption{\label{fig:SimesLinePoint_example} 
The curves are conditional asymptotic ECDFs of $p$-values in a 3-factor model based on some Duchenne Muscular Dystrophy data described in Section \ref{sec:DataDrivenExample}.
The three draws satisfy $\Phi(\bswl_A)=(0.8,0.4,0.9)$,
$\Phi(\bswl_B)=(0.45,0.56,0.62)$
and $\Phi(\bswl_C)=(0.02,0.85,0.78)$.
Filled circles show the Simes points.
An open circle for $\bswl_A$ shows a crossing of the Simes line that is not the Simes point because it is not the final crossing.  }
\end{figure}

Figure~\ref{fig:SimesLinePoint_example} illustrates the Simes points.  The setting has $\mu_A=2$, $\pi_0=0.9$ and $q=0.1$.  The horizontal axis has putative $p$-values over the range $t\in[0,0.01]$. The Simes line is $t/q$. There are $m=22{,}283$ hypotheses corresponding to genes in the GDS 3027 Duchenne Muscular Dystrophy data described in Section~\ref{sec:DataDrivenExample}. For three draws $\bswu\sim\dnorm(0,I_3)$ we show the asymptotic ECDF curves. One of them crosses the Simes line twice and the Simes point is the last crossing. One crosses it only once and one has Simes point $\tau_*=0$ because the Simes line is never crossed.

We will need continuity of 
$G(\cdot)$ on $(0,1)$ under Conditions~\ref{cond:boundedL} and \ref{cond:F0F1_defined}.
We do not know whether $G$ must be continuous at $0$ or $1$, but our results do not depend on that.

\begin{proposition}\label{prop:G_is_continuous}
    Under Conditions~\ref{cond:boundedL} and \ref{cond:F0F1_defined}, $G$ is continuous on $(0,1)$.
\end{proposition}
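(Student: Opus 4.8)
The plan is to prove the stronger statement that $F_0$ and $F_1$ are locally Lipschitz on $(0,1)$; continuity of $G=F_0+F_1$ there is then immediate. The idea is that the pre-limit functions $\gamma_{mir}$ form a family that is \emph{uniformly} Lipschitz on each compact subinterval of $(0,1)$, and that this bound descends to the pointwise limits $F_0,F_1$.

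First I would fix $0<a<b<1$ and record two facts. (i) Since $\bar\Phi^{-1}$ is $C^1$ on $(0,1)$ with $(\bar\Phi^{-1})'(t)=-1/\varphi(\bar\Phi^{-1}(t))$ and $\bar\Phi^{-1}([a,b])$ is compact, $\varphi\circ\bar\Phi^{-1}$ is bounded below by a positive constant on $[a,b]$, so $\bar\Phi^{-1}$ is $M_{a,b}$-Lipschitz on $[a,b]$ for some finite $M_{a,b}$. (ii) By Condition~\ref{cond:boundedL}, $\Vert\bm{L}_{mi}\Vert_2^2\le S_L<1$ for all $i,m$, hence $c_{mi}\equiv 1/\sqrt{1-\Vert\bm{L}_{mi}\Vert_2^2}\le 1/\sqrt{1-S_L}$. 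Writing $\gamma_{mir}(t)=\bar\Phi\bigl(c_{mi}(\bar\Phi^{-1}(t)-\mu_A r-\bm{L}_{mi}^\tran\bswl)\bigr)$ and using that $\bar\Phi$ is $(2\pi)^{-1/2}$-Lipschitz on $\real$ while translation by $\mu_A r+\bm{L}_{mi}^\tran\bswl$ is an isometry, composition of Lipschitz maps gives, for all $s,t\in[a,b]$,
$$|\gamma_{mir}(s)-\gamma_{mir}(t)|\ \le\ \frac{M_{a,b}}{\sqrt{2\pi(1-S_L)}}\,|s-t|\ \equiv\ C_{a,b}\,|s-t|,$$
with $C_{a,b}$ independent of $m$, $i$ and $r$ (note that the loading inner product enters only through the translation, so no bound on $\bm{L}_{mi}^\tran\bswl$ itself is needed).

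Next I would observe that $F_{m,r}(t)=\pi_r^{(m)}\cdot m^{-1}\sum_{i=1}^m\gamma_{mir}(t)$ is $\pi_r^{(m)}\le 1$ times an average of the $\gamma_{mir}$, hence is itself $C_{a,b}$-Lipschitz on $[a,b]$; then I would pass to the limit using Condition~\ref{cond:F0F1_defined}: for $s,t\in[a,b]$, $|F_r(s)-F_r(t)|=\lim_{m\to\infty}|F_{m,r}(s)-F_{m,r}(t)|\le C_{a,b}|s-t|$. Thus $F_r$ is Lipschitz, in particular continuous, on $[a,b]$; since $[a,b]$ was an arbitrary compact subinterval of $(0,1)$, $F_0,F_1$ and therefore $G$ are continuous on $(0,1)$.

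There is no serious obstacle, but two points deserve care. The argument is deliberately confined to compact subintervals of $(0,1)$ because $M_{a,b}$ (equivalently $1/\varphi(\bar\Phi^{-1}(t))$) diverges as $t\to0^+$ or $t\to1^-$, which is exactly why the proposition asserts continuity only on the open interval. And Condition~\ref{cond:boundedL} is what makes the Lipschitz constant uniform over the triangular array: without $\Vert\bm{L}_{mi}\Vert_2^2$ bounded away from $1$ the factor $c_{mi}$ could blow up and the pointwise limit need not inherit any modulus of continuity.
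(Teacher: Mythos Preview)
Your proof is correct and follows essentially the same approach as the paper's: both establish that the $\gamma_{mir}$ are Lipschitz on each compact subinterval with a constant uniform in $m,i,r$ (using Condition~\ref{cond:boundedL} to bound $1/\sqrt{1-\Vert\bm{L}_{mi}\Vert_2^2}$), then average and pass to the limit via Condition~\ref{cond:F0F1_defined}. The only cosmetic difference is that the paper computes $\gamma_{mir}'(t)$ explicitly and bounds it, whereas you decompose $\gamma_{mir}$ as a composition of Lipschitz maps; the resulting constant is the same.
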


\begin{proof}
    It is sufficient to show that $G$ is Lipschitz continuous on $(\epsilon,1-\epsilon)$ whenever $0<\epsilon<1/2$. For any such $\epsilon$, observe that for $r \in \{0,1 \}$ and integers $1\leq i\leq m<\infty$
    \begin{equation*}
    \gamma_{mir}'(t)=\frac{1}{\varphi(\Phi^{-1}(t)) \sqrt{1- \Vert \bm{L}_{mi} \Vert_2^2}}\,  \varphi \bigg( \frac{\bar{\Phi}^{-1}(t) -\bm{L}_{mi}^\tran \bswl -\mu_Ar  }{\sqrt{1- \Vert \bm{L}_{mi} \Vert_2^2 }} \bigg).
    \end{equation*} 
    Now $\varphi(\cdot)\le1/\sqrt{2\pi}$ and then using
    Condition~\ref{cond:boundedL} 
    it follows that for any $t \in (\epsilon,1-\epsilon)$, $$\vert \gamma_{mir}'(t) \vert \leq \frac{1/\sqrt{2\pi}}{\varphi(\Phi^{-1}(t)) \sqrt{1- S_L}} \leq \frac{1/\sqrt{2\pi}}{\varphi(\Phi^{-1}(\epsilon))\sqrt{1-S_L}} \equiv C_{\epsilon}.$$ 
    Since $\sup_{t \in (\epsilon,1-\epsilon)} \vert \gamma_{mir}'(t) \vert \leq C_{\epsilon} < \infty$, $\vert \gamma_{mir}(t)-\gamma_{mir}(s) \vert \leq C_{\epsilon} \vert t-s\vert$ for any $t,s \in (\epsilon,1-\epsilon)$. 
    This argument holds for any $1 \leq i \leq m < \infty$ and $r \in \{0,1\}$, and so for any $t,s \in (\epsilon,1-\epsilon)$ and integer $m$ and $r \in \{0,1 \}$, \begin{equation*}
        \vert F_{m,r}(t)- F_{m,r}(s) \vert \leq \frac{\pi_r^{(m)}}{m} \sum_{i=1}^m \vert \gamma_{mir}(t)-\gamma_{mir}(s) \vert  \leq C_{\epsilon} \vert t-s\vert.
    \end{equation*} Taking the limit as $m \rightarrow \infty$ of the left side of the above inequality, which exists by Condition~\ref{cond:F0F1_defined}, we get $\vert F_{r}(t)- F_{r}(s) \vert \leq C_{\epsilon} \vert t-s\vert$ for all $t,s \in (\epsilon,1-\epsilon)$ and for both $r \in \{0,1 \}$. Thus, $F_r$ is Lipschitz continuous on $(\epsilon,1-\epsilon)$ for $r \in \{0,1\}$ which implies $G=F_0+F_1$ is Lipschitz continuous on $(\epsilon,1-\epsilon)$.
\end{proof}
 
\subsubsection{Defining a focal interval \texorpdfstring{$[a,b]\subset[0,1]$}{[a,b]} for our processes}\label{sec:Define_ab_subsection}

We are going to work with an interval $[a,b]$ of positive length for which the Simes point $\tau_*$ is the unique element $t\in(a,b)$ with $G(t)=t/q$. First we need a technical condition to rule out some pathological behavior. Under this condition there will exist such an interval $[a,b]$.
 
\begin{condition}\label{cond:noaccum}
The Simes point is positive, is the largest point where $G$ actually crosses the Simes line, and is not an accumulation point for points of intersection of $G$ and the Simes line. 
That is,
    \begin{enumerate} 
        \item[(i)] $\tau_*>0$,
        \item[(ii)] $\tau_*=\sup \bigl\{ t \in (0,1) \such G(t) > t/q \bigr\}$, and
        \item[(iii)]  $\tau_*$ is not an accumulation point of $\bigl\{ t \in (0,1) \such G(t ) = t/q \bigr\}$. 
    \end{enumerate}
\end{condition}

\paragraphLocal{Note about Condition~\ref{cond:noaccum}:}         
For many factor model choices, Condition~\ref{cond:noaccum} will hold with some probability in $(0,1)$ depending on the specific realization of $\bswu \sim \dnorm(0,I_k)$. This is due to the dependence of $\tau_*$ and $G$ on the specific realization of the latent factor $\bswu \in \real^k$ on which we condition. For example, see Figure \ref{fig:SimesLinePoint_example}.

\begin{proposition}\label{prop:existance_ab}

If Conditions \ref{cond:boundedL}, \ref{cond:F0F1_defined}, and \ref{cond:noaccum} hold, then for any $b\in(q,1)$ there exists a point $a \in (0,q)$ such that

\begin{enumerate}
\item[(i)] $G(a) > a/q$, and
\item[(ii)]  the Simes point $\tau_*$ is the unique $t \in (a,b)$ solving $G(t) = t/q$.
\end{enumerate}
\end{proposition}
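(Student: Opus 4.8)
The plan is to take $a$ slightly to the left of the Simes point $\tau_*$, using Condition~\ref{cond:noaccum} to ensure that $\tau_*$ is an \emph{isolated} crossing of $G$ with the Simes line, so that on a small left neighborhood of $\tau_*$ the function $t\mapsto G(t)-t/q$ is sign-definite. Throughout I would use that $G$ is continuous on $(0,1)$ (Proposition~\ref{prop:G_is_continuous}), that $0<\tau_*\le q$, and the defining property $\tau_*=\sup\{t\in(0,1):G(t)\ge t/q\}$, which in particular forces $G(t)<t/q$ for every $t\in(\tau_*,1)$.

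First I would check that $G(\tau_*)=\tau_*/q$: taking a sequence from $\{t\in(0,1):G(t)\ge t/q\}$ converging up to $\tau_*$ and using continuity yields the inequality ``$\ge$'', while letting $t\downarrow\tau_*$ in $G(t)<t/q$ yields ``$\le$''. Next, by Condition~\ref{cond:noaccum}(iii) I can pick $\delta>0$ with $(\tau_*-\delta,\tau_*+\delta)\subset(0,1)$ on which $\tau_*$ is the \emph{only} solution of $G(t)=t/q$; then $t\mapsto G(t)-t/q$ is continuous and nonvanishing on $(\tau_*-\delta,\tau_*)$, hence of one constant sign there by the intermediate value theorem. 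To identify that sign as positive, I would invoke Condition~\ref{cond:noaccum}(ii), i.e.\ $\tau_*=\sup\{t\in(0,1):G(t)>t/q\}$: this produces points $t_n\to\tau_*$ with $G(t_n)>t_n/q$, and such $t_n$ must lie strictly below $\tau_*$ (they cannot equal $\tau_*$, since $G(\tau_*)=\tau_*/q$, nor exceed it, since $G(t)<t/q$ on $(\tau_*,1)$), so eventually $t_n\in(\tau_*-\delta,\tau_*)$; constancy of sign then gives $G(t)>t/q$ throughout $(\tau_*-\delta,\tau_*)$.

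Finally I would let $a$ be any point of the nonempty interval $(\max\{0,\tau_*-\delta\},\tau_*)$; since $a<\tau_*\le q$ we automatically get $a\in(0,q)$ (and note $a$ can in fact be chosen independently of $b$). Property (i) is then immediate because $a\in(\tau_*-\delta,\tau_*)$. For property (ii): on $(a,\tau_*)\subseteq(\tau_*-\delta,\tau_*)$ we have $G(t)>t/q$, and on $(\tau_*,b)\subseteq(\tau_*,1)$ we have $G(t)<t/q$, so the only $t\in(a,b)$ with $G(t)=t/q$ is $\tau_*$, which does lie in $(a,b)$ since $a<\tau_*\le q<b$.

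The one genuinely nonroutine step is pinning the sign of $G(t)-t/q$ just left of $\tau_*$ to be positive rather than negative; that is exactly where part (ii) of Condition~\ref{cond:noaccum} enters, excluding the pathology in which $G$ merely touches the Simes line at $\tau_*$ from below. The rest is elementary bookkeeping with continuity and suprema.
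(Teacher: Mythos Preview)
Your proof is correct and follows essentially the same approach as the paper's: both use Condition~\ref{cond:noaccum}(iii) to get an isolated left neighborhood of $\tau_*$ on which $G(t)-t/q$ has constant sign, then invoke Condition~\ref{cond:noaccum}(ii) to pin that sign as positive via a sequence $t_n\uparrow\tau_*$ with $G(t_n)>t_n/q$, and finally take $a$ inside that neighborhood. The only cosmetic difference is the order of presentation---the paper first extracts the sequence and then appeals to the non-accumulation condition to select a suitable term, whereas you first fix the $\delta$-neighborhood and then use the sequence to determine the sign---but the logical content is the same.
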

\begin{proof}
Pick any $b \in (q,1)$ and suppose that Conditions  \ref{cond:boundedL}, \ref{cond:F0F1_defined}, and \ref{cond:noaccum} hold. By Condition \ref{cond:noaccum},
$$\tau_* \equiv \sup \bigl\{ t \in (0,1) \such G(t) \geq t/q \bigr\}= \sup \bigl\{ t \in (0,1) \such G(t) > t/q \bigr\} >0.$$
Now $\tau_* \leq q$ because $G(t) \leq 1$ for all $t \in (0,1)$. Also, $G(\tau_*)=\tau_*/q$ by continuity of $G$ (see Proposition \ref{prop:G_is_continuous}). Hence, because $\tau_* = \sup \{ t \in (0,1) \such G(t) > t/q \}$ but $G(\tau_*)=\tau_*/q$, there exists a sequence $a_n \uparrow \tau_*$ such that for all $n$, $G(a_n) > a_n/q $ and $a_n< \tau_*$. Since $\{ t \in (0,1) \such G(t) = t/q \}$ does not have an accumulation point at $\tau_*$ and since $G(t)-t/q$ is continuous, there is a sufficiently large $N_*$ with $G(t)>t/q$ for all $t \in [a_{N_*},\tau_*)$. We choose $a=a_{N_*}\in(0,\tau_*)$ and then property (i) holds by our definition of $a_n$. Also, $a\in(0,q)$ because $a<\tau_*\le q$. Turning to property (ii), $G(t)>t/q$ for all $t\in[a,\tau_*)$ by the the choice of $N_*$ and $a$, while for all $t \in (\tau_*,b)$, $G(t) < t/q $ by the definition of $\tau_*$.
\end{proof}

Throughout the text, when conditioning on $\bswu=\bswl \in \real^k$, if Conditions \ref{cond:boundedL}, \ref{cond:F0F1_defined}, and \ref{cond:noaccum} hold, we will let $[a,b]$ be an interval satisfying the properties (i) and (ii) with $a \in (0,q)$ and $b \in (q,1)$ that are guaranteed by Proposition \ref{prop:existance_ab}. 

\subsubsection{Defining our stochastic process and its Gaussian process limit}{\label{sec:Defining_GP_of_interest}}

Our stochastic processes of interest are two jointly distributed random càdlàg functions on $[a,b]$.
We will show convergence to a pair of Gaussian processes with continuous sample paths on $[a,b]$. The expressions $C([a,b]\times\{0,1\})$ and $(C[a,b])^2$ are both awkward, while $C[a,b]^2$ denotes functions on a square region. Therefore, we use the symbol $[a,b]_2$ to denote $[a,b]\times \{0,1\}$ and study random elements in $C[a,b]_2$ and $D[a,b]_2$. Explicitly, $C[a,b]_2$ is the collection of all pairs of real valued continuous functions on $[a,b]$ while $D[a,b]_2$ is the collection of all pairs of real valued càdlàg functions 
on $[a,b]$.
We study
the following processes in $D[a,b]_2$:
\begin{align}\label{eq:defws}
\begin{split}
W_{m,r}(t) &\equiv \sqrt{m} \big( \hat{F}_{m,r}(t) -F_{m,r}(t) \big) \quad\text{and}\quad \\
\hat{W}_{m,r}(t) &\equiv \sqrt{m} \big( \hat{F}_{m,r}(t) -F_{r}(t) \big).
\end{split}
\end{align}

We are ultimately interested in a functional central limit theorem (FCLT) for the joint process $\big(\hat{W}_{m,0}(\cdot),\hat{W}_{m,1}(\cdot)\big)$, so we must find the limiting joint covariance kernel of this pair of processes. To describe this limiting covariance kernel, we introduce some convenient definitions and notation.

For convenience, throughout the text we will define $\{ \Gamma^{(m)} \}_{m=1}^{\infty}$ to be the sequence of correlation matrices corresponding to $\{\Sigma^{(m)} \}_{m=1}^{\infty}$ and, as before, for each $m,i$ define $\tilde{\err}_{mi} \equiv \err_{mi}/\sqrt{\var(\epsilon_{mi})}=\err_{mi}/\sqrt{1-\Vert \bm{L}_{mi} \Vert_2^2}$. Note that $(\tilde{\err}_{m1},\dots,\tilde{\err}_{mm} ) \sim \dnorm(0,\Gamma^{(m)})$ and that each $\tilde{\err}_{mi}$ has unit variance. For any $t,s \in [0,1]$ and $|\rho|\le1$, define
\begin{align}\label{eq:rhotilde} 
\tilde{\rho}(t,s, \rho) \equiv \Pr\biggl( \tilde{\err}_1 \geq \bar{\Phi}^{-1}(t), \tilde{\err}_2 \geq \bar{\Phi}^{-1}(s)  \,\Bigm|\, 
\Bigl(\begin{matrix} \tilde{\err}_1 \\ \tilde{\err}_2  \end{matrix} \Bigr)
\sim \dnorm\Big( \bm{0},  \Bigl(\,\begin{matrix} 1 & \rho \\ \rho & 1  \end{matrix}\Bigr)  \Big) \biggr)-ts.
\end{align}
Given a bivariate Gaussian with unit variance and correlation $\rho$, the above quantity is the covariance between the indicator that the first coordinate of this bivariate Gaussian exceeds its $1-t$ quantile and the indicator that the second coordinate of this bivariate Gaussian exceeds its $1-s$ quantile.

Now for any $s,t\in(a,b)$ and $r_0,r_1\in\{0,1\}$ and $m \in \mathbb{N}_+$ define 
$$c_m^{(r_0,r_1)} (t,s) \equiv  \cov \big(W_{m,r_0}(t) , W_{m,r_1}(s) \big).$$
It is convenient to break up the expression of $c_m^{(r_0,r_1)} (t,s)$ into two terms. Define $$c_{m,\text{diag}}^{(r_0,r_1)}(t,s) \equiv \frac{1}{m} \sum_{i=1}^m \Big( \pi_{r_0}^{(m)} \gamma_{mir_0}(t \wedge s ) I \{r_0=r_1 \} - \pi_{r_0}^{(m)} \pi_{r_1}^{(m)} \gamma_{mir_0}(t)  \gamma_{mir_1}(s) \Big),$$ and define $$c_{m,\text{cross}}^{(r_0,r_1)}(t,s) \equiv \frac{\pi_{r_0}^{(m)} \pi_{r_1}^{(m)} }{m} \sum_{i \neq j} \tilde{\rho} \Big( \gamma_{mir_0}(t) ,\gamma_{mjr_1}(s) , \Gamma^{(m)}_{ij} \Big).$$ 
In the following proposition we show that $c_m=c_{m,\text{diag}}+c_{m,\text{cross}}$. 

\begin{proposition}\label{prop:splitcov}
    For any $s,t\in[a,b]$ and $r_0,r_1\in\{0,1\}$ and $m\ge2$
    $$c_{m}^{(r_0,r_1)}(t,s)=c_{m,\mathrm{diag}}^{(r_0,r_1)}(t,s)+c_{m,\mathrm{cross}}^{(r_0,r_1)}(t,s).$$
\end{proposition}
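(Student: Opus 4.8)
The plan is to reduce the statement to an elementary computation with the double sum defining the covariance of the two empirical subdistribution functions, and then to separate the $i=j$ contributions (which will produce $c_{m,\mathrm{diag}}$) from the $i\ne j$ contributions (which will produce $c_{m,\mathrm{cross}}$). Since $W_{m,r}(t)=\sqrt m\bigl(\hat F_{m,r}(t)-\e[\hat F_{m,r}(t)]\bigr)$, I would first write, with all expectations and covariances taken conditionally on $\bswu=\bswl$,
\[
c_m^{(r_0,r_1)}(t,s)=m\,\cov\bigl(\hat F_{m,r_0}(t),\hat F_{m,r_1}(s)\bigr)=\frac1m\sum_{i=1}^m\sum_{j=1}^m\cov\bigl(H_{mir_0}I\{P_{mi}\le t\},\,H_{mjr_1}I\{P_{mj}\le s\}\bigr),
\]
so that it suffices to evaluate the generic summand in the two cases $i=j$ and $i\ne j$.

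For the diagonal terms I would use the pointwise identities $H_{mir_0}H_{mir_1}=I\{r_0=r_1\}\,H_{mir_0}$ (valid because $H_{mi}\in\{0,1\}$, $H_{mi0}=1-H_{mi}$, $H_{mi1}=H_{mi}$, so the product vanishes unless $r_0=r_1$) and $I\{P_{mi}\le t\}I\{P_{mi}\le s\}=I\{P_{mi}\le t\wedge s\}$. Conditioning on $H_{mi}$ and using $\gamma_{mir}(t)=\Pr(P_{mi}\le t\mid H_{mi}=r,\bswu=\bswl)$, the cross-moment becomes $I\{r_0=r_1\}\,\pi_{r_0}^{(m)}\gamma_{mir_0}(t\wedge s)$ and the product of means becomes $\pi_{r_0}^{(m)}\pi_{r_1}^{(m)}\gamma_{mir_0}(t)\gamma_{mir_1}(s)$; summing their difference over $i$ and dividing by $m$ yields exactly $c_{m,\mathrm{diag}}^{(r_0,r_1)}(t,s)$.

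For the off-diagonal terms the key observation is that, conditionally on $\bswu=\bswl$ and on $H_{mi}=r_0,H_{mj}=r_1$, the event $\{P_{mi}\le t\}$ coincides with $\{\tilde{\err}_{mi}\ge\bar{\Phi}^{-1}(\gamma_{mir_0}(t))\}$ and likewise $\{P_{mj}\le s\}=\{\tilde{\err}_{mj}\ge\bar{\Phi}^{-1}(\gamma_{mjr_1}(s))\}$, since $\bar{\Phi}^{-1}(\gamma_{mir_0}(t))=(\bar{\Phi}^{-1}(t)-\mu_Ar_0-\bm{L}_{mi}^\tran\bswl)/\sqrt{1-\Vert\bm{L}_{mi}\Vert_2^2}$ and $\tilde{\err}_{mi}=\err_{mi}/\sqrt{1-\Vert\bm{L}_{mi}\Vert_2^2}$. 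Because $(\err_{mi},\err_{mj})$ is independent of $(H_{mi},H_{mj})$ and of $\bswu$, and the $H$'s are i.i.d.\ $\dbern(\pi_1^{(m)})$, the cross-moment factors as $\pi_{r_0}^{(m)}\pi_{r_1}^{(m)}\Pr\bigl(\tilde{\err}_{mi}\ge\bar{\Phi}^{-1}(\gamma_{mir_0}(t)),\,\tilde{\err}_{mj}\ge\bar{\Phi}^{-1}(\gamma_{mjr_1}(s))\bigr)$ with $(\tilde{\err}_{mi},\tilde{\err}_{mj})$ bivariate standard normal of correlation $\Gamma^{(m)}_{ij}$; by the definition of $\tilde{\rho}$ in~\eqref{eq:rhotilde} this probability equals $\tilde{\rho}\bigl(\gamma_{mir_0}(t),\gamma_{mjr_1}(s),\Gamma^{(m)}_{ij}\bigr)+\gamma_{mir_0}(t)\gamma_{mjr_1}(s)$. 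Subtracting the product of means $\pi_{r_0}^{(m)}\gamma_{mir_0}(t)\cdot\pi_{r_1}^{(m)}\gamma_{mjr_1}(s)$ cancels the $\gamma\gamma$ term and leaves $\pi_{r_0}^{(m)}\pi_{r_1}^{(m)}\tilde{\rho}(\gamma_{mir_0}(t),\gamma_{mjr_1}(s),\Gamma^{(m)}_{ij})$; summing over $i\ne j$ and dividing by $m$ gives $c_{m,\mathrm{cross}}^{(r_0,r_1)}(t,s)$, and adding the two pieces proves the claim.

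I do not expect a genuine analytic obstacle here: the proof is pure bookkeeping. The only point needing care is the joint independence built into the model — that $\{H_{mi}\}$, $\{\err_{mi}\}$ and $\bswu$ are mutually independent, so that conditioning on $\bswu=\bswl$ leaves the $H$'s i.i.d.\ Bernoulli and the $\tilde{\err}$'s jointly Gaussian with correlation matrix $\Gamma^{(m)}$ — which is exactly what legitimizes the factorizations of the cross-moments and is already implicitly used in the stated formula for $\gamma_{mir}$.
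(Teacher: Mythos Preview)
Your proposal is correct and follows essentially the same approach as the paper: expand $c_m^{(r_0,r_1)}(t,s)$ as a double sum of covariances, then separately compute the $i=j$ terms (using $H_{mir_0}H_{mir_1}=I\{r_0=r_1\}H_{mir_0}$ and $I\{P_{mi}\le t\}I\{P_{mi}\le s\}=I\{P_{mi}\le t\wedge s\}$) and the $i\ne j$ terms (using independence of the $H$'s from the $\err$'s to factor out $\pi_{r_0}^{(m)}\pi_{r_1}^{(m)}$ and then recognizing $\tilde\rho$). The paper's write-up is slightly terser in the off-diagonal step---it jumps directly to $\pi_{r_0}^{(m)}\pi_{r_1}^{(m)}\tilde\rho(\cdot)$ without spelling out the intermediate $+\gamma_{mir_0}(t)\gamma_{mjr_1}(s)$ cancellation you make explicit---but the argument is identical.
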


\begin{proof}
    For $i,j\in[m]$ define 
    \begin{align*} C_{i,j,m}^{(r_0,r_1)}(t,s) 
    &\equiv  \cov \big(H_{mir_0} I \{P_{mi} \leq t \}, H_{mjr_1} I \{P_{mj} \leq s \} \big)  \\  
    & =\cov\big(H_{mir_0} I \{ \bar{\Phi}(\tilde{\err}_{mi}) \leq \gamma_{mir_0}(t)  \},H_{mjr_1}  I \{ \bar{\Phi}( \tilde{\err}_{mj}) \leq \gamma_{mjr_1}(s)  \} \big).  \end{align*}
    For $i \neq j$, $H_{mir_0}$, $H_{mjr_1}$ and $\tilde{\err}$ are all independent, so 
    \begin{align*} 
    C_{i,j,m}^{(r_0,r_1)}(t,s) 
    &=  \pi_{r_0}^{(m)} \pi_{r_1}^{(m)}  \cov \big( I \{ \bar{\Phi}(\tilde{\err}_{mi}) \leq  \gamma_{mir_0}(t)   \}, I \{  \bar{\Phi} (\tilde{\err}_{mj}) \leq  \gamma_{mjr_1}(s)   \} \big) \\ 
    &= \pi_{r_0}^{(m)} \pi_{r_1}^{(m)} \tilde{\rho} \big( \gamma_{mir_0}(t) ,\gamma_{mjr_1}(s) , \Gamma^{(m)}_{ij} \big).   \end{align*}
    When $i=j$, $H_{mir_0} H_{mjr_1}=H_{mir_0} I\{r_0=r_1 \}$ and $\tilde{\epsilon}_{mi}=\tilde{\epsilon}_{mj}$, so 
    that
    $$\begin{aligned} C_{i,j,m}^{(r_0,r_1)}(t,s) = & \pi_{r_0}^{(m)} I \{r_0=r_1 \} \gamma_{mir_0}(t \wedge s) - \pi_{r_0}^{(m)}\pi_{r_1}^{(m)} \gamma_{mir_0}(t) \gamma_{mir_1}(s).  \end{aligned}$$ 
    Since the above expressions hold for any $i,j \in [m]$,
        \begin{align*} 
        c_m^{(r_0,r_1)} (t,s)  &= \cov \big(W_{m,r_0}(t) , W_{m,r_1}(s) \big) \\ 
     &= m \cov \big(\hat{F}_{m,r_0}(t) , \hat{F}_{m,r_1}(s) \big) \\  
     &= \frac{1}{m} \sum_{i=1}^m \sum_{j=1}^m \cov \big(H_{mir_0} I \{P_{mi} \leq t \}, H_{mjr_1} I \{P_{mj} \leq s \} \big)  \\ 
      &= \frac{1}{m} \sum_{i=1}^m \sum_{j=1}^m C_{i,j,m}^{(r_0,r_1)}(t,s) \\  
      &= \frac{1}{m} \sum_{i=1}^m  C_{i,i,m}^{(r_0,r_1)}(t,s) +\frac{1}{m} \sum_{i \neq j} C_{i,j,m}^{(r_0,r_1)}(t,s) \\ 
       &= c_{m,\mathrm{diag}}^{(r_0,r_1)}(t,s)+c_{m,\mathrm{cross}}^{(r_0,r_1)}(t,s). \qedhere
       \end{align*}
    
\end{proof}

Now 
define  
\begin{align}\label{eq:defc}
c^{(r_0,r_1)} (t,s) \equiv \lim_{m \rightarrow \infty} c_m^{(r_0,r_1)}(t,s).
\end{align}
By the simplified formula for $c_m^{(r_0,r_1)}(t,s)$, the above limit exists by Condition~\ref{cond:F0F1_defined} if we further impose Condition~\ref{cond:cov_kernel_converges} below. 

\begin{condition}\label{cond:cov_kernel_converges}
For any $(s,t)\in[a,b]$ and $r_0,r_1\in\{0,1\}$,
 \begin{align*}
   &\lim_{m \rightarrow \infty} \frac{\pi_{r_0}^{(m)} \pi_{r_1}^{(m)}}{m} \sum_{i=1}^m \gamma_{mir_0}(t) \gamma_{mir_1}(s)\quad\text{and}\\ 
   &\lim_{m \rightarrow \infty} \frac{\pi_{r_0}^{(m)} \pi_{r_1}^{(m)}}{m} \sum_{i \neq j} \tilde{\rho}( \gamma_{mir_0}(t),\gamma_{mir_1}(s), \Gamma^{(m)}_{ij} )  
   \end{align*}
   both exist.
\end{condition}

It is easy to see that the function $c(\cdot,\cdot)$ 
defined above gives a joint covariance kernel that is symmetric and positive semidefinite because it is the limit of symmetric and positive semidefinite joint covariance kernels $c_m$. \subsubsection{Regularity conditions on \texorpdfstring{$F_0$, $F_1$, $F_{m,0}$ and $F_{m,1}$}{ the limit of the subdistributions}} 

Before introducing the main theorems, we introduce another two conditions that will be used in their proof. 

\begin{condition}\label{cond:F0F1_differfentiable}
    Both $F_0$ and $F_1$ are differentiable at $\tau_*$.
\end{condition}  

The final condition is needed to derive a $\big(\hat{W}_{m,0}(\cdot), \hat{W}_{m,1}(\cdot) \big)$ FCLT from a $\big(W_{m,0}(\cdot), W_{m,1}(\cdot) \big)$ FCLT. We would like to hold the subdistribution functions $F_{m,0}$ and $F_{m,1}$ constant as $m$ changes but this does not hold in all cases of interest.  Instead we assume that they approach limits $F_0$ and $F_1$ at a fast rate.
\begin{condition}\label{cond:F0F1_fast_unif_conv}
    For $r \in \{0,1\}$, $\lim_{m \rightarrow \infty} \sup_{t \in [a,b]} \big|  \sqrt{m} \big(F_{m,r}(t) - F_r(t) \big)  \big| =0$.
\end{condition}

\paragraphLocal{Notes about Condition~\ref{cond:F0F1_fast_unif_conv}:} As with all of the other conditions in Section \ref{sec:def_and_conditions}, Condition~\ref{cond:F0F1_fast_unif_conv} merely needs to hold for the fixed value of the latent factor $\bswu\in\real^k$ on which we condition. In addition, a version of Theorem~\ref{theorem:conditionalFDPCLT} below will still hold if we loosen Condition~\ref{cond:F0F1_fast_unif_conv} to say that there exist Gaussian processes $Z_0$ and $Z_1$ on $[a,b]$ that are independent from the noise $\varepsilon$ such that for both $r \in \{0,1 \}$, 
$$ \sqrt{m} \big(F_{m,r}(\cdot) - F_r(\cdot) \big) \xrightarrow{\mathcal{D}} Z_r (\cdot).$$ This looser condition can be useful to study asymptotic behavior of the BH procedure in settings where the nonnull effect sizes $\mu_A$ are not constant and instead are assumed to come from some prior distribution. However, if we use this looser condition, the resulting theorem statement will be messier.

\section{Statement of the theorems}\label{sec:StatmentOfTheorems}

\begin{theorem}{\label{theorem:conditionalFDPCLT}} 
For the model of Section \ref{sec:MultipleTestingSetup}, suppose that  conditionally on a specific value of the latent factor $\bswu=\bswl \in \real^k$ that Conditions 1--\total{condition} all hold.
Then
\begin{equation}\label{eq:Theorem_ConditionalCLT}
    \sqrt{m} \Big( \emph{\FDP}_m - \frac{qF_0(\tau_*)}{\tau_*} \mid\bswu=\bswl\Big) \xrightarrow{d} \dnorm(0, \sigma_L^2) 
\end{equation} 
as $m\to\infty$ where
\begin{equation}\label{eq:Variance_ConditionalCLT}
    \sigma_L^2 \equiv  \frac{q^2}{\tau_*^2 } 
    \big( (1+\alpha)^2 c^{(0,0)}(\tau_*,\tau_*) + \alpha^2  c^{(1,1)}(\tau_*,\tau_*) +  2 \alpha(1+\alpha) c^{(1,0)}(\tau_*,\tau_*) \big)
\end{equation} 
for the function $F_0$ given at~\eqref{eq:defFr},
the asymptotic ECDF $G$ given at~\eqref{eq:defghatandg},
the Simes point $\tau_*$ from~\eqref{eq:Simes_point_def},
the covariances $c^{(r_0,r_1)}(\cdot,\cdot)$ given by~\eqref{eq:defc}
and
\begin{equation}\label{eq:Def_of_alpha}
    \alpha \equiv \frac{ F_0'(\tau_*)-F_0(\tau_*)/\tau_*}{1/q -G'(\tau_*)}.
\end{equation} 
\end{theorem}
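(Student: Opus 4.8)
The plan is to realize $\FDP_m$ (conditionally on $\bswu=\bswl$) as the image of the pair of empirical subdistribution functions $(\hat F_{m,0},\hat F_{m,1})$ under a single fixed functional $\phi$, prove a functional central limit theorem for that pair on $D[a,b]_2$, and then transport the CLT through $\phi$ by the functional delta method. Throughout, conditioning on $\bswu=\bswl$ merely means we treat $\bswl$ as a deterministic constant, so all of $F_{m,r},F_r,\gamma_{mir},c_m,c,\tau_*,[a,b],\alpha$ are fixed numbers or functions and the only randomness is in the triangular array $\xi_{mi}=(\err_{mi},H_{mi})$.

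\emph{Step 1: an FCLT for $(W_{m,0},W_{m,1})$.} I would first show that the $D[a,b]_2$-valued processes $\big(W_{m,0}(\cdot),W_{m,1}(\cdot)\big)$ of \eqref{eq:defws} converge weakly to a mean-zero Gaussian process $(W_0,W_1)$ with continuous sample paths and covariance kernel $c^{(r_0,r_1)}$ from \eqref{eq:defc}. Convergence of finite-dimensional distributions follows from the Cram\'er--Wold device together with a CLT for triangular arrays of $\alpha$-mixing sequences --- this is exactly why Condition~\ref{cond:niceQandgamma} is written in the Andrews--Pollard form --- once the covariances are seen to converge, which is Condition~\ref{cond:cov_kernel_converges} (together with Condition~\ref{cond:F0F1_defined}), the limiting covariances being $c^{(r_0,r_1)}$ by Proposition~\ref{prop:splitcov}. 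For tightness I would establish an asymptotic equicontinuity / maximal inequality for $\sqrt m(\hat F_{m,r}-F_{m,r})$ uniform in $m$: the summands $H_{mir}I\{P_{mi}\le t\}=H_{mir}I\{\bar\Phi(\tilde\err_{mi})\le\gamma_{mir}(t)\}$ are monotone in $t$ with increments controlled on $[a,b]$ by the uniformly Lipschitz moduli of the $\gamma_{mir}$ (the bound $C_\epsilon$ from the proof of Proposition~\ref{prop:G_is_continuous}, available thanks to Condition~\ref{cond:boundedL}), while the required moment bounds on sums over blocks come from the interlaced $\rho$-mixing Condition~\ref{cond:mixalot} (and again Condition~\ref{cond:niceQandgamma}) via a chaining or bracketing argument. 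Finally, Condition~\ref{cond:F0F1_fast_unif_conv} gives $\sup_{t\in[a,b]}|\sqrt m(F_{m,r}(t)-F_r(t))|\to0$, so the re-centered processes $(\hat W_{m,0},\hat W_{m,1})$ have the same weak limit $(W_0,W_1)$; a by-product is $\sup_{t\in[a,b]}|\hat G_m(t)-G(t)|\xrightarrow{p}0$.

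\emph{Step 2: the functional, its Hadamard derivative, and the delta method.} Define $\phi:D[a,b]_2\to\real$ by $\phi(f_0,f_1)=f_0\big(\tau(f_0+f_1)\big)\big/(f_0+f_1)\big(\tau(f_0+f_1)\big)$, where $\tau(g)$ is the largest $t\in[a,b]$ with $g(t)\ge t/q$. Since $\hat G_m=\hat F_{m,0}+\hat F_{m,1}$ is the ECDF of the $p$-values and $\tauBHm$ is the last crossing of $\hat G_m$ with the Simes line, I would argue that with probability tending to one this last crossing lies in $[a,b]$: use $\hat G_m\to G$ uniformly from Step~1, together with $G(a)>a/q$ and $G(t)<t/q$ on $(\tau_*,b]$ (Proposition~\ref{prop:existance_ab}) and $t/q-G(t)\ge b/q-1>0$ on $[b,1]$ since $b>q$ (plus a Glivenko--Cantelli bound for $\hat G_m$ on $[b,1]$). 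Hence $\FDP_m=\phi(\hat F_{m,0},\hat F_{m,1})$ eventually, and $\phi(F_0,F_1)=qF_0(\tau_*)/\tau_*$ using $G(\tau_*)=\tau_*/q$. I would then show $\phi$ is Hadamard-differentiable at $(F_0,F_1)$ tangentially to $C[a,b]_2$. The core is the last-crossing map $g\mapsto\tau(g)$: Condition~\ref{cond:noaccum} isolates $\tau_*$ as the unique, genuine crossing inside $[a,b]$, and Condition~\ref{cond:F0F1_differfentiable} gives differentiability of $G=F_0+F_1$ at $\tau_*$ with $1/q-G'(\tau_*)>0$ (so $\alpha$ in \eqref{eq:Def_of_alpha} is well defined); a standard implicit-function argument then yields directional derivative $h\mapsto h(\tau_*)/(1/q-G'(\tau_*))$ for that map. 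Composing with evaluation at $\tau_*$ and the ratio map and simplifying with the definition of $\alpha$ gives
\begin{equation*}
\phi'_{(F_0,F_1)}(h_0,h_1)=\frac{q}{\tau_*}\big((1+\alpha)h_0(\tau_*)+\alpha\,h_1(\tau_*)\big).
\end{equation*}
Applying the functional delta method for weak convergence to Steps~1--2 gives $\sqrt m\big(\FDP_m-qF_0(\tau_*)/\tau_*\mid\bswu=\bswl\big)\xrightarrow{d}\phi'_{(F_0,F_1)}(W_0,W_1)$, a scalar mean-zero Gaussian; computing its variance from $\cov\big(W_{r_0}(\tau_*),W_{r_1}(\tau_*)\big)=c^{(r_0,r_1)}(\tau_*,\tau_*)$ reproduces $\sigma_L^2$ in \eqref{eq:Variance_ConditionalCLT}.

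\emph{Main obstacle.} The hard part is Step~1, and within it the tightness / asymptotic-equicontinuity estimate: the relevant summands form a non-stationary triangular array carrying both a deterministic factor shift and genuinely dependent ($\alpha$- and interlaced $\rho$-mixing) noise, so one needs a maximal inequality with moment bounds uniform in $m$, and this is where Conditions~\ref{cond:niceQandgamma}, \ref{cond:mixalot} and~\ref{cond:boundedL} all do real work. A secondary point is checking Hadamard differentiability of the last-crossing functional together with the localization of $\tauBHm$ to $[a,b]$, but these are routine given Conditions~\ref{cond:noaccum} and~\ref{cond:F0F1_differfentiable} and Proposition~\ref{prop:existance_ab}.
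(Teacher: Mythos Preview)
Your proposal is correct and follows essentially the same route as the paper: an FCLT for $(\hat W_{m,0},\hat W_{m,1})$ on $D[a,b]_2$, Hadamard differentiability of the FDP functional at $(F_0,F_1)$ (with the same derivative you obtain), the eventual identification $\FDP_m=\phi(\hat F_{m,0},\hat F_{m,1})$, and the functional delta method.

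Two small points where your attributions differ from the paper's execution. First, the roles of Conditions~\ref{cond:niceQandgamma} and~\ref{cond:mixalot} are swapped in your account: the paper obtains f.d.d.\ convergence from Peligrad's triangular-array CLT, which is where the interlaced $\rho$-mixing Condition~\ref{cond:mixalot} is essential (Condition~\ref{cond:niceQandgamma} enters only to ensure strong mixing there); Condition~\ref{cond:niceQandgamma} in its Andrews--Pollard form is then used for tightness via their bracketing FCLT, exactly the chaining argument you describe. Second, your Glivenko--Cantelli bound on $[b,1]$ for localizing $\tauBHm$ is unnecessary: since $b>q$ and $\hat G_m\le1$, one has $\tauBHm\le q<b$ deterministically, so only the lower endpoint needs the probabilistic argument, and the paper handles that with the single pointwise convergence $\hat G_m(a)\to G(a)$ rather than uniform convergence.
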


\begin{proof}
See the \hyperref[sec:supp]{supplemental material} for a proof of this theorem. 
\end{proof}

The proof is quite long but to summarize we first derive an FCLT for the joint process $(\hat{F}_{m,0}, \hat{F}_{m,1})$ by proving finite dimensional distribution convergence using a CLT from \cite{Neumann13} and then extend to an FCLT by using a result from \cite{AndrewsAndPollard}. We then define $\Psi^{(\FDP)}: D[a,b]_2 \rightarrow \real$ to be a particular function satisfying $\FDP_m=\Psi^{(\FDP)}(\hat{F}_{m,0},\hat{F}_{m,1})$ with probability converging to $1$ as $m \rightarrow \infty$. Then we argue that $\Psi^{(\FDP)}$ is Hadamard differentiable at $(F_0,F_1)$ tangentially to $C[a,b]_2$ and compute the Hadamard derivative by mimicking the approach in \cite{Neuvial2008}. To complete the proof of the CLT given in \eqref{eq:Theorem_ConditionalCLT}, we tie these results together with the functional delta method in Chapter 20.2 of \cite{VDV}.
Using the same proof technique we obtain the following conditional CLT for the ratio $V_m/m$.

\begin{theorem}{\label{theorem:conditionalFPR_CLT}} 
Under the conditions of Theorem~\ref{theorem:conditionalFDPCLT}, \begin{equation}\label{eq:Theorem_ConditionalFPR_CLT}
    \sqrt{m} \Big( \frac{V_m}{m} - F_0(\tau_*)\bigm|\bswu=\bswl \Big) \xrightarrow{d} \dnorm(0, \sigma_R^2) \quad \text{as } m \rightarrow \infty
\end{equation} 
where
\begin{equation}\label{eq:VarianceFPR_ConditionalCLT}
    \sigma_R^2 \equiv   (1+\beta)^2 c^{(0,0)}(\tau_*,\tau_*) + \beta^2  c^{(1,1)}(\tau_*,\tau_*) +  2 \beta(1+\beta) c^{(1,0)}(\tau_*,\tau_*)  
\end{equation} 
for 
\begin{equation}\label{eq:Def_of_beta}
    \beta \equiv \frac{ F_0'(\tau_*)}{1/q -G'(\tau_*)}.
\end{equation}

\end{theorem}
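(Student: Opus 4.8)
The plan is to imitate the proof of Theorem~\ref{theorem:conditionalFDPCLT}, starting from the observation that $V_m/m = \hat F_{m,0}(\tauBHm)$ and that, restricted to the focal interval $[a,b]$, $\tauBHm$ is the last point where the empirical ECDF $\hat G_m = \hat F_{m,0}+\hat F_{m,1}$ crosses the Simes line. First I would reuse, essentially verbatim, the first half of the Theorem~\ref{theorem:conditionalFDPCLT} proof to obtain the joint functional CLT
\[
\bigl(\hat W_{m,0}(\cdot),\hat W_{m,1}(\cdot)\bigr)\ \xrightarrow{d}\ \bigl(\mathbb{W}_0(\cdot),\mathbb{W}_1(\cdot)\bigr)\quad\text{in }D[a,b]_2,
\]
where the limit is a centered Gaussian process with continuous sample paths and covariance kernel $c^{(r_0,r_1)}(\cdot,\cdot)$ from \eqref{eq:defc}; this invokes the same conditions used there for the joint FCLT, with Condition~\ref{cond:F0F1_fast_unif_conv} used to recenter the processes $W_{m,r}$ of \eqref{eq:defws} from $F_{m,r}$ to $F_r$.

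Next I would introduce the functional $\Psi^{(\FPR)}:D[a,b]_2\to\real$ defined by $\Psi^{(\FPR)}(f_0,f_1)\equiv f_0\bigl(\tau(f_0+f_1)\bigr)$, where $\tau(g)\equiv\sup\{t\in[a,b]:g(t)\ge t/q\}$ is the same last-crossing map that appears inside $\Psi^{(\FDP)}$. As in the Theorem~\ref{theorem:conditionalFDPCLT} proof, Condition~\ref{cond:noaccum} together with the construction of $[a,b]$ (Proposition~\ref{prop:existance_ab}) forces, with probability tending to $1$, the BH threshold to fall in $(a,b)$ and to equal $\tau(\hat G_m)$, so that $V_m/m=\Psi^{(\FPR)}(\hat F_{m,0},\hat F_{m,1})$ on an event of probability tending to $1$; moreover $\Psi^{(\FPR)}(F_0,F_1)=F_0(\tau(G))=F_0(\tau_*)$, matching the centering in \eqref{eq:Theorem_ConditionalFPR_CLT}. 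The core step is Hadamard differentiability of $\Psi^{(\FPR)}$ at $(F_0,F_1)$ tangentially to $C[a,b]_2$, which I would obtain by the chain rule from two ingredients already available from the Theorem~\ref{theorem:conditionalFDPCLT} proof: (a) Hadamard differentiability of $g\mapsto\tau(g)$ at $G$ tangentially to $C[a,b]$, with derivative $h\mapsto h(\tau_*)/\bigl(1/q-G'(\tau_*)\bigr)$, using $G(\tau_*)=\tau_*/q$ and continuity of $G$ (Proposition~\ref{prop:G_is_continuous}), differentiability of $G=F_0+F_1$ at $\tau_*$ (Condition~\ref{cond:F0F1_differfentiable}), the transversality $1/q-G'(\tau_*)>0$, and uniqueness of the crossing in $[a,b]$; and (b) Hadamard differentiability of the ``evaluation at a moving point'' map $(f,\sigma)\mapsto f(\sigma)$ at $(F_0,\tau_*)$ tangentially to $C[a,b]\times\real$, with derivative $(h,s)\mapsto h(\tau_*)+F_0'(\tau_*)\,s$, where Condition~\ref{cond:F0F1_differfentiable} supplies differentiability of $F_0$ at $\tau_*$ and where restriction to continuous tangent directions is exactly what makes this map Hadamard differentiable despite $\hat F_{m,0}$ being càdlàg (the relevant limit $\mathbb{W}_0$ has continuous paths). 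Composing via van der Vaart's chain rule, the Hadamard derivative of $\Psi^{(\FPR)}$ at $(F_0,F_1)$ is
\[
(h_0,h_1)\ \longmapsto\ h_0(\tau_*)+F_0'(\tau_*)\,\frac{h_0(\tau_*)+h_1(\tau_*)}{1/q-G'(\tau_*)}\ =\ (1+\beta)\,h_0(\tau_*)+\beta\,h_1(\tau_*),
\]
with $\beta$ as in \eqref{eq:Def_of_beta}.

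Finally I would apply the functional delta method (Chapter~20.2 of \cite{VDV}) to the composition $\Psi^{(\FPR)}(\hat F_{m,0},\hat F_{m,1})$, together with the fact that this equals $V_m/m$ with probability tending to $1$, to conclude that
\[
\sqrt m\bigl(V_m/m-F_0(\tau_*)\bigr)\ \xrightarrow{d}\ (1+\beta)\,\mathbb{W}_0(\tau_*)+\beta\,\mathbb{W}_1(\tau_*),
\]
a mean-zero Gaussian whose variance is $(1+\beta)^2\var(\mathbb{W}_0(\tau_*))+\beta^2\var(\mathbb{W}_1(\tau_*))+2\beta(1+\beta)\cov(\mathbb{W}_0(\tau_*),\mathbb{W}_1(\tau_*))$; since $\var(\mathbb{W}_r(\tau_*))=c^{(r,r)}(\tau_*,\tau_*)$ and $\cov(\mathbb{W}_0(\tau_*),\mathbb{W}_1(\tau_*))=c^{(1,0)}(\tau_*,\tau_*)$, this is exactly $\sigma_R^2$ of \eqref{eq:VarianceFPR_ConditionalCLT}. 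The main obstacle is the same one encountered for Theorem~\ref{theorem:conditionalFDPCLT}, namely establishing Hadamard differentiability of a functional that composes an evaluation with the data-dependent last-crossing point; but here it is the strictly easier numerator-only case, since without a ratio there is no denominator that must be kept bounded away from zero, so I do not anticipate any genuinely new difficulty beyond bookkeeping — indeed $\Psi^{(\FPR)}$ is simply the numerator appearing inside $\Psi^{(\FDP)}$.
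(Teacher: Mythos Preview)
Your proposal is correct and follows essentially the same route as the paper: the paper proves Theorems~\ref{theorem:conditionalFDPCLT} and~\ref{theorem:conditionalFPR_CLT} together by first establishing the joint FCLT $(\hat W_{m,0},\hat W_{m,1})\xrightarrow{\mathcal D}(W_0,W_1)$, then applying the functional delta method to $\Psi^{(\FPR)}$ (defined exactly as you propose) using its Hadamard derivative at $(F_0,F_1)$ tangentially to $C[a,b]_2$, and finally replacing $\Psi^{(\FPR)}(\hat F_{m,0},\hat F_{m,1})$ by $V_m/m$ via a Slutsky argument. The only cosmetic difference is in the Hadamard derivative computation: you package it as a chain rule through the evaluation map $(f,\sigma)\mapsto f(\sigma)$, whereas the paper splits the increment directly as $H_{0t}(\tau_t)+\bigl(F_0(\tau_t)-F_0(\tau_*)\bigr)/t$ and applies the chain rule only to the composite $F_0\circ\mathcal T$ for the second piece; both arrive at the same linear map $(h_0,h_1)\mapsto(1+\beta)h_0(\tau_*)+\beta h_1(\tau_*)$.
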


\begin{proof}
See the \hyperref[sec:supp]{supplemental material} for a proof of this theorem. 
\end{proof}

It is often the case that for a particular factor and noise model, Conditions 1--\total{condition} will not hold for all values of the latent factor $\bswu =\bswl \in \real^k$. Condition \ref{cond:noaccum} will often be violated when drawing $\bswu \sim \dnorm(0,I_k)$. We do not expect a positive probability that $\tau_*$ will be an accumulation point of $\{t \such G(t)=t/q\}$, nor do we expect a positive probability that $\sup \{ t \in (0,1) \such G(t) \geq t/q \} \neq \sup \{ t \in (0,1) \such G(t) > t/q \}$, but we do expect a positive probability that $\tau_*=0$. Therefore, in the next theorem we describe the asymptotic behavior of the BH procedure, conditional on $\bswu=\bswl \in \real^k$ when $\tau_*=0$.

\begin{theorem}{\label{theorem:tau_star_0_res}}
    
    For the model of Section \ref{sec:MultipleTestingSetup}, when conditioning on the latent factor $\bswu=\bswl \in \real^k$, suppose that Conditions \ref{cond:boundedL}, \ref{cond:niceQandgamma}, and \ref{cond:F0F1_defined} hold, that $\tau_*=0$, and that Conditions \ref{cond:cov_kernel_converges}, \ref{cond:F0F1_fast_unif_conv} hold when setting $[a,b]=[0,1]$. Then 
    \begin{equation}
        \tauBHm \giv \bswu=\bswl \xrightarrow{p} 0 \quad\text{ and } \quad\frac{V_m}{m} \giv \bswu=\bswl \xrightarrow{p} 0.
    \end{equation}
\end{theorem}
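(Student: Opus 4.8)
The plan is: from $\tau_*=0$ deduce that the asymptotic ECDF $G$ lies strictly below the Simes line on all of $(0,1)$; use the functional CLT proved inside Theorem~\ref{theorem:conditionalFDPCLT} to get uniform consistency of the empirical subdistribution functions on compact subintervals of $(0,1)$; combine these with the deterministic bound $\tauBHm\le q$ to force $\tauBHm\xrightarrow{p}0$; and finally deduce $V_m/m\xrightarrow{p}0$ by monotonicity of $\hat F_{m,0}$. For the first step, note that $\tau_*=\sup\{t\in(0,1):G(t)\ge t/q\}$ with the supremum of the empty set read as $0$, and that any nonempty subset of $(0,1)$ has strictly positive supremum; hence $\tau_*=0$ forces that set to be empty, i.e. $G(t)<t/q$ for every $t\in(0,1)$, and so also $F_0(t)\le G(t)<t/q$ on $(0,1)$ since $F_1\ge0$. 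For the uniform consistency, I would invoke the part of the proof of Theorem~\ref{theorem:conditionalFDPCLT} that establishes the FCLT for $\big(\hat W_{m,0}(\cdot),\hat W_{m,1}(\cdot)\big)$ on its working interval; that argument uses only Conditions~\ref{cond:niceQandgamma}--\ref{cond:F0F1_defined}, \ref{cond:cov_kernel_converges} and \ref{cond:F0F1_fast_unif_conv} on the interval at hand, the remaining Conditions~\ref{cond:noaccum} and \ref{cond:F0F1_differfentiable} entering only in the later Hadamard-differentiability and delta-method steps. Applying it with $[a,b]$ replaced by an arbitrary compact $[\alpha,\beta]\subset(0,1)$ — whose hypotheses hold here because Conditions~\ref{cond:cov_kernel_converges} and \ref{cond:F0F1_fast_unif_conv} are assumed on all of $[0,1]\supseteq[\alpha,\beta]$ — gives weak convergence of $\big(\hat W_{m,0},\hat W_{m,1}\big)$ to a process with continuous sample paths on $[\alpha,\beta]$, so $\sup_{t\in[\alpha,\beta]}|\hat F_{m,r}(t)-F_r(t)|=m^{-1/2}\sup_{t\in[\alpha,\beta]}|\hat W_{m,r}(t)|=o_p(1)$ for $r\in\{0,1\}$, and hence $\sup_{t\in[\alpha,\beta]}|\hat G_m(t)-G(t)|\xrightarrow{p}0$.

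Next, to show $\tauBHm\xrightarrow{p}0$, recall from the construction of $R_m$ that $\tauBHm=P_{m(R_m)}\le R_mq/m\le q$, and that $\hat G_m(\tauBHm)\ge R_m/m\ge\tauBHm/q$ because the $R_m$ smallest $p$-values all lie at or below $\tauBHm$. Fix $\delta\in(0,q)$. On the compact interval $[\delta,q]\subset(0,1)$ the function $t\mapsto G(t)-t/q$ is continuous by Proposition~\ref{prop:G_is_continuous} and strictly negative by the first step, so $c_\delta\equiv-\max_{t\in[\delta,q]}\big(G(t)-t/q\big)>0$. On the event $\{\sup_{t\in[\delta,q]}|\hat G_m(t)-G(t)|<c_\delta\}$, which has probability tending to $1$, we have $\hat G_m(t)<G(t)+c_\delta\le t/q$ for all $t\in[\delta,q]$; if $\tauBHm>\delta$ then, since $\tauBHm\le q$, the point $t=\tauBHm$ would lie in $[\delta,q]$ yet satisfy $\hat G_m(\tauBHm)\ge\tauBHm/q$, a contradiction, so $\tauBHm\le\delta$ on that event. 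Thus $P(\tauBHm>\delta)\to0$ for every $\delta\in(0,q)$, i.e. $\tauBHm\xrightarrow{p}0$.

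Finally, to show $V_m/m\xrightarrow{p}0$, observe that $V_m/m=\hat F_{m,0}(\tauBHm)$ and that $\hat F_{m,0}$ is nondecreasing. Given $\epsilon>0$, pick $\delta\in(0,q)$ small enough that $F_0(\delta)<\epsilon/2$, which is possible because $0\le F_0(\delta)\le G(\delta)<\delta/q$. On the event $\{\tauBHm\le\delta\}\cap\{|\hat F_{m,0}(\delta)-F_0(\delta)|<\epsilon/2\}$ — whose probability tends to $1$ by the previous step together with the pointwise consistency $\hat F_{m,0}(\delta)\xrightarrow{p}F_0(\delta)$ (a special case of the uniform consistency above, or directly by Chebyshev using $\var\hat F_{m,0}(\delta)=c_m^{(0,0)}(\delta,\delta)/m\to0$ under Conditions~\ref{cond:F0F1_defined} and \ref{cond:cov_kernel_converges} and $\e\hat F_{m,0}(\delta)=F_{m,0}(\delta)\to F_0(\delta)$ under Condition~\ref{cond:F0F1_defined}) — we get $V_m/m=\hat F_{m,0}(\tauBHm)\le\hat F_{m,0}(\delta)<\epsilon$. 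Since $\epsilon>0$ was arbitrary and $V_m/m\ge0$, $V_m/m\xrightarrow{p}0$.

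The main obstacle is bookkeeping rather than conceptual: one has to confirm that the FCLT for $\big(\hat W_{m,0},\hat W_{m,1}\big)$ really does go through with Conditions~\ref{cond:noaccum} and \ref{cond:F0F1_differfentiable} dropped and on an arbitrary interior interval, which means re-reading the proof of Theorem~\ref{theorem:conditionalFDPCLT} to check that those two conditions enter only the Hadamard-differentiability and delta-method steps and not the finite-dimensional convergence or tightness arguments. A secondary subtlety is that $G$ need not be continuous at $0$ or $1$, which is the reason the consistency-based arguments above are run on interior intervals $[\delta,q]$ and the reason the vanishing of $F_0(\delta)$ as $\delta\downarrow0$ is read off from $F_0(\delta)<\delta/q$ rather than from continuity of $F_0$ at $0$.
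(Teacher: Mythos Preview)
Your proof is correct and follows the same high-level strategy as the paper's: deduce $G(t)<t/q$ on $(0,1)$, show the empirical CDF is close enough to $G$ that the BH threshold cannot land above any fixed $\delta>0$, then bound $V_m/m$. The execution differs in two places, though.

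First, where you invoke the full FCLT on interior intervals $[\alpha,\beta]\subset(0,1)$ to obtain uniform consistency of $\hat G_m$, the paper deliberately avoids this and uses only the f.d.d.\ convergence in Proposition~\ref{prop:fdd_convergence}. It lays down a finite grid $T_\delta\subset[\epsilon,q]$ of mesh $q\delta/2$, uses f.d.d.\ convergence to control $\hat G_m$ at the grid points, and then exploits monotonicity of $\hat G_m$ (and of $t\mapsto t/q$) to interpolate between grid points. This is lighter: it sidesteps your bookkeeping about whether the tightness argument in Theorem~\ref{theorem:FCLT_Pollard} really goes through without Conditions~\ref{cond:noaccum} and~\ref{cond:F0F1_differfentiable} and on an arbitrary interior interval. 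In fact the paper's remark immediately after Theorem~\ref{theorem:tau_star_0_res} points out that, because only Proposition~\ref{prop:fdd_convergence} is needed, Condition~\ref{cond:niceQandgamma} can be weakened to bare strong mixing---a robustness your FCLT route would not buy without further work, since the Andrews--Pollard chaining step uses the polynomial mixing rate in Condition~\ref{cond:niceQandgamma}.

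Second, for $V_m/m$ the paper gives a short purely combinatorial argument: if $\tauBHm\le q\epsilon/2$, then $V_m/m\le\hat G_m(q\epsilon)$, and $\hat G_m(q\epsilon)<\epsilon$ must hold because otherwise $\tauBHm\ge q\epsilon$. Your route via $\hat F_{m,0}(\delta)\xrightarrow{p}F_0(\delta)$ and $F_0(\delta)<\delta/q$ is also correct but uses an extra consistency statement that the paper's version does not need.
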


\begin{proof}
    See the \hyperref[sec:supp]{supplemental material} for a proof of this theorem.
\end{proof}

\begin{remark}
    Theorem \ref{theorem:tau_star_0_res} will still hold if we loosen the mixing Condition \ref{cond:niceQandgamma} and simply require that the error array $\{ \err_{mi} \such 1 \leq i \leq m < \infty \}$ has summable $\alpha$-mixing coefficients. This is because the proof of Proposition S.2.1 in the supplemental material \citep{KlugerAndOwen_Supplement} 
    does not require Condition \ref{cond:niceQandgamma} to hold and merely requires that the error array has summable $\alpha$-mixing coefficients.
\end{remark}

\begin{remark}\label{remark:FDP_not_to_0}
    When $\tau_*=0$ and $\tauBHm \xrightarrow{p} 0$, it is not guaranteed that  $\FDP_m \xrightarrow{p} 0$. For example, in the scenario where $\bm{L}_{mi}=0$ for all $m,i$, the errors $( \err_{m1},\dots , \err_{mm} )$ are independent and $\pi_1^{(m)}=1/m$, one can show that $\tau_*=0$ and $\tauBHm \xrightarrow{p} 0$, yet $\liminf_{m \rightarrow \infty} \Pr ( \FDP_m \geq 0.5) >0$. As another example, \cite{GontscharukFinner13} provide a scenario where $\tauBHm \xrightarrow{p} 0$, but asymptotically the false discovery rate exceeds the FDR control parameter $q$, implying that $\FDP_m$ cannot possibly converge to zero in probability in their scenario.

\end{remark}

In the above theorems, the limiting of behavior of BH depends on a latent factor which in practice is unobserved. Estimation of the unobserved latent factor is out of scope for this paper but for estimators of the latent factor and properties of these estimators we point the reader to \cite{FanHanGu}, \cite{AzrielAndSchwartzman}, \cite{sun2012multiple}, \cite{wang:zhao:hast:owen:2017} and \cite{FanFarmtest}. 
In the next section we focus on results for the case where the correlations are short-range and there is no factor model or latent factor to consider.

\section{Corollaries when there is no factor model component}\label{sec:no_factor_model}

The simplest applications of Theorems \ref{theorem:conditionalFDPCLT} and \ref{theorem:conditionalFPR_CLT} are to settings where there is no factor model component. That is $k=0$, or equivalently $\bm{L}_{mi}=0$ for $1\le i\le m$. Then the test statistics are  $X_{mi}=\mu_A H_{mi}+\err_{mi}$ where $(\err_{m1},\dots,\err_{mm}) \sim \dnorm(0, \Gamma^{(m)})$ for a correlation matrix $\Gamma^{(m)}\in\real^{m\times m}$. 
Next suppose, as is usual in the two-group mixture model that $\pi_1^{(m)}=\pi_1\in(0,1)$ for all $m\ge1$.
Finally, we will assume that the errors $(\err_{m1},\dots,\err_{mm})$ satisfy mixing Condition~\ref{cond:niceQandgamma}, which can hold, for example, if the errors are $M$-dependent (see Remark~\ref{remark:stationaryARMA_fast_enough} for other examples where Condition~\ref{cond:niceQandgamma} is met).

\sloppy Many of the \total{condition} conditions in our theorems hold trivially in this setting. Most trivially, $S_L=0$ making Condition \ref{cond:boundedL} hold. The mixing Condition \ref{cond:niceQandgamma} holds by assumption. Also in this setting, because $F_0(t)=F_{m,0}(t)=(1-\pi_1)t$ and $F_1(t)=F_{m,1}(t)=\pi_1 \bar{\Phi} ( \bar{\Phi}^{-1}(t) - \mu_A)$ for each $m$, Conditions \ref{cond:F0F1_defined} and  \ref{cond:F0F1_fast_unif_conv} on the subdistributions can be seen to hold.

To check that Condition \ref{cond:noaccum} ruling out pathologies about $\tau_*$ holds 
note that $G(t)=(1-\pi_1)t+\pi_1 \bar{\Phi} ( \bar{\Phi}^{-1}(t) - \mu_A)$. A simple calculation shows that $G'(t)=(1-\pi_1)+ \exp(\mu_A \bar{\Phi}^{-1}(t) -\mu_A^2/2)$ and 
$$G''(t)= -\pi_1 \exp(\mu_A \bar{\Phi}^{-1}(t) -\mu_A^2/2)/\varphi( \bar{\Phi}^{-1}(t)),$$ 
implying that $G$ is strictly concave on $(0,1)$ and that $G'(t) \rightarrow \infty$ as $t \downarrow 0$. By strict concavity of $G$, and since both $G$ and the Simes line intersect the origin, $\{ t > 0 \such G(t)=t/ q \}$ contains at most one point. It remains to show existence of a point $t>0$ with $G(t)=t/q$.
Since $G'(t) \rightarrow \infty$ as $t \downarrow 0$ and $G(0)=0$, there must be an $\epsilon>0$ such that $G(\epsilon)>\epsilon/q$. Also $G(1)=1 < 1/q$, so the continuous function $t \mapsto G(t)-t/q$ must cross $0$ at some unique $t_* \in (0,1)$. By continuity of $G$ and uniqueness this unique $t_*$ is the Simes point $\tau_*$ defined in \eqref{eq:Simes_point_def} and further Condition \ref{cond:noaccum} will be satisfied. Because $\tau_* \in (0,1)$ and because $F_0$ and $F_1$ are differentiable on $(0,1)$, Condition \ref{cond:F0F1_differfentiable} also holds.

The only remaining condition to check is Condition~\ref{cond:cov_kernel_converges} on convergence of the covariance kernels. Let $(a,b) \subset (0,1)$ be any 
open interval containing $\tau_*$ and $q$. Since for $r \in \{0,1\}$, $\gamma_{mir}$ does not vary with $i$ or $m$, the first limit in Condition~\ref{cond:cov_kernel_converges} always holds. Since for each $m,i$, $\gamma_{mi0}(t)=t$ and $\gamma_{mi1}(t)=\bar{\Phi} ( \bar{\Phi}^{-1}(t) - \mu_A)$, Condition~\ref{cond:cov_kernel_converges} holds whenever
\begin{align}\label{eq:defvarrho}\varrho(t,s)\equiv \lim_{m \rightarrow \infty} \frac1m\sum_{i \neq j} \tilde{\rho} ( t ,s, \Gamma_{ij}^{(m)} )
\end{align}
exists for all $s,t\in \big[a, \bar{\Phi} ( \bar{\Phi}^{-1}(b) - \mu_A) \big]$
with $\tilde\rho$ defined at~\eqref{eq:rhotilde}.
We summarize this along with an application of Theorem \ref{theorem:conditionalFDPCLT} in Corollary~\ref{cor:CLTnoFactor}.

\begin{corollary}\label{cor:CLTnoFactor}
    In the setting of Section \ref{sec:MultipleTestingSetup}, suppose further that: 
    \begin{enumerate}
    \item [\quad i)] the factor loadings $\bm{L}_{mi}$ are all zero and
    \item [\quad ii)] the probability $\pi_1\in(0,1)$ of nonnull hypotheses does not depend on $m$.
    \end{enumerate}
    Let $\tau_*$ be the unique $t \in (0,1)$ satisfying $t/q=\pi_0t+\pi_1 \bar{\Phi}( \bar{\Phi}^{-1}(t)- \mu_A)$ and let $(a,b) \subset (0,1)$ be any open interval containing both $\tau_*$ and $q$. If the $\err_{mi}$ are such that mixing Condition~\ref{cond:niceQandgamma} holds and the correlations among the $\err_{mi}$ are such that $\varrho(t,s)$ defined at~\eqref{eq:defvarrho} exists
    for all $t,s \in \big[a, \bar{\Phi} ( \bar{\Phi}^{-1}(b) - \mu_A) \big]$, 
    then 
    \begin{equation}
        \sqrt{m} \big( \emph{\FDP}_m- \pi_0 q \big) \xrightarrow{d} \dnorm \Big(0, \frac{\pi_0^2 q^2}{\tau_*^2} \Big(\frac{\tau_*}{\pi_0} -  \tau_*^2+ 
      \varrho(\tau_*,\tau_*)  \Big) \Big)
    \end{equation}
        where $\pi_0 \equiv 1- \pi_1$.

\end{corollary}

\begin{proof}
As discussed before the statement of the corollary, Conditions 1--\total{condition} hold in this setting. Noting that in this setting $\alpha=0$ and there is no dependency on the latent factor $\bswu$, the result holds by Theorem \ref{theorem:conditionalFDPCLT}.
\end{proof}

We note that this corollary will hold even if $q \notin (a,b)$ and the requirement that $q \in (a,b)$ was included for a cleaner proof of Theorem \ref{theorem:conditionalFDPCLT}. 

We also note that if the correlations between the test statistics are known, or even if only the first few moments of the test statistic pairwise correlations are known, the quantity $\varrho(\tau_*,\tau_*)$ can be computed efficiently using the first few terms in a Hermite polynomial expansion, as seen in Theorem 2 of \cite{SchwartzmanAndLin}. Below, we specialize Corollary~\ref{cor:CLTnoFactor} to settings with block diagonal correlations and with Toeplitz correlations. In these settings, Condition \ref{cond:niceQandgamma} will hold and $\varrho(\tau_*,\tau_*)$ will have an easily-expressed formula. 

\begin{corollary}[Block diagonal correlations]\label{cor:CLTBlock}
In the setting of Corollary~\ref{cor:CLTnoFactor}, suppose that 
    $(\err_{m1},\dots,\err_{mm})$
    has a block diagonal correlation matrix with blocks of fixed size $s_B$ in which the off diagonal correlations are $\rho_B$.
    Let $\tau_*$ be the unique $t \in (0,1)$ satisfying $t/q=\pi_0t+\pi_1 \bar{\Phi}( \bar{\Phi}^{-1}(t)- \mu_A)$. Then
    \begin{equation}
        \sqrt{m} \big( \emph{\FDP}_m- \pi_0 q \big) \xrightarrow{d} \dnorm \bigg(0, \frac{\pi_0^2 q^2}{\tau_*^2} \Big( \frac{\tau_*}{\pi_0} -  \tau_*^2+ 
      (s_B-1)  \tilde{\rho} ( \tau_* ,\tau_*, \rho_B ) \Big) \bigg)
     \end{equation} where $\pi_0 \equiv 1- \pi_1$ and $\tilde{\rho}$ is defined at~\eqref{eq:rhotilde}.
\end{corollary}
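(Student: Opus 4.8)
The plan is to recognize the block diagonal model as a special instance of the setting of Corollary~\ref{cor:CLTnoFactor} and then evaluate the only model-dependent ingredient in that corollary's variance, namely $\varrho_{00}(\tau_*,\tau_*)$.

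First I would check that the hypotheses of Corollary~\ref{cor:CLTnoFactor} are met. The factor loadings are zero and $\pi_1$ is constant by assumption, so it remains to verify $M$-dependence of $(\err_{m1},\dots,\err_{mm})$ and the existence of $\varrho_{r_0r_1}$ on a small open interval $(a,b)$ containing $\tau_*$. For a jointly Gaussian vector a block diagonal correlation matrix means the blocks are mutually independent; since any two indices with $|i-j|\ge s_B$ lie in distinct blocks, the array $\{\xi_{mi}\}$ is $(s_B-1)$-dependent, so we may take $M=s_B-1$. I would also note in passing that implicitly $\rho_B\in[-1/(s_B-1),1]$ for the block matrix to be a valid correlation matrix, and that $\tau_*$ is well defined and unique exactly as in the discussion preceding Corollary~\ref{cor:CLTnoFactor}.

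Next I would compute $\varrho_{r_0r_1}(t,s)=\lim_{m\to\infty}\frac1m\sum_{i\ne j}\tilde\rho(t,s,\Gamma^{(m)}_{ij})$ from~\eqref{eq:defvarrho}. For each fixed $i$ there are exactly $s_B-1$ indices $j\ne i$ in the same block (up to an $O(1)$ correction from at most one shorter block when $s_B\nmid m$), and for those $\Gamma^{(m)}_{ij}=\rho_B$; for every other $j$ we have $\Gamma^{(m)}_{ij}=0$, and $\tilde\rho(t,s,0)=0$ by~\eqref{eq:rhotilde} because zero correlation makes the two threshold indicators independent with expectations $t$ and $s$. Hence $\frac1m\sum_{i\ne j}\tilde\rho(t,s,\Gamma^{(m)}_{ij})=(s_B-1)\tilde\rho(t,s,\rho_B)+O(1/m)$, which converges (note the right side of~\eqref{eq:defvarrho} does not depend on $r_0,r_1$), giving in particular $\varrho_{00}(\tau_*,\tau_*)=(s_B-1)\tilde\rho(\tau_*,\tau_*,\rho_B)$.

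Finally I would substitute this value into the variance of Corollary~\ref{cor:CLTnoFactor}, namely $\tfrac{\pi_0 q^2}{\tau_*^2}\bigl(\tau_*-\pi_0\tau_*^2+\pi_0\varrho_{00}(\tau_*,\tau_*)\bigr)$, and factor $\pi_0$ out of the parentheses to rewrite it as $\tfrac{\pi_0^2 q^2}{\tau_*^2}\bigl(\tau_*/\pi_0-\tau_*^2+(s_B-1)\tilde\rho(\tau_*,\tau_*,\rho_B)\bigr)$, which is the claimed limiting variance. There is no real obstacle here; the only points requiring a little care are the boundary block when $m$ is not a multiple of $s_B$ (harmless after dividing by $m$) and the identification of $M=s_B-1$ for the $M$-dependence hypothesis.
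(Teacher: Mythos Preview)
Your proposal is correct and follows exactly the same approach as the paper, which simply states that the result follows from a direct application of Corollary~\ref{cor:CLTnoFactor}. You have supplied the natural details that make this application explicit: verifying $M$-dependence with $M=s_B-1$, computing $\varrho_{00}(\tau_*,\tau_*)=(s_B-1)\tilde\rho(\tau_*,\tau_*,\rho_B)$ via a direct count of same-block pairs (with the $O(1/m)$ boundary correction), and substituting into the variance formula.
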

\begin{proof}
This follows from a direct application of Corollary \ref{cor:CLTnoFactor}.
\end{proof}
The corollary as written requires $m$ to be a multiple of $s_B$, but it extends easily to $m\to\infty$ through an arbitrary sequence of $m$. One can let the ``last" block be smaller than the others if necessary.

Another simple correlation structure we can consider has banded Toeplitz correlation matrices for $\err_{m1},\dots,\err_{mm}$. 

\begin{corollary}[Toeplitz correlation]
\label{cor:SimpleToeplitz}
In the setting of Corollary~\ref{cor:CLTnoFactor}, suppose that 
$(\err_{m1},\dots,\err_{mm})$ has a Toeplitz correlation matrix $$\Gamma_{ij}^{(m)}=I\{i=j \}+ \sum_{\ell=1}^M \rho_l I \{ \vert i-j \vert=l \}$$
where $\rho_1, \dots, \rho_M \in (-1,1)$ are such that $\Gamma^{(m)}$ is positive semi-definite for all $m>M$. Let $\tau_*$ be the unique $t \in (0,1)$ satisfying $t/q=\pi_0t+\pi_1 \bar{\Phi}( \bar{\Phi}^{-1}(t)- \mu_A)$. Then,
\begin{equation}
        \sqrt{m} \big( \emph{\FDP}_m- \pi_0 q \big) \xrightarrow{d} 
        \dnorm\bigg(0, \frac{\pi_0^2 q^2}{\tau_*^2} \Big(\frac{\tau_*}{\pi_0} -  \tau_*^2+ 
     2\sum_{\ell=1}^M\tilde\rho(\tau_*,\tau_*,\rho_\ell)  \Big) \bigg)
     \end{equation} where $\pi_0 \equiv 1- \pi_1$ and  $\tilde{\rho}$ is defined at~\eqref{eq:rhotilde}.
\end{corollary}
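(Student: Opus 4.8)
The plan is to deduce this from Corollary~\ref{cor:CLTnoFactor}, the only genuine work being the evaluation of the limiting cross-term $\varrho_{00}(\tau_*,\tau_*)$ from~\eqref{eq:defvarrho} for a banded Toeplitz correlation matrix. First I would confirm that the hypotheses of Corollary~\ref{cor:CLTnoFactor} hold: the factor loadings vanish and $\pi_1$ is fixed by assumption, while the bandwidth-$M$ Toeplitz structure makes $(\err_{m1},\dots,\err_{mm})$ an $M$-dependent sequence for every $m>M$, so the mixing and subdistribution conditions hold as noted in the discussion preceding Corollary~\ref{cor:CLTnoFactor}; positive semidefiniteness of $\Sigma^{(m)}$ for $m>M$ is assumed, so the Gaussian vectors are well defined. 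It then only remains to check that $\varrho_{r_0r_1}(t,s)$ exists on a neighborhood $(a,b)$ of $\tau_*$, which will come out of the same computation.

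Next I would compute the limit in~\eqref{eq:defvarrho}. Because $\Gamma^{(m)}_{ij}=\rho_{|i-j|}$ for $1\leq|i-j|\leq M$ and $\Gamma^{(m)}_{ij}=0$ for $|i-j|>M$, and because $\tilde\rho(t,s,0)=0$ (two independent standard normals give $\Pr(\tilde\err_1\geq\bar\Phi^{-1}(t),\tilde\err_2\geq\bar\Phi^{-1}(s))=ts$, cancelling the $-ts$ in~\eqref{eq:rhotilde}), the sum over $i\neq j$ reduces to a sum over lags $\ell=1,\dots,M$. There are exactly $2(m-\ell)$ ordered pairs $(i,j)\in[m]^2$ with $|i-j|=\ell$, so for every fixed $t,s$,
$$\frac{1}{m}\sum_{i\neq j}\tilde\rho(t,s,\Gamma^{(m)}_{ij}) = 2\sum_{\ell=1}^M\frac{m-\ell}{m}\,\tilde\rho(t,s,\rho_\ell) \longrightarrow 2\sum_{\ell=1}^M\tilde\rho(t,s,\rho_\ell)$$
as $m\to\infty$; in particular the limit exists, and $\varrho_{00}(\tau_*,\tau_*)=2\sum_{\ell=1}^M\tilde\rho(\tau_*,\tau_*,\rho_\ell)$.

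Finally I would substitute this into the variance delivered by Corollary~\ref{cor:CLTnoFactor}, obtaining
$$\frac{\pi_0 q^2}{\tau_*^2}\Bigl(\tau_*-\pi_0\tau_*^2+\pi_0\cdot 2\sum_{\ell=1}^M\tilde\rho(\tau_*,\tau_*,\rho_\ell)\Bigr) = \frac{\pi_0^2 q^2}{\tau_*^2}\Bigl(\frac{\tau_*}{\pi_0}-\tau_*^2+2\sum_{\ell=1}^M\tilde\rho(\tau_*,\tau_*,\rho_\ell)\Bigr),$$
which is the asserted asymptotic variance. There is no real obstacle here: the substance is entirely in Corollary~\ref{cor:CLTnoFactor}, and the only points needing care are the bookkeeping of the lag counts (the harmless $(m-\ell)/m\to1$ edge correction) and the observation that lags beyond $M$ contribute nothing because $\tilde\rho(\cdot,\cdot,0)=0$.
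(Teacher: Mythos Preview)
Your proposal is correct and takes essentially the same approach as the paper, which simply states that the result follows from a direct application of Corollary~\ref{cor:CLTnoFactor}. You have merely made explicit the computation of $\varrho_{00}(\tau_*,\tau_*)$ that the paper leaves to the reader.
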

\begin{proof}
This follows from a direct application of Corollary \ref{cor:CLTnoFactor}.
\end{proof}

We check the CLTs provided by Corollaries \ref{cor:CLTBlock} and \ref{cor:SimpleToeplitz} via simulation in Figure \ref{fig:BlockAndToeplitzCLTConvergence}. We compare the normal approximation given by these CLTs to the normal approximation given by Corollary 4.2 in \cite{DR16}. We simulate block and banded correlation structures that do not satisfy the sufficient conditions in their Corollary 4.2. 

For each correlation structure and each $m \in \{10^2,10^3,10^4,10^5 \}$, we ran $25{,}000$ Monte Carlo simulations with $\mu_A=2$, $\pi_0=0.9$, and $q=0.1$.
The block correlation matrices we considered had 
block size $s_B=20$, and within block correlations $\rho_B =0.5$. 
The banded Toeplitz correlation matrix had $M=2$ with $\rho=\rho_1=0.65$ above and below the diagonal and $\rho=\rho_2=0.3$ two rows above and below the diagonal.
Our normal approximation fits very well for the larger values of $m$. For large $m$ the normal approximation from \cite{DR16} appears to accurately estimate the mean but not the variance of the FDP. From this we believe that something in their sufficient conditions must also have been necessary.
\begin{figure}[t]
\centering
\includegraphics[width=0.95\hsize]{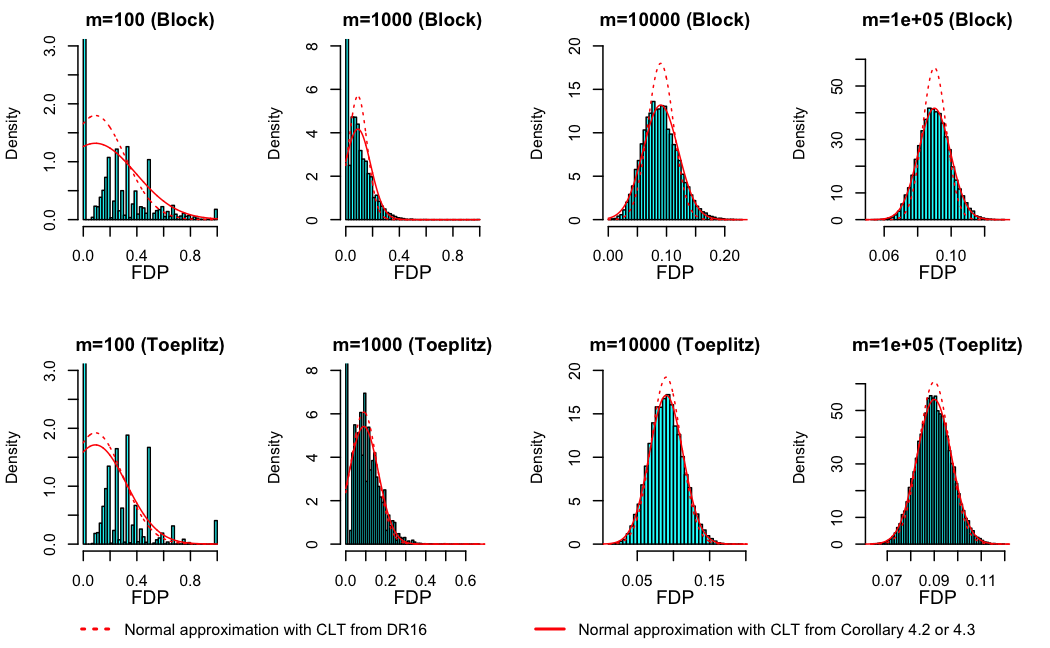}
\caption{\label{fig:BlockAndToeplitzCLTConvergence}  
This figure compares the histograms of FDP to our normal approximation and an earlier one by \cite{DR16} that does not necessarily cover these cases.  Each panel is based on $25{,}000$ Monte Carlo simulations as described in the text.
}
\end{figure}

\section{Burstiness in a factor model}\label{sec:Implications_under_factor_model}

The results in the previous section are CLTs that do not require conditioning on the latent factor as they assumed no long-range correlations modeled via a factor model. The CLTs are messier when factor model components are introduced, so we present two examples for factor model settings where the formulas for the asymptotic distribution of the $\FDP$ have some simplifications. 

\subsection{\texorpdfstring{$1$}{1}-factor model for long-range equicorrelated Gaussian noise}\label{sec:one_factor_model}

Suppose that for each $m$, $H_{m1},\dots,H_{mm} \simiid\dbern(\pi_1)$ for a fixed $\pi_1 \in (0,1)$, but we now have a one dimensional latent factor; that is $\bswu \sim \dnorm(0,1)$. For simplicity, we consider the simplest factor model structure: an equicorrelated Gaussian model. In particular, we let $\bm{L}_{mi} =\sqrt{\rho_1}$ where $\rho_1 \in [0,1)$ for all $m,i$. We will also allow for errors with shorter range correlations to be added to the model by supposing that $(\tilde{\err}_{m1},\dots,\tilde{\err}_{mm}) \sim \dnorm(0,\Gamma^{(m)})$ where $\Gamma^{(m)}$ is a correlation matrix with blocks of size $s_B$ and off diagonal within-block correlations of $\rho_2$. We assume that the blocks are of equal size, except the last one if $m$ does not divide $s_B$. In this model the test statistics are $X_{mi}=\mu_A H_{mi} +\sqrt{\rho_1} \bswu+ \sqrt{1-\rho_1} \tilde{\err}_{mi}$ and the correlation structure of the errors (not related to the indicators $H_{mi}$ of whether the hypotheses are true) follows a matrix $\Sigma_{B_2}$ where $$(\Sigma_{B_2})_{ij}=\begin{cases} 1, & \text{ if } i=j, \\ 
\rho_1, & \text{ if } i \text{ and } j \text{ are in different blocks}, \\ 
\rho_1 +(1-\rho_1) \rho_2, & \text{ if } i \neq j \text{ and if } i \text{ and } j \text{ are in the same block}. \end{cases}$$

In such a setting it is easy to show that all of our conditions, except possibly Condition~\ref{cond:noaccum} will hold.
Condition \ref{cond:noaccum} ruling out pathologies involving $\tau_*$ will hold depending on the value of $\bswu$ drawn from $\dnorm(0,1)$.
Some $\bswl$ may give $\tau_*=0$ though we do not expect a positive probability that $\tau_*$ will be an accumulation point of $\{t\such G(t)=t/q\}$.

\begin{corollary}\label{cor:eqGaussianWithBlock}

    In the multiple hypothesis testing setting of Section \ref{sec:one_factor_model}, condition on $\bswu=\bswl \in \real$ and for $t \in (0,1)$ let $G(t)=(1-\pi_1) \gamma_0(t)+\pi_1 \gamma_1(t)$ with $\gamma_r(t)= \Bar{\Phi} \big( (\Bar{\Phi}^{-1}(t)-\mu_A r-\sqrt{\rho_1} \bswl ) /\sqrt{1-\rho_1} \big)$ for $r \in \{0,1 \}$. If $\bswl$ is such that $G$ satisfies Condition~\ref{cond:noaccum} that the Simes point is positive with no pathologies, then 
    
    $$\sqrt{m} \Big( \emph{\FDP}_m - \frac{q \pi_0 \gamma_0(\tau_*)}{\tau_*} \giv \bswu= \bswl \Big) \xrightarrow{d} \dnorm(0, \sigma_{L,2}^2) \quad \text{as } m \rightarrow \infty$$ where $\pi_0 \equiv 1-\pi_1$ and

    $$\sigma_{L,2}^2 =  \frac{q^2}{\tau_*^2 } \Big( (1+\alpha)^2 c^{(0,0)}(\tau_*,\tau_*) + \alpha^2  c^{(1,1)}(\tau_*,\tau_*) +  2 \alpha(1+\alpha) c^{(1,0)}(\tau_*,\tau_*) \Big) $$
    where 
    $$\alpha = \frac{ \pi_0 \gamma_0'(\tau_*)-\pi_0 \gamma_0(\tau_*)/\tau_*}{1/q -G'(\tau_*)}$$
        and for $r \in \{0,1\}$, $$c^{(r,r)}(\tau_*,\tau_*)= \pi_{r} \gamma_{r}(\tau_*)-\pi_{r}^2 \gamma_{r}(\tau_*)\gamma_{r}(\tau_*)+ \pi_{r}^2 (s_B-1) \tilde{\rho}( \gamma_{r}(\tau_*),\gamma_{r}(\tau_*),\rho_2)$$ and $$c^{(1,0)}(\tau_*,\tau_*)= -\pi_{0} \pi_{1}\gamma_{0}(\tau_*)\gamma_{1}(\tau_*)+ \pi_{0} \pi_{1} (s_B-1) \tilde{\rho}( \gamma_{0}(\tau_*),\gamma_{1}(\tau_*),\rho_2)$$ where $\tilde{\rho}$ is defined in Equation~\eqref{eq:rhotilde}.

\end{corollary}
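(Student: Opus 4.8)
The plan is to invoke Theorem~\ref{theorem:conditionalFDPCLT} directly: the work is (a) checking that Conditions~\ref{cond:niceQandgamma}--\total{condition} all hold in the model of Section~\ref{sec:one_factor_model} conditional on $\bswu=\bswl$, and (b) evaluating $F_0(\tau_*)$, $\alpha$ and the $c^{(r_0,r_1)}(\tau_*,\tau_*)$ in closed form. Condition~\ref{cond:noaccum} is assumed. Two structural features do most of the work. First, the loading $\bm{L}_{mi}\equiv\sqrt{\rho_1}$ does not depend on $i$ or $m$, so $\gamma_{mir}(t)=\gamma_r(t)$ for all $i,m$ and hence $F_{m,r}(t)=\pi_r\gamma_r(t)=F_r(t)$ \emph{exactly}; this gives Condition~\ref{cond:F0F1_defined} and makes the $\sqrt m$ term in Condition~\ref{cond:F0F1_fast_unif_conv} identically zero. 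Since $\gamma_0,\gamma_1$ are smooth on $(0,1)$ and $\tau_*\in(0,1)$ under Condition~\ref{cond:noaccum}, Condition~\ref{cond:F0F1_differfentiable} holds, and $\Vert\bm{L}_{mi}\Vert_2^2=\rho_1<1$ gives $S_L=\rho_1<1$, i.e.\ Condition~\ref{cond:boundedL}. Second, the block correlation structure with fixed block size $s_B$ makes $\{\xi_{mi}=(\err_{mi},H_{mi})\}$ $M$-dependent with $M=s_B-1$: any two indices at distance at least $s_B$ lie in distinct blocks, so their (jointly Gaussian, zero-correlated) error coordinates and their $H$ coordinates are all independent. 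Thus $\alpha(d)=0$ and $\rho_*(d)=0$ for $d\ge s_B$, which yields Conditions~\ref{cond:niceQandgamma} (take e.g.\ $Q=4$, $\gamma=1$, so $\frac{\gamma}{2+\gamma}+\frac2Q=\frac56<1$ and the mixing series has only finitely many nonzero terms) and~\ref{cond:mixalot}.

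It remains to verify Condition~\ref{cond:cov_kernel_converges} on a small interval $[a,b]\ni\tau_*$. Its first limit equals $\pi_{r_0}\pi_{r_1}\gamma_{r_0}(t)\gamma_{r_1}(s)$, constant in $m$. For the second limit I use that $\Gamma^{(m)}_{ij}=\rho_2$ when $i\neq j$ are in a common block and $\Gamma^{(m)}_{ij}=0$ otherwise, together with $\tilde\rho(\cdot,\cdot,0)=0$ (a standard bivariate normal with zero correlation has independent coordinates, so the probability in~\eqref{eq:rhotilde} equals $ts$). Hence only within-block ordered pairs contribute, and there are $m(s_B-1)$ of them up to an $O(1)$ correction from a possibly shorter final block; dividing by $m$ gives the limit $\pi_{r_0}\pi_{r_1}(s_B-1)\tilde\rho(\gamma_{r_0}(t),\gamma_{r_1}(s),\rho_2)$. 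So Condition~\ref{cond:cov_kernel_converges} holds and Theorem~\ref{theorem:conditionalFDPCLT} applies.

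Finally I read off the conclusion. Here $F_0(\tau_*)=\pi_0\gamma_0(\tau_*)$, so the centering is $qF_0(\tau_*)/\tau_*=q\pi_0\gamma_0(\tau_*)/\tau_*$. Substituting $F_0=\pi_0\gamma_0$, $F_1=\pi_1\gamma_1$, $G=\pi_0\gamma_0+\pi_1\gamma_1$ into~\eqref{eq:Def_of_alpha} yields $\alpha$, using $F_0'(\tau_*)-F_0(\tau_*)/\tau_*=\pi_0(\gamma_0'(\tau_*)-\gamma_0(\tau_*)/\tau_*)$ and $G'(\tau_*)=\pi_0\gamma_0'(\tau_*)+\pi_1\gamma_1'(\tau_*)$. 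For the covariances, Proposition~\ref{prop:splitcov} gives $c^{(r_0,r_1)}(\tau_*,\tau_*)=c_{\mathrm{diag}}^{(r_0,r_1)}(\tau_*,\tau_*)+c_{\mathrm{cross}}^{(r_0,r_1)}(\tau_*,\tau_*)$; evaluating the diagonal term with $\gamma_{mir}\equiv\gamma_r$ gives $\pi_r\gamma_r(\tau_*)-\pi_r^2\gamma_r(\tau_*)^2$ when $r_0=r_1=r$ and $-\pi_0\pi_1\gamma_0(\tau_*)\gamma_1(\tau_*)$ when $(r_0,r_1)=(1,0)$, while the cross term equals $\pi_{r_0}\pi_{r_1}(s_B-1)\tilde\rho(\gamma_{r_0}(\tau_*),\gamma_{r_1}(\tau_*),\rho_2)$ by the computation above. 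Plugging these together with $\alpha$ into~\eqref{eq:Variance_ConditionalCLT} produces $\sigma_{L,2}^2$ and completes the proof.

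The only delicate bookkeeping is the non-integer case $s_B\nmid m$: one must confirm that permitting a shorter final block perturbs the within-block pair count in $c_{m,\mathrm{cross}}$, the $M$-dependence structure, and (trivially) the marginals $F_{m,r}$ only by amounts that vanish in the relevant limits. Everything else is direct substitution into Theorem~\ref{theorem:conditionalFDPCLT} and Proposition~\ref{prop:splitcov}, so I do not anticipate a serious obstacle.
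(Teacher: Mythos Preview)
Your proposal is correct and follows exactly the paper's approach: the paper's own proof simply asserts that Conditions~\ref{cond:niceQandgamma}--\ref{cond:F0F1_fast_unif_conv} (excepting Condition~\ref{cond:noaccum}, which is assumed) hold in this model and then invokes Theorem~\ref{theorem:conditionalFDPCLT}, while you have spelled out those verifications and the formula evaluations in full detail. One small point worth flagging: your own substitution $F_0'(\tau_*)-F_0(\tau_*)/\tau_*=\pi_0\bigl(\gamma_0'(\tau_*)-\gamma_0(\tau_*)/\tau_*\bigr)$ shows that the $\alpha$ of Theorem~\ref{theorem:conditionalFDPCLT} carries an extra factor of $\pi_0$ relative to the $\alpha$ displayed in the corollary statement, so be aware of that discrepancy when reconciling the two.
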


\begin{proof}
As mentioned earlier in this section, Conditions  \ref{cond:boundedL}, \ref{cond:niceQandgamma}, \ref{cond:F0F1_defined}, \ref{cond:cov_kernel_converges}, \ref{cond:F0F1_differfentiable}, and \ref{cond:F0F1_fast_unif_conv} will hold for any value of $\bswu$ drawn. Therefore the above result holds from applying Theorem \ref{theorem:conditionalFDPCLT} in the setting where Condition \ref{cond:noaccum} also holds.
\end{proof}

\begin{remark}
    In the setting of Corollary \ref{cor:eqGaussianWithBlock}, if $m/s_B$ is small, then perhaps the test statistic correlations can be modeled with a factor model with a bit more than $m/s_B$ factors but asymptotically such an approach would require adding infinitely many factors in the model. 
    In our setup, the equicorrelations $\rho_1$ are long-range and persist as $m \rightarrow \infty$ and hence they are modeled with a factor model whereas the additional noise with correlation blocks of size $s_B$ involves short-range correlations and are therefore not modeled as a factor.
\end{remark}

In this subsection, we have demonstrated that Theorems \ref{theorem:conditionalFDPCLT} and \ref{theorem:conditionalFPR_CLT} can be used to provide further insight into asymptotics of the BH procedure in the setting of Gaussian test statistics with constant pairwise correlation. In such a setting, \cite{FinnerDickhausRoters2007} find the limiting expected values of both the FDP and the FPR as functions of a one-dimensional latent factor. We have extended their results by deriving the limiting distribution of the FDP as a function of a one-dimensional latent factor (the limiting distribution of the FPR can similarly be derived from Theorem \ref{theorem:conditionalFPR_CLT}). We also considered a more general setting than the equicorrelated Gaussian model in order to exhibit that Theorems \ref{theorem:conditionalFDPCLT} and \ref{theorem:conditionalFPR_CLT} can handle settings with both short and long-range correlations simultaneously.

\subsection{Setting where number of nonnulls is \texorpdfstring{$o_p(\sqrt{m})$}{oprootm}}\label{sec:nonnullsOpRootm}

Here we consider sparse nonnulls
with $\pi_{1}^{(m)}=o(1/\sqrt{m})$. 
It can be shown with a Chernoff bound for the binomial that in this case the number of nonnulls is $o_p(\sqrt{m})$. Also suppose that under the model for test statistics of Section \ref{sec:MultipleTestingSetup}, Conditions 1-\total{condition} hold. Then it will follow that $F_1=0$ and moreover $W_{m,r} =\sqrt{m} ( \hat{F}_{m,1} -F_{m,1}) \xrightarrow{p} 0$. 
If this is the case, then we will have $c^{(1,1)}=0$, $c^{(1,0)}=0$, $F_0(\tau_*)=\tau_*/q$ and $\alpha =-1$.

\begin{corollary}\label{cor:sparse_nonnulls}
     Suppose that we are in the multiple testing setting of Section \ref{sec:MultipleTestingSetup} and that $\pi_{1}^{(m)}=o(1/\sqrt{m})$. If, conditionally on a specific value of the latent factor $\bswu=\bswl \in \real^k$, Conditions 1-\total{condition} hold, then $$\sqrt{m}\Big( \emph{\FDP}_m - 1 \giv \bswu=\bswl \Big) \xrightarrow{p} 0 $$ and $$\sqrt{m} \Big( \frac{V_m}{m} - \frac{\tau_*}{q} \giv \bswu=\bswl \Big) \xrightarrow{d} \dnorm(0, \sigma_{R,3}^2) \quad \text{as } m \rightarrow \infty$$ where $$\sigma_{R,3}^2 \equiv (1-q G'(\tau_*))^{-2} c^{(0,0)}(\tau_*,\tau_*).$$
\end{corollary}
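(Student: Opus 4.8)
The plan is to derive both conclusions directly from Theorems~\ref{theorem:conditionalFDPCLT} and~\ref{theorem:conditionalFPR_CLT}, whose hypotheses hold by assumption conditionally on $\bswu=\bswl$, by checking that the sparsity $\pi_1^{(m)}=o(1/\sqrt m)$ collapses each quantity appearing in those theorems to its claimed form. First I would observe that $0\le F_{m,1}(t)=\frac{\pi_1^{(m)}}{m}\sum_{i=1}^m\gamma_{mi1}(t)\le\pi_1^{(m)}\to0$ for every $t$, so $F_1\equiv0$ and $G=F_0$. Since Condition~\ref{cond:noaccum} is in force and $G$ is continuous by Proposition~\ref{prop:G_is_continuous}, $G(\tau_*)=\tau_*/q$, hence $F_0(\tau_*)=\tau_*/q$. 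Substituting $F_0=G$ and $F_0(\tau_*)/\tau_*=1/q$ into~\eqref{eq:Def_of_alpha} makes the numerator $G'(\tau_*)-1/q=-(1/q-G'(\tau_*))$, so $\alpha=-1$; likewise~\eqref{eq:Def_of_beta} gives $1+\beta=(1/q)/(1/q-G'(\tau_*))=1/(1-qG'(\tau_*))$. (The denominator $1/q-G'(\tau_*)$ is nonzero because $\alpha$ and $\beta$ are finite under the cited theorems' hypotheses.)

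Second, I would show $c^{(1,1)}\equiv0$ and $c^{(1,0)}\equiv0$. Because $0\le\hat F_{m,1}(t)\le\hat F_{m,1}(1)=\frac1m\sum_{i=1}^mH_{mi}$ and $\sum_iH_{mi}\sim\mathrm{Bin}(m,\pi_1^{(m)})$, one gets the crude bound $m\var(\hat F_{m,1}(t))\le m\,\e[\hat F_{m,1}(t)^2]\le\pi_1^{(m)}+m(\pi_1^{(m)})^2=\pi_1^{(m)}+(\sqrt m\,\pi_1^{(m)})^2\to0$. Recalling from the proof of Proposition~\ref{prop:splitcov} that $c_m^{(r_0,r_1)}(t,s)=m\,\cov(\hat F_{m,r_0}(t),\hat F_{m,r_1}(s))$, Cauchy--Schwarz then forces $c_m^{(1,1)}(t,s)\to0$ and $|c_m^{(1,0)}(t,s)|\le\sqrt{m\var(\hat F_{m,1}(t))}\,\sqrt{m\var(\hat F_{m,0}(s))}\to0$, the second factor staying bounded since $m\var(\hat F_{m,0}(s))=c_m^{(0,0)}(s,s)\to c^{(0,0)}(s,s)<\infty$ by Condition~\ref{cond:cov_kernel_converges}. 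Hence $c^{(1,1)}=c^{(1,0)}=0$. (This is the rigorous counterpart of the remark that $\pi_1^{(m)}=o(1/\sqrt m)$ makes the number of nonnulls $o_p(\sqrt m)$ by a Markov/Chernoff bound, so the nonnull empirical subdistribution is negligible at the $\sqrt m$ scale.)

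Finally I would substitute into the two theorems. Theorem~\ref{theorem:conditionalFPR_CLT} gives $\sqrt m(V_m/m-F_0(\tau_*)\mid\bswu=\bswl)\xrightarrow{d}\dnorm(0,\sigma_R^2)$; here $F_0(\tau_*)=\tau_*/q$, and dropping the now-vanishing $c^{(1,1)}$ and $c^{(1,0)}$ terms in~\eqref{eq:VarianceFPR_ConditionalCLT} leaves $\sigma_R^2=(1+\beta)^2c^{(0,0)}(\tau_*,\tau_*)=(1-qG'(\tau_*))^{-2}c^{(0,0)}(\tau_*,\tau_*)=\sigma_{R,3}^2$, which is the second display. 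Theorem~\ref{theorem:conditionalFDPCLT} gives $\sqrt m(\FDP_m-qF_0(\tau_*)/\tau_*\mid\bswu=\bswl)\xrightarrow{d}\dnorm(0,\sigma_L^2)$; here $qF_0(\tau_*)/\tau_*=1$, and with $\alpha=-1$ the factor $(1+\alpha)^2=0$ kills the $c^{(0,0)}$ and $c^{(1,0)}$ contributions to $\sigma_L^2$ in~\eqref{eq:Variance_ConditionalCLT} while $\alpha^2c^{(1,1)}=c^{(1,1)}=0$, so $\sigma_L^2=0$; since convergence in distribution to a point mass at $0$ is convergence in probability, $\sqrt m(\FDP_m-1\mid\bswu=\bswl)\xrightarrow{p}0$, the first display.

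Most of this is bookkeeping; the one spot needing care is the second paragraph, specifically $c^{(1,0)}=0$. The naive term-by-term bound on the cross part $c_{m,\mathrm{cross}}^{(1,0)}$ via $|\tilde\rho|\le\tfrac14$ only yields $O(m\pi_1^{(m)})$, which need not vanish under $\pi_1^{(m)}=o(1/\sqrt m)$; the remedy is to control the nonnull factor through its variance and pair it, via Cauchy--Schwarz, with the already-convergent $(0,0)$ kernel rather than bounding $\tilde\rho$ pointwise. Everything else follows by plugging the simplified $F_0(\tau_*)$, $\alpha$, $\beta$, $c^{(1,1)}$ and $c^{(1,0)}$ into Theorems~\ref{theorem:conditionalFDPCLT} and~\ref{theorem:conditionalFPR_CLT}.
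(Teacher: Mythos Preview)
Your proposal is correct and follows essentially the same approach as the paper: both derive the corollary by noting that $\pi_1^{(m)}=o(1/\sqrt m)$ forces $F_1\equiv0$, $F_0(\tau_*)=\tau_*/q$, $\alpha=-1$, $c^{(1,1)}=c^{(1,0)}=0$, and then substituting into Theorems~\ref{theorem:conditionalFDPCLT} and~\ref{theorem:conditionalFPR_CLT}. The paper's proof is a one-line assertion of these facts, whereas you supply the Cauchy--Schwarz argument for $c^{(1,0)}=0$ (and correctly flag why a naive termwise bound on $c_{m,\mathrm{cross}}^{(1,0)}$ would not suffice), which is a welcome addition of rigor but not a different route.
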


\begin{proof}
    Apply Theorems \ref{theorem:conditionalFDPCLT} and \ref{theorem:conditionalFPR_CLT} noting that $F_0(\tau_*)=\tau_*/q$, $\alpha=-1$, $c^{(1,1)}=0$, $c^{(1,0)}=0$.
\end{proof}

The corollary indicates that severe bursts can occur; $V_m/m$ can converge to a positive number even while the proportion of hypotheses that are nonnull converges to 0. 

We check the result of Corollary \ref{cor:sparse_nonnulls} via simulation in Figure \ref{fig:SparseNonnullLimitBehavior}. We simulate from the 1-factor model described in Section \ref{sec:one_factor_model}, except now the proportion of nonnulls $\pi_{1}^{(m)}$ is not fixed in $m$. Instead we set $\pi_{1}^{(m)}=5 m^{-2/3}$. 
For each $m \in \{10^2,10^3,10^4,10^5, 10^6 \}$, we conditioned on $\bswl=2.5$ and ran $25{,}000$ Monte Carlo simulations with $\mu_A=2$, $\pi_0^{(m)}=1-5 m^{-2/3}$, $q=0.1$, $\rho_1=0.3$, and $\rho_2=0.6$. For these choices of parameters, the conditions of Corollary \ref{cor:sparse_nonnulls} are met.

\begin{figure}
\centering

\includegraphics[width=0.95\hsize]{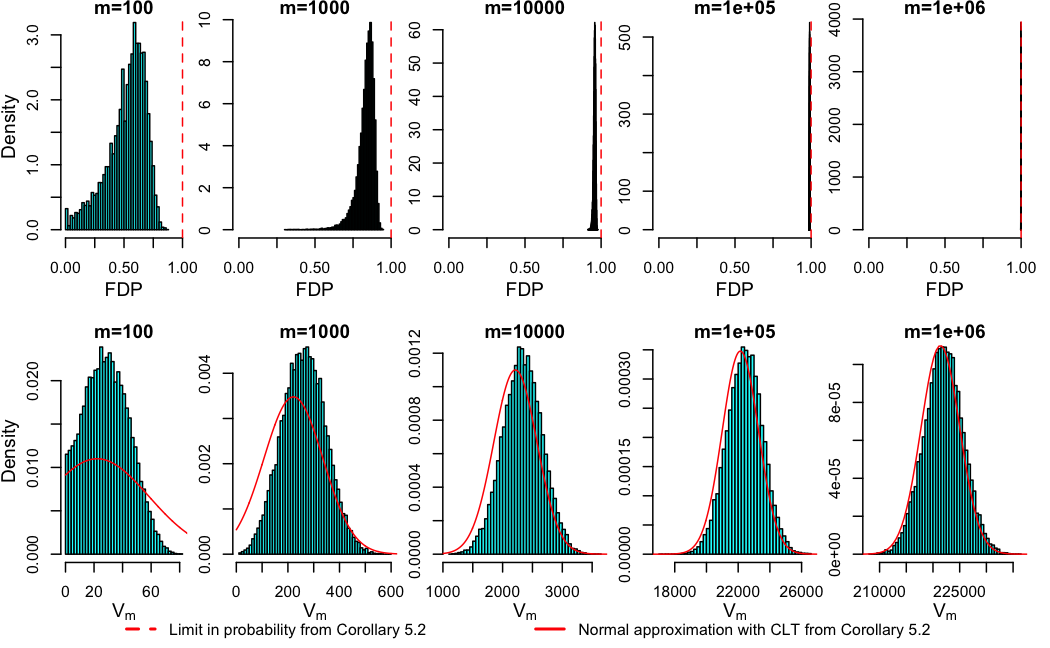}
\caption{ \label{fig:SparseNonnullLimitBehavior} 
This figure compares the histograms of the FDP and the number of discoveries $V_m$ to the asymptotic estimates of their distributions given by Corollary \ref{cor:sparse_nonnulls}.  Each panel is based on $25{,}000$ Monte Carlo simulations as described in the text.
}
\end{figure}

\section{Data driven factor model example}\label{sec:DataDrivenExample}
We fit a 3-factor model to the GDS 3027 Duchenne Muscular Dystrophy (DMD) data, which can be found on the Gene Expression Omnibus. This data set was analyzed in \cite{OtherDMD_dataset_use} and \cite{Jingshu2020DMD} 
and had $m=22{,}283$ genes and $n=37$ subjects. Of these subjects 23 had DMD and 14 did not.  We centered the data for each gene and stored it in a matrix $Y \in \real^{37 \times 22{,}283}$. 
To fit a homoskedastic factor model, we looked at the plot of the singular values of $Y$ and 
chose to work with the largest three of them for illustrative purposes. We then computed $Y_3$, the singular value decomposition-based rank $3$ approximation to $Y$, and estimated the homoskedastic noise, $\sigma_E$ as the standard deviation of the entries in $Y-Y_3$. 
We let $\tilde{L} \in \real^{m\times 3}$ be the matrix whose columns consist of the first 3 right singular vectors of $Y$ scaled by their corresponding singular values. 
We subsequently treat $\tilde{L}$ and $\sigma_E$ as fixed quantities and then assume the following factor model under the global null: $Y^\tran=\tilde{L}F+\sigma_E E$, where the entries of $F \in \real^{3 \times 37}$ and $E \in \real^{m\times 37}$ are IID standard Gaussians. Under the alternative we suppose that for each nonnull gene, the values of $Y$ for that gene are shifted by a fixed constant for DMD subjects and a different fixed constant, maintaining centering of the columns of $Y$, for the control subjects. 

Since the dataset is from a case-control study, to compute the test statistics we condition on DMD status and assume that the stochasticity in our observations $Y$ comes from the random matrices $E$ and $F$. The unstandardized test statistics $X_{\text{ustd}} \in \real^{m}$ are simply the difference-in-means between the DMD group and the control group for each gene and this unstandardized test statistics vector has covariance matrix proportional to $\sigma_E^2I_{m}+\tilde{L} \tilde{L}^\tran$. The standardized test statistics $X \in \real^{m}$ are given by dividing each entry of $X_{\text{ustd}}$ by the squareroot of the corresponding diagonal entry of $\sigma_E^2I_{m}+\tilde{L} \tilde{L}^\tran$. The vector of test statistics then satisfy $X= \vec{\mu} +\bm{L}\bswu+\varepsilon$ where $\vec{\mu}$ is a vector of constant means (which are zero for the null genes), $\bm{L}$ is a matrix of factor loadings similar to $\tilde{L}$ but with appropriately rescaled rows, $\bswu \sim \dnorm(0,I_3)$ and $\varepsilon$ is heteroskedastic, independent, and centered Gaussian noise. Assuming that each standardized nonnull test statistic has the same mean $\mu_A>0$, that we conduct one-sided testing, and that the nonnulls are determined by IID $\dbern(\pi_1)$ draws, using the test statistics $X$ we are in the multiple hypothesis testing setting of Section \ref{sec:MultipleTestingSetup}.

\begin{figure}[t]
\centering
\includegraphics[width=0.95 \hsize]{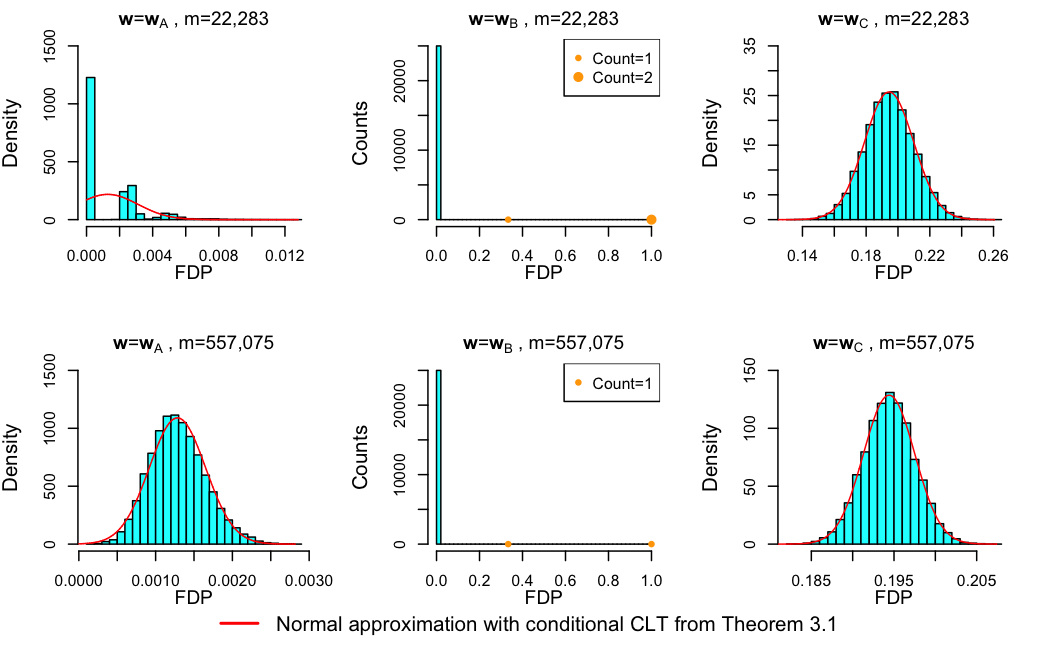}
\caption{\label{fig:Theorem1_DMDk3_convergence} 
The histograms show $25{,}000$ samples of the number of false discoveries in the two group model with a data-driven three factor model for dependence.
The 3-factor model has draws A, B and C as described in the text. The top row has $m=22{,}283$ hypotheses and the bottom has $25$ times as many hypotheses. The nominal FDR control threshold is $q=0.1$.  Draw C yields a higher FDP while draw A yields a lower one. In draw B, nearly all of the $25{,}000$ Monte Carlo simulations yielded an $\FDP=0$, and the small number of simulations for which $\FDP \neq 0$ are denoted by orange circles for visibility. The red curves for A and C are asymptotic Gaussians from Theorem~\ref{theorem:conditionalFDPCLT}. Case B does not satisfy the sufficient conditions for the conditional CLT, but satisfies the conditions for Theorem~\ref{theorem:tau_star_0_res}. 
}
\end{figure}

Figure~\ref{fig:SimesLinePoint_example}
in Section~\ref{sec:setup}
shows the asymptotic ECDF of the $p$-values for three specific realizations of the latent factor $\bswl$ in the data-driven 3-factor model and multiple testing setting described above, with $\mu_A=2$, $\pi_1=0.1$, and $q=0.1$. Figure~\ref{fig:Theorem1_DMDk3_convergence} shows histograms of the $\FDP$ based on $25{,}000$ Monte Carlo simulations for the same data-driven 3-factor model, multiple testing setup, factor outcomes, and parameters as Figure~\ref{fig:SimesLinePoint_example}.

In the top panel of Figure~\ref{fig:Theorem1_DMDk3_convergence}, $m=22{,}283$ as is the case in the original 3-factor model fit to the GDS 3027 dataset. In the bottom panel, to increase the number of tests and check asymptotic behavior, we copy each row of factor loadings in the original factor model $25$ times to get a distribution of the $\FDP$ when $m=22{,}283 \times 25$ tests are conducted. That is much larger than we would need for gene expression and approaches the range we would encounter for SNPs. In case A, the CLT is reasonable for the larger but not the smaller
sample size.  The CLT fits well for both sample sizes for case C.  In case B, the sufficient conditions for the conditional CLT do not hold and Theorem \ref{theorem:tau_star_0_res} holds instead.

This simulation shows some bursty behavior for BH as follows.  Cases A and C are both covered by the conditional CLT and there we see that even in cases covered by the conditional CLT, the FDP can vary greatly, being nearly Gaussian with means varying by nearly 100-fold. When cases like case B arise there is no conditional CLT, and by Theorem \ref{theorem:tau_star_0_res}, the BH rejection threshold converges to $0$ in probability. In case B, we observe a very heavy tail to the FDP distribution, although fewer than 1 out of every $5{,}000$ Monte Carlo simulations yields a nonzero FDP, and no simulation yields more than 1 false discovery.  In conclusion, the simulations are consistent with the results of Theorems \ref{theorem:conditionalFDPCLT} and \ref{theorem:tau_star_0_res}.

\begin{figure}[t]
\centering
\includegraphics[width=0.945 \hsize]{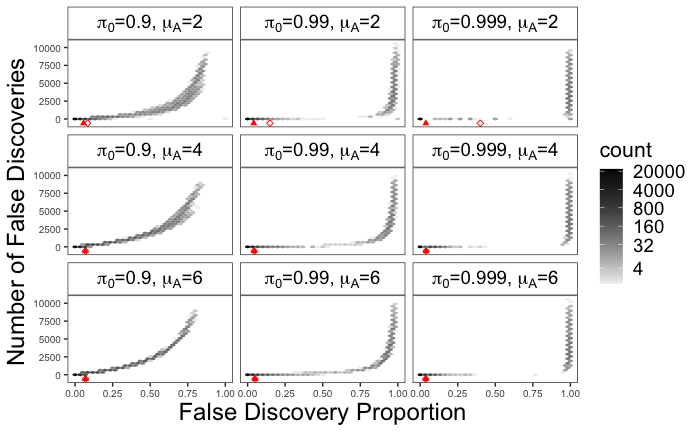}
\caption{\label{fig:DMDk3_Burstiness_Sims} 
The hexagonal-binned heatmaps each show the results of $25{,}000$ Monte Carlo simulations for the data-driven 3-factor model described in the text, using various nonnull effect sizes $\mu_A$ and proportions of nulls $\pi_0$. In each simulation, the BH method is applied at level $q=0.1$ to $m=22{,}283$ test statistics. For each plot, the mean of the FDP is marked with a solid triangle and the mean amongst nonzero $\text{FDP}$ values is marked with a hollow diamond, which estimate the $\text{FDR}$ and $\text{pFDR}$ respectively. For the plots with $\mu_A \in \{4,6\}$, the triangle and diamond are close enough to overlap.
}
\end{figure}

A large $\FDP$ tail is not necessarily indicative of alarming bursty behavior for BH, as the $\FDP$ can be equal to $1$ in scenarios where there is only one false discovery. Looking at the $\FDP$ simultaneously with the number of false discoveries $V_m$ gives a clearer sense of whether the bursty behavior is alarming. In Figure~\ref{fig:DMDk3_Burstiness_Sims}, we run ${25,000}$ Monte Carlo simulations using the previously described data-driven 3-factor model. In contrast to Figure~\ref{fig:Theorem1_DMDk3_convergence}, we do not condition on specific realizations of the latent factor $\bswl$ and we also plot the joint distribution of the $\FDP$ and the number of false discoveries rather than the marginal distribution of the $\FDP$. In the simulations, we set the FDR control parameter $q=0.1$ and repeat the simulations for the nonnull effect size $\mu_A \in \{2,4,6\}$ and for Bernoulli mixture null parameter $\pi_0 \in \{0.9,0.99,0.999\}$.

\begin{remark}
    Controlling pFDR using Storey's $q$-value is another popular multiple testing approach that is heralded for avoiding floods of false positives (\cite{StoreyTibsh}). Our simulations show that when the nonnull effect sizes are small and the number of nonnulls is sparse the pFDR will be high, implying that controlling the pFDR would mitigate the issue of burstiness in such cases. When there are many nonnulls or when the nonnull effect sizes are large, the pFDR is nearly equal to the FDR (due to few simulations with no discoveries), implying that controlling the pFDR would not mitigate burstiness in such cases. 
\end{remark}

\begin{remark}
For the data-driven 3-factor model with $m=22{,}283$, the distribution of $\FDP_m$ is largely driven by the realization of the latent factor $\bswl$. This can be seen in the top panel of Figure \ref{fig:Theorem1_DMDk3_convergence}, and we further quantified this by running $1{,}000$ Monte Carlo simulations from the model for each of $1{,}000$ different randomly generated latent factor vectors. The variance of the FDP between groups with different latent factors was approximately $325$ times larger than the average variance within each latent factor vector group.
\end{remark}

\section{Discussion}\label{sec:Discussion}

Here we discuss the conclusions that can be drawn from our theorems and simulations about when BH exhibits alarming burstiness and when BH is safe from burstiness concerns. We end with a discussion of the relevance and feasibility of factor model-based corrections for addressing burstiness concerns.

Burstiness occurs when there are many strong, long-range correlations between the test statistics. When we model the long-range correlations via a factor model, this phenomenon can be explained by Theorem \ref{theorem:conditionalFDPCLT}. By Theorem \ref{theorem:conditionalFDPCLT}, the asymptotic limit of $\FDP_m \giv \bswu$ is $qF_0(\tau_*)/\tau_*$, a quantity that can vary drastically for different realizations of $\bswu \sim \dnorm(0,I_k)$. The variation in $qF_0(\tau_*)/\tau_*$ is greater when the long-range correlations are stronger (or equivalently, when the factor model loading vectors $\bm{L}_{mi}$ have larger magnitude). Therefore, the FDP has high variability when there are many strong, long-range correlations between the test statistics. Meanwhile, by Theorem \ref{theorem:conditionalFPR_CLT}, there could be a flood of false discoveries, making the bursts severe. Our simulations from the 3-factor model fit to the DMD dataset indicate a wide right tail of the FDP distribution as well as severe bursts (see the top-left panel of Figure \ref{fig:Intro_burst4examples} and Figure \ref{fig:DMDk3_Burstiness_Sims}). Notably, we find that sparsity of the number of nonnulls exacerbates burstiness issues. This can be explained by Corollary \ref{cor:sparse_nonnulls} and is observed in Figures \ref{fig:SparseNonnullLimitBehavior} and \ref{fig:DMDk3_Burstiness_Sims}.

Conversely, our theorems and simulations indicate that there are many settings where the test statistics are correlated, but the BH procedure is free of burstiness concerns. When there are no long-range correlations, no factor model is needed to model the correlations, so the variance of the FDP will decrease rapidly as the number of tests increases, even when the short-range correlations are strong. For example, in the setting of Corollary \ref{cor:CLTnoFactor}, $\FDP_m$ converges to a quantity less than the desired FDR control $q$ and has variance of order $1/m$, even with strong short-range correlations. The simulations in the bottom of Figure \ref{fig:Intro_burst4examples} and all the panels of Figure \ref{fig:BlockAndToeplitzCLTConvergence} involve strong short-range correlations and still demonstrate this desirable behavior (the desirable behavior is not seen in the panels of Figure \ref{fig:BlockAndToeplitzCLTConvergence} where $m$ is small because, in that case, the ``short-range" correlations are actually long-range relative to the number of tests). Even when there are long-range correlations but the long-range correlations are weak, modeled by a factor model with loading vectors $\bm{L}_{mi}$ of small magnitude, the BH procedure will not exhibit worrisome bursts. With loading vectors $\bm{L}_{mi}$ with small magnitude, the $\gamma_{mir}$ terms will not be sensitive to the realized value of $\bswu \sim \dnorm(0,I_k)$, and in turn $F_0,F_1, \tau_*,$ and $qF_0(\tau_*)/\tau_*$ will also not be so sensitive to the realized value of $\bswu$. Therefore, when the loading vectors $\bm{L}_{mi}$ have small magnitude, the asymptotic limit of $\FDP_m \giv \bswu$ given in Theorem \ref{theorem:conditionalFDPCLT} will not oscillate much as $\bswu \sim \dnorm(0,I_k)$ varies. Indeed, in Figure \ref{fig:Intro_burst4examples}, when the long-range correlations are all reduced by a factor of 10 (as we move from the top-left panel to the top-right panel), alarming burstiness is no longer observed.

These results suggest that estimating the correlation structure with a factor model can be useful for identifying whether or not burstiness is a concern in particular applications. The results further suggest that methods which estimate and remove the factor model components from the test statistics prior to applying BH (e.g. methods that estimate $\bswu$ and $(\bm{L}_{mi} )_{i=1}^m$, subtract $\bm{L}_{mi}^\tran \bswu$ from each test statistic, and subsequently apply BH) can alleviate burstiness issues. A number of such approaches for estimating and removing factor model components in multiple testing settings have been proposed and have shown promise in simulations \citep{frig:kloa:casu:2009,sun2012multiple,FanHanGu,wang:zhao:hast:owen:2017,FanFarmtest}. 

While a full discussion of these recent methods is out of scope for this paper, we briefly note that there are two major challenges with estimation and removal of factor model components in a multiple testing setting. First, it is possible that removing the factor model components from the test statistics might remove some of the important signal that one is trying to detect with hypothesis testing. For example, this can happen if a large collection of genes is associated with one of the leading factors, yet at the same time, that collection of genes is also associated with the outcome variable. To avoid such issues, methods which remove the factor model components often rely upon an assumption that the number of nonnulls is sparse. Second, estimating the underlying factor model for a dataset is statistically challenging. It is difficult to estimate the latent factors $\bswu$ and the factor loadings $(\bm{L}_{mi} )_{i=1}^m$ well without a large number of samples, and even choosing the number of latent factors $k$ is a difficult task. For a comparison of methods for estimating the number of latent factors, see \cite{OwenWangBiCrossVal}.

\begin{acks}[Acknowledgments]
 The authors wish to thank Will Fithian, Kevin Guo, Grant Izmirlian, Lihua Lei, Kenneth Tay, Marius Tirlea, Jingshu Wang, and anonymous reviewers for helpful comments and discussions. The authors also thank Kevin Guo for a proof sketch on how to remove a condition from an earlier version of this paper and Jingshu Wang for sharing the DMD data. Finally, the authors would like to thank two anonymous referees, an Associate Editor and the Editor for their constructive comments that helped improve the quality of this paper.
\end{acks}

\begin{funding}
DMK was supported by a Stanford Graduate Fellowship and a Stanford Graduate Interdisciplinary Fellowship. ABO was supported by the National Science Foundation under grants IIS-1837931 and DMS-2152780.
\end{funding}

\bibliographystyle{imsart-nameyear.bst}
\bibliography{FDP_CLT}

\pagebreak

\begin{supplement}\label{sec:supp}
\stitle{Proofs of Theorems \ref{theorem:conditionalFDPCLT}, 
\ref{theorem:conditionalFPR_CLT}, and \ref{theorem:tau_star_0_res}}
\sdescription{A supplement with the proofs of Theorems \ref{theorem:conditionalFDPCLT}, \ref{theorem:conditionalFPR_CLT}, and \ref{theorem:tau_star_0_res} can be found below. The first section of the supplement restates definitions and theorems from the probability literature that we use in our proofs. The second section of the supplement provides proofs of Theorems \ref{theorem:conditionalFDPCLT}, \ref{theorem:conditionalFPR_CLT}, and \ref{theorem:tau_star_0_res}. The third section of the supplement exhibits Hadamard derivative calculations that are used in our proofs of Theorems \ref{theorem:conditionalFDPCLT} and  \ref{theorem:conditionalFPR_CLT}. }
\end{supplement}


\counterwithin{theorem}{section}
\counterwithin{lemma}{section}
\counterwithin{definition}{section}
\counterwithin{proposition}{section}
\counterwithin{remark}{section}
\counterwithin{corollary}{section}
\setcounter{section}{0}
\setcounter{equation}{0}

\renewcommand{\thesection}{S.\arabic{section}}   
\renewcommand{\theequation}{S\arabic{equation}}   

\section{
Background needed to prove Theorems \ref{theorem:conditionalFDPCLT}, \ref{theorem:conditionalFPR_CLT} and \ref{theorem:tau_star_0_res}}

This section contains results and definitions that we draw upon in order to prove our theorems. They come from \cite{Neumann13}, \cite{HallAndHyde}, \cite{AndrewsAndPollard} and \cite{VDV}.

\subsection{Theorems for proving  \texorpdfstring{$\big(W_{m,0}(\cdot),W_{m,1}(\cdot) \big)$}{theEP} converges in f.d.d.}

\cite{Neumann13} gives a CLT for triangular arrays of dependent random variables. Neumann's CLT in conjunction with the Cramér Wold device, will later be used to establish f.d.d. convergence of our empirical process $\big(W_{m,0}(\cdot),W_{m,1}(\cdot) \big)$ from~\eqref{eq:defws} to a joint Gaussian process. Below, we restate results from Theorem 2.1 in \cite{Neumann13} with notation convenient for us.

\begin{theorem}\label{theorem:NeumannCLT} (Neumann's CLT) Let $\{Y_{mi} \such 1 \leq i \leq m < \infty \}$ be a triangular array of centered random variables with $\sup_{m \in \mathbb{N}} \sum_{i=1}^m \e (Y_{mi}^2) < \infty$. Further suppose that both
\begin{equation}\label{eq:NeumannCond1}
    \sigma_m^2 \equiv \var \Big( \sum_{i=1}^m Y_{mi} \Big) \to \sigma^2 \in [0, \infty) \quad \text{as} \quad m \to \infty, \text{ and }
\end{equation} \begin{equation}\label{eq:NeumannCond2}
    \lim\limits_{m \to \infty} \sum_{i=1}^m \e \Big( Y_{mi}^2 I \{ \vert Y_{mi} \vert > \epsilon \} \Big)=0 \quad \text{for any } \epsilon>0.
\end{equation} Furthermore, suppose there exists a summable sequence of real numbers $\{ \theta_d \}_{d \in \mathbb{N}}$ such that for any $u,d,m \in \mathbb{N}$ and any $j_1,\dots, j_u, b_1,b_2 \in \mathbb{N}$ satisfying $1 \leq j_1 < j_2 < \dots < j_u < j_u+d =b_1 \leq b_2 \leq m$, the following two covariance upper bounds both hold for any measurable function $g : \real^u \to \real$ with $\sup_{x \in \real^u} \vert g(x) \vert \leq 1$:

\begin{enumerate}
    \item[(i)] $\big| \cov \big( g(Y_{mj_1}, \dots, Y_{mj_u} ) Y_{mj_u} , Y_{mb_1} \big) \big| \leq \big( \e(Y_{mj_u}^2) + \e(Y_{mb_1}^2)+1/m \big) \theta_d $, and
    \item[(ii)] $\big| \cov \big( g(Y_{mj_1}, \dots, Y_{mj_u} )  , Y_{mb_1}Y_{mb_2} \big) \big| \leq \big( \e(Y_{mb_1}^2) + \e(Y_{mb_2}^2)+1/m \big) \theta_d $.
\end{enumerate} Then $\sum_{i=1}^m Y_{mi} \xrightarrow{d} \dnorm(0,\sigma^2)$.

\end{theorem}

To check the covariance upper bound conditions in Neumann's CLT, it will be helpful to use Theorem A.5 in \cite{HallAndHyde}. Below, we restate Theorem A.5 in \cite{HallAndHyde} with notation convenient for us.

\begin{theorem}\label{theorem:HallandHydeA5}
    Suppose that $U$ and $V$ are random variables which are $\mathcal{G}$-measurable and $\mathcal{H}$-measurable, respectively, and that almost surely both $\vert U \vert \leq C_0$ and $\vert V \vert \leq C_1$. Then \begin{align}\label{eq:HallHydeIneq}
        \big| \cov(U,V) \big| \leq 4 C_0 C_1 \sup_{\substack{A_0 \in \mathcal{G} \\ A_1 \in \mathcal{H}}} \big| \Pr(A_0 \cap A_1) - \Pr (A_0) \Pr(A_1) \big|.
    \end{align}
\end{theorem}

\subsection{Results and definitions from \texorpdfstring{\cite{AndrewsAndPollard}}{Andrews and Pollard (1994)}}

In order to obtain an FCLT from f.d.d. convergence we will need to introduce some definitions and results from \cite{AndrewsAndPollard}. \cite{AndrewsAndPollard} use a chaining argument to provide an FCLT for bounded function classes. Their FCLT does not require independence.

We first introduce some definitions from their paper. Let $\{ \Xi_{mi} : 1 \leq i \leq m < \infty \}$ be a triangular array of $\cy$-valued random elements of a measurable space. Define $\{ \tilde{\alpha}(d) \}_{d=1}^{\infty}$ to be the sequence of $\alpha$-mixing coefficients for this array. We use $\mathcal{F}$ to denote a collection of bounded functions from $\cy$ to $\mathbb{R}$.

\begin{definition}\label{def:rho_semi_norm}
   The seminorm function for the array $\{ \Xi_{mi} : 1 \leq i \leq m < \infty \}$  of $\cy$-valued random variables is the function $\seminorm$ that maps any real valued function $f$ on $\cy$, to a real number via the following equation
\begin{equation}
    \seminorm(f)= \sup_{m,i} \sqrt{ \e \big( f( \Xi_{mi})^2 \big)}.
\end{equation} This $\seminorm(\cdot)$ can be used to define a seminorm on a collection of functions on $\cy$.
\end{definition}

Before stating their FCLT we also restate Definition 2.1 of \cite{AndrewsAndPollard} which gives the bracketing number.

\begin{definition}\label{def:Pollard_bracketing_numbers}
    Let $\mathcal{F}$ be a function class and $\seminorm$ be the seminorm function for an array of random variables $\{ \Xi_{mi} : 1 \leq i \leq m < \infty \}$, as in Definition \ref{def:rho_semi_norm}. For $\delta>0$ the bracketing number $N(\delta)=N(\delta, \mathcal{F},\seminorm)$ is the smallest natural number $N$ for which there exists functions $f_1,\dots,f_N \in \mathcal{F}$ and functions $b_1,\dots,b_N : \cy \rightarrow \mathbb{R}$ with $\seminorm(b_i) \leq \delta$ for all $i \in \{1,\dots,N\}$ such that for each $f \in \mathcal{F}$, there exists a $j\in\{1,\dots,N\}$ such that $\vert f -f_j \vert \leq b_j$. Here, $\vert f -f_j \vert \leq b_j$ means that $\vert f( \Xi)- f_j(\Xi) \vert \leq b_j(\Xi)$ for all $\Xi \in \cy$, and we will use this shorthand notation in the proof of Theorem \ref{theorem:FCLT_Pollard}. 

\end{definition}

To relate the function class $\mathcal{F}$ to an empirical process we introduce the following operator $\nu_m$.

\begin{definition}\label{def:indexing_operator}
Let $\mathcal{F}$ be a class of real valued functions whose domain contains the support of each individual entry in the array $\{ \Xi_{mi} \such 1 \leq i \leq m < \infty \}$. For any $m \in \mathbb{Z}_{+}$ and any $f \in \mathcal{F}$, define \begin{equation}
    \nu_m f \equiv \frac{1}{\sqrt{m}} \sum_{i=1}^m \big( f(\Xi_{mi})- \e \big( f(\Xi_{mi} ) \big) \big).
\end{equation} 
\end{definition}

This operator $\nu_m$ maps individual functions in $\mathcal{F}$ to centered and renormalized random variables. It defines an empirical process indexed by the elements of $\mathcal{F}$.
We are now ready to restate the FCLT given in Corollary 2.3 of \cite{AndrewsAndPollard}.

\begin{theorem}\label{theorem:FCLT_AndrewsPollard}
    Let $\{ \Xi_{mi} : 1 \leq i \leq m < \infty \}$ be a strongly mixing triangular array whose $\alpha$-mixing coefficients $\{ \tilde{\alpha}(d) \}_{d=1}^{\infty}$ satisfy 
    \begin{equation}
        \sum_{d=1}^{\infty} d^{Q-2} \tilde{\alpha}(d)^{\gamma/(Q+\gamma)} < \infty
    \end{equation}
    for some even integer $Q \geq 2$ and some $\gamma>0$. Let $\seminorm$ be the seminorm function for the array $\{ \Xi_{mi} : 1 \leq i \leq m < \infty \}$ as given in Definition \ref{def:rho_semi_norm}, and let $\mathcal{F}$ be a uniformly bounded class of real-valued functions whose bracketing numbers (Definition \ref{def:Pollard_bracketing_numbers}) satisfy \begin{equation}
        \int_0^1 x^{-\gamma/(2+\gamma)} N(x,\mathcal{F},\seminorm)^{1/Q} dx < \infty
    \end{equation} 
    for the same $Q$ and $\gamma$. Finally, let $\nu_m$ be the operator from Definition \ref{def:indexing_operator}. If for all $k \in \mathbb{Z}_+$ and $f_1,\dots,f_k \in \mathcal{F}$, $(\nu_m f_1,\dots,\nu_m f_k)$ converges to a multivariate Gaussian distribution as $m \to \infty$, then $\{\nu_m f \such f \in \mathcal{F} \}$ converges in distribution to a Gaussian process indexed by $\mathcal{F}$ with $\seminorm$-continuous sample paths.
\end{theorem}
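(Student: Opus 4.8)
The plan is to obtain the stated functional central limit theorem as a translation of Corollary~2.3 of \cite{AndrewsAndPollard} into our notation, so the proof is essentially a verification that the hypotheses we assume coincide with the hypotheses of that corollary. First I would fix the dictionary between the two papers: their triangular array is our $\{\Xi_{mi}:1\le i\le m<\infty\}$, their $\alpha$-mixing numbers are our $\{\tilde\alpha(d)\}_{d=1}^{\infty}$, the $L_2$-type pseudometric they bracket against is the one induced by our seminorm $\seminorm$ from Definition~\ref{def:rho_semi_norm}, their bracketing numbers coincide with our $N(\cdot,\mathcal{F},\seminorm)$ from Definition~\ref{def:Pollard_bracketing_numbers}, and the centered, $\sqrt{m}$-normalized empirical process they study is precisely our operator $\nu_m$ from Definition~\ref{def:indexing_operator}. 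Under this dictionary the three quantitative hypotheses of the theorem --- uniform boundedness of $\mathcal{F}$, the mixing-size bound $\sum_{d} d^{Q-2}\tilde\alpha(d)^{\gamma/(Q+\gamma)}<\infty$, and the bracketing-entropy integral $\int_0^1 x^{-\gamma/(2+\gamma)}N(x,\mathcal{F},\seminorm)^{1/Q}\rd x<\infty$ --- match the assumptions of their Theorem~2.2 and Corollary~2.3 verbatim, and their f.d.d.-convergence hypothesis is the same one we impose.

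Next I would record the two structural facts that make the index set well behaved. Convergence of the entropy integral forces $N(\delta,\mathcal{F},\seminorm)<\infty$ for every $\delta>0$ (otherwise, since $N(\cdot)$ is non-increasing, the integrand is infinite on a set of positive measure), and any bracket with $\seminorm(b_j)\le\delta$ satisfies $|f-f_j|\le b_j$ pointwise, which gives $\seminorm(f-f_j)\le\seminorm(b_j)\le\delta$; hence the bracket centers form a $\delta$-net and $(\mathcal{F},\seminorm)$ is totally bounded. With total boundedness available, Corollary~2.3 of \cite{AndrewsAndPollard} supplies stochastic equicontinuity of $\{\nu_m f:f\in\mathcal{F}\}$ with respect to $\seminorm$, and combining this with the assumed convergence of every finite-dimensional vector $(\nu_m f_1,\dots,\nu_m f_k)$ to a multivariate Gaussian --- the standard characterization of weak convergence in $\ell^{\infty}(\mathcal{F})$ as marginal convergence together with asymptotic equicontinuity over a totally bounded index set, as carried out internally in \cite{AndrewsAndPollard} and recorded in standard weak-convergence references such as \cite{VDV} --- yields convergence in distribution to a tight Borel limit. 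That limit is Gaussian, its covariance is the limit of the finite-dimensional covariances, and tightness over $(\mathcal{F},\seminorm)$ forces its sample paths to be almost surely uniformly $\seminorm$-continuous, which is the asserted conclusion.

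The part deserving the most care, and where most of the actual write-up would go, is confirming that the technical form of the conditions in \cite{AndrewsAndPollard} genuinely coincides with ours rather than merely resembling it: checking that their mixing requirement is stated for $\alpha$-mixing and not for $\phi$- or $\rho$-mixing, that no envelope or moment condition beyond uniform boundedness is needed, and that the measurability and separability requirements they impose hold automatically in every application we make of the theorem, because there $\mathcal{F}$ is indexed by a compact subset of a Euclidean space through maps that are monotone in the index (hence with only countably many discontinuities in the index), so no outer-expectation subtleties arise. A minor additional point, already noted after Condition~\ref{cond:niceQandgamma}, is that taking $Q$ to be an even integer is exactly what licenses the direct appeal to their Theorem~2.2 as stated, so no extension of their argument to rational $Q$ is required here.
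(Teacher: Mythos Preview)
Your proposal is correct and matches the paper's treatment: the paper does not prove this theorem at all but simply restates Corollary~2.3 of \cite{AndrewsAndPollard} in its own notation, so the ``proof'' is precisely the dictionary-matching exercise you describe. Your write-up actually goes further than the paper does, supplying the total-boundedness and equicontinuity argument that Andrews and Pollard carry out internally; the paper is content to cite the result.
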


We will now  clarify what $\seminorm$-continuous sample paths and ``convergence in distribution" in the above theorem mean. We start by clarifying what $\seminorm$-continuity of sample paths for a Gaussian process means.

\begin{definition}\label{def:AP_rho_continuous}
    Let $\mathcal{F}$ be a function class, let $\seminorm$ be a seminorm, and let $W_{\mathcal{F}}$ be a Gaussian process indexed by $\mathcal{F}$, defined on the probability space $\Omega$. Note that for each $f \in \mathcal{F}$, $W_{\mathcal{F}}(f)$ is a real valued random variable with a univariate normal distribution, and for a fixed $\omega \in \Omega$, $W_{\mathcal{F}}(f)(\omega) \in \real$ is a specific realization of that random variable. Fixing $\omega \in \Omega$, we say that $W_{\mathcal{F}}(\cdot)(\omega)$ has a $\seminorm$-continuous sample path at $f$ 
    if for all $\epsilon>0$, there exists an $\eta>0$ such that $\vert W_{\mathcal{F}}(f)(\omega) - W_{\mathcal{F}}(g)(\omega)\vert < \epsilon$ holds for any $g\in\cf$ with $\seminorm(f-g) < \eta$. We say $W_{\mathcal{F}}$ has $\seminorm$-continuous sample paths, if for every $\omega \in \Omega$ and $f \in \mathcal{F}$, $W_{\mathcal{F}}(\cdot)(\omega)$ has a $\seminorm$-continuous sample path at $f$.
\end{definition}

For a fixed $m$, the process $\{\nu_m f \such f \in \mathcal{F} \}$ is not guaranteed to be Borel measurable. Chapter 9 of \cite{PollardTextbook} raises caution about measurability issues of such empirical process and page 9 of \cite{dudley_1999} provides an example showing that the empirical process of IID $\dunif[0,1]$ random variables is not guaranteed to be measurable with respect to the $\sup$-norm topology. To skirt around these measurability issues, we will use a modified notion of convergence provided in Definition 9.1 of \cite{PollardTextbook}. Introducing the modified notion of convergence allows us to use the FCLT from \cite{AndrewsAndPollard} (reproduced in Theorem \ref{theorem:FCLT_AndrewsPollard}) and ultimately allows us to extend upon the results of \cite{DR16}, which use a different FCLT. 

We reproduce the modified definition of convergence in distribution below. Some readers might prefer to skip this definition and to instead treat everything as measurable so that the outer expectation and the expectation are the same and so that the standard notion of convergence in distribution is interchangeable with the modified definition.  

\begin{definition}\label{def:Pollard_weak_convergence}
    Let $\{X_n \}_{n=1}^{\infty}$ be a sequence of functions from a probability space $\Omega$ into a metric space $\mathcal{X}$ (not necessarily Borel measurable), and let $X$ be a Borel measurable map from $\Omega$ into $\mathcal{X}$. For any bounded real valued function $f$, $\e(f(X_n))$ is not necessarily well defined, so we define $\e^*$ to be an operator that gives the outer expectation of a real valued function on $\Omega$. In particular, if $H$ is a real valued function on $\Omega$, then
    $$\e^*(H) \equiv \inf \{ \e(U) \such H \leq U, \text{ and } U \text{ is measurable and integrable}\}.$$ 
    We say that $X_n$ converges in distribution to $X$, if for every bounded, uniformly continuous, real valued function $f$ on $\mathcal{X}$, $\lim_{n \rightarrow \infty} \e^*(f(X_n))= \e(f(X))$.
\end{definition}

Throughout the text we will use the operator $\xrightarrow{\mathcal{D}}$ in the statements such as $X_n \xrightarrow{\mathcal{D}} X$ to denote convergence in distribution in the sense of Definition \ref{def:Pollard_weak_convergence}.

\subsection{Results and definitions from \texorpdfstring{\cite{VDV}}{van der Vaart (1998)}}

In the proof of Theorems \ref{theorem:conditionalFDPCLT} and \ref{theorem:conditionalFPR_CLT}, we will rely on some definitions and theorems from \cite{VDV}. Most notably, we need to introduce the notion of Hadamard differentiability and the functional delta method from \cite{VDV}. The functional delta method is defined for an alternate definition of convergence in distribution, provided in Chapter 18.2 of \cite{VDV}, which turns out to be equivalent to 
\nocite{PollardTextbook} Pollard's (1990)
notion of convergence in distribution in Definition \ref{def:Pollard_weak_convergence}. We reproduce the alternate definition of convergence in distribution provided in Chapter 18.2 of \cite{VDV} below.

\begin{definition}\label{def:VDV_weak_convergence}
    Let $\{X_n \}_{n=1}^{\infty}$ be a sequence of random elements and $X$ be a Borel-measurable random variable. $X_n$ converges in distribution to $X$ in the sense of Chapter 18.2 in \cite{VDV}, if $ \lim_{n \rightarrow \infty} \e^*(f(X_n))= \e(f(X))$ for all bounded, continuous functions $f$, where $\e^*$ denotes outer expectation.

\end{definition}

\begin{remark}
    We can equivalently define $\xrightarrow{\mathcal{D}}$ to denote convergence in distribution in the sense of Definition \ref{def:VDV_weak_convergence} because this notion of convergence in distribution is equivalent to the one provided in Definition \ref{def:Pollard_weak_convergence}.
\end{remark}

\begin{proof}
     Definition \ref{def:Pollard_weak_convergence} states that $X_n$ converges in distribution to $X$ if and only if for all bounded, uniformly continuous functions $f$, $\lim_{n \rightarrow \infty} \e^*(f(X_n))= \e(f(X))$. Definition \ref{def:VDV_weak_convergence} states that $X_n$ converges in distribution to $X$ if and only if $ \lim_{n \rightarrow \infty} \e^*(f(X_n))= \e(f(X))$ for all bounded, continuous functions $f$ (which need not be uniformly continuous). Clearly, if convergence in distribution holds in the sense of Definition \ref{def:VDV_weak_convergence}, it will hold in the sense of Definition \ref{def:Pollard_weak_convergence}. To show the converse note that if $ \lim_{n \rightarrow \infty} \e^*(f(X_n))= \e(f(X))$ for all bounded, uniformly continuous functions $f$, then $ \lim_{n \rightarrow \infty} \e^*(f(X_n))= \e(f(X))$ for all bounded, Lipschitz functions $f$, but by the Portmanteau Lemma (Lemma 18.9 in \cite{VDV}) this is equivalent to  $\lim_{n \rightarrow \infty} \e^*(f(X_n))= \e(f(X))$ for all bounded, continuous functions $f$. The notions of convergence of distribution in Definitions \ref{def:Pollard_weak_convergence} and \ref{def:VDV_weak_convergence} are therefore equivalent.
\end{proof}

In our proofs of Theorems \ref{theorem:conditionalFDPCLT} and \ref{theorem:conditionalFPR_CLT}, it will also be helpful to introduce the alternate definition for convergence in probability of non-measurable real valued functions from a probability space, seen in Chapter 18.2 of \cite{VDV}, and to derive Slutsky's lemma for our alternate notions of convergence in distribution and convergence in probability.

Below, we reproduce the alternate notion of convergence in probability seen in Chapter 18.2 of \cite{VDV}.

\begin{definition}\label{def:VDV_convergence_in_prob}
    If $\{X_n \}_{n=1}^{\infty}$ is a sequence of functions from a probability space $\Omega$ into a metric space $(\mathcal{X},d)$, we say that $X_n$ converges in probability to $X$ if for all $\epsilon>0$, $ \lim_{n \rightarrow \infty} \mathbb{P}^*(d(X_n,X) > \epsilon)=0$. Here, $\mathbb{P}^*$ denotes outer probability measure since neither $X_n$ nor $d(X_n,X)$ needs to be Borel-measurable. Throughout the text, we will use the operator $\xrightarrow{\mathcal{P}}$ to denote convergence in probability in the sense of this definition.
\end{definition}

We will need to use Slutsky's lemma for our alternate definitions of convergence in probability and convergence in distribution, which we state and prove below by directly applying Theorems 18.10(v) and 18.11(i) from Chapter 18.2 of \cite{VDV}.

\begin{lemma}[Slutsky's lemma]\label{lemma:Slutsky_new}
     If $X_n \xrightarrow{\mathcal{D}} X$ and $Y_n \xrightarrow{\mathcal{P}} c$ for some constant $c$, then $X_n +Y_n \xrightarrow{\mathcal{D}} X+c$.
\end{lemma}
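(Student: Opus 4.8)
The plan is to deduce the lemma from two standard facts about the outer-measure notion of convergence in distribution (Definition~\ref{def:VDV_weak_convergence}), both recorded in Chapter 18.2 of \cite{VDV} and both valid for the possibly non-measurable maps with which we work. The argument is a short chain: constant-limit convergence in probability is the same as convergence in distribution, marginal convergences with a constant limit glue into a joint convergence, and then the continuous mapping theorem applied to addition finishes the job.

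First I would upgrade the convergence-in-probability hypothesis to convergence in distribution. Since $c$ is a deterministic constant, $Y_n \xrightarrow{\mathcal{P}} c$ in the sense of Definition~\ref{def:VDV_convergence_in_prob} is equivalent to $Y_n \xrightarrow{\mathcal{D}} c$, where the right-hand side denotes the random element identically equal to $c$; this equivalence is one of the items in Theorem 18.10 of \cite{VDV}. I would then invoke Theorem 18.10(v) of \cite{VDV}: because one of the two limits is constant, the marginal convergences $X_n \xrightarrow{\mathcal{D}} X$ and $Y_n \xrightarrow{\mathcal{D}} c$ combine into the joint convergence $(X_n, Y_n) \xrightarrow{\mathcal{D}} (X, c)$ in the product metric space $\mathcal{X} \times \mathcal{X}$. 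The constancy of $c$ is essential at this step, and this is the only place where the full strength of the hypothesis $Y_n \xrightarrow{\mathcal{P}} c$ (rather than convergence to a nondegenerate limit) is used.

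Finally I would apply the continuous mapping theorem. The addition map $(x, y) \mapsto x + y$ on $\mathcal{X} \times \mathcal{X}$ is continuous, so Theorem 18.11(i) of \cite{VDV} gives $X_n + Y_n \xrightarrow{\mathcal{D}} X + c$, which is the claim. There is no substantive obstacle: the only bookkeeping is that $X$, and hence $X + c$, must be Borel measurable, which holds by hypothesis since translation is a homeomorphism, while no measurability of $X_n$ or $Y_n$ is required because each cited theorem is already phrased in terms of outer expectations $\mathbb{E}^*$.
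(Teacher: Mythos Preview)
Your proposal is correct and follows essentially the same route as the paper: invoke Theorem~18.10(v) of \cite{VDV} to obtain the joint convergence $(X_n,Y_n)\xrightarrow{\mathcal{D}}(X,c)$, then apply the continuous mapping theorem (Theorem~18.11(i)) to the addition map. The paper's proof is terser, applying Theorem~18.10(v) directly to the pair of hypotheses without first recasting $Y_n\xrightarrow{\mathcal{P}}c$ as $Y_n\xrightarrow{\mathcal{D}}c$, but the substance is identical.
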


\begin{proof}
Suppose $X_n \xrightarrow{\mathcal{D}} X$ and $Y_n \xrightarrow{\mathcal{P}} c$. Then, by Theorem 18.10(v) in \cite{VDV}, $(X_n,Y_n) \xrightarrow{\mathcal{D}} (X,c)$. By the continuous mapping theorem (Theorem 18.11(i) in \cite{VDV}) since $(Z_1,Z_2) \mapsto Z_1+Z_2$ is continuous, it follows that $X_n +Y_n \xrightarrow{\mathcal{D}} X+c$.
\end{proof}

Since our proofs of Theorems \ref{theorem:conditionalFDPCLT} and \ref{theorem:conditionalFPR_CLT} rely heavily on the concept of Hadamard differentiability and on the functional delta method, we reproduce the definition of Hadamard differentiability as well as the functional delta method from Chapter 20.2 of \cite{VDV} below.

\begin{definition}[Hadamard differentiability]\label{def:Hadamard_Differentiability}
 Let $\mathbb{D}$, $\mathbb{F}$ be normed spaces, let $\phi: \mathbb{D}_{\phi} \rightarrow \mathbb{F}$ be a map defined on a subset $\mathbb{D}_{\phi} \subseteq \mathbb{D}$, and consider an element $\theta \in \mathbb{D}_{\phi}$. Further, let $\mathbb{D}_0$ be a different subset of $\mathbb{D}$. We say that $\phi$ is Hadamard differentiable at $\theta$ tangentially to the subset $\mathbb{D}_0$ if there exists a continuous, linear map $\phi_{\theta}': \mathbb{D}_0 \rightarrow \mathbb{F}$ such that for any $h \in \mathbb{D}_0$ and collection of elements of $\mathbb{D}$ $(h_t)_{t >0}$ satisfying both $\lim_{t \downarrow 0} \Vert h_t-h \Vert_{\mathbb{D}} =0$ and $\{ \theta+th_t \such t>0 \} \subseteq \mathbb{D}_{\phi}$, 
$$ \Bigl\Vert \frac{\phi(\theta+th_t) -\phi(\theta)}{t} - \phi_{\theta}'(h) \Bigr\Vert_{\mathbb{F}} \rightarrow 0 \quad \text{as} \ t \downarrow 0. $$ 
This map $\phi_{\theta}': \mathbb{D}_0 \rightarrow \mathbb{F}$, if it exists, is said to be the Hadamard derivative of the function $\phi$ at $\theta$ tangentially to the subset $\mathbb{D}_0$.

\end{definition}

\begin{theorem}[Functional Delta Method]\label{theorem:functional_delta_method}
     Let $\mathbb{D}$ and $\mathbb{F}$ be normed linear spaces and let $\phi: \mathbb{D}_{\phi} \subseteq \mathbb{D} \rightarrow \mathbb{F}$ be Hadamard differentiable at $\theta$ tangentially to $\mathbb{D}_0 \subseteq \mathbb{D}$ with Hadamard derivative at $\theta$ tangentially to $\mathbb{D}_0$ given by the linear map $\phi_{\theta}': \mathbb{D}_0 \rightarrow \mathbb{F}$. Let $\{T_n \}_{n=1}^{\infty}$ be a sequence of maps from our probability space to $\mathbb{D}_{\phi}$ such that $r_n(T_n-\theta) \xrightarrow{\mathcal{D}} T$ for some sequence of numbers $r_n \rightarrow \infty$ and a $\mathbb{D}_0$ valued random variable $T$. Then $r_n \big( \phi(T_n)-\phi(\theta)\big) \xrightarrow{\mathcal{D}} \phi_{\theta}'(T)$.
\end{theorem}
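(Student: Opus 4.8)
The plan is to follow the classical derivation of the functional delta method in \cite{VDV}, recasting it as an application of the extended continuous mapping theorem. For each $n$ I would introduce the map $g_n$, defined on the set of $h\in\mathbb{D}$ with $\theta+r_n^{-1}h\in\mathbb{D}_\phi$, by
\[
g_n(h)\equiv r_n\bigl(\phi(\theta+r_n^{-1}h)-\phi(\theta)\bigr),
\]
and I would write $g\equiv\phi_\theta'$ for the Hadamard derivative. Since $T_n$ is assumed to take values in $\mathbb{D}_\phi$, the random element $X_n\equiv r_n(T_n-\theta)$ always lies in the domain of $g_n$, and by construction $g_n(X_n)=r_n\bigl(\phi(T_n)-\phi(\theta)\bigr)$, which is exactly the quantity whose limiting law we want to identify.

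The key observation is that Definition \ref{def:Hadamard_Differentiability} says precisely that $g_n(h_n)\to g(h)$ in $\mathbb{F}$ whenever $h\in\mathbb{D}_0$ and $h_n\to h$ in $\mathbb{D}$ with $\theta+r_n^{-1}h_n\in\mathbb{D}_\phi$: taking $t=r_n^{-1}\downarrow 0$ and letting the family $(h_t)_{t>0}$ be built from the sequence $(h_n)$, the displayed limit in Definition \ref{def:Hadamard_Differentiability} becomes $\Vert g_n(h_n)-\phi_\theta'(h)\Vert_\mathbb{F}\to 0$. Thus $g_n$ converges to $g$ ``continuously'' along every sequence that converges to a point of $\mathbb{D}_0$, which is exactly the hypothesis needed to feed into the extended continuous mapping theorem (the varying-map version of the continuous mapping theorem in Chapter 18 of \cite{VDV}).

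The last step is to apply that extended continuous mapping theorem to the maps $g_n$, $g$, the random elements $X_n=r_n(T_n-\theta)$, and the limit $X=T$, which by hypothesis is a Borel-measurable $\mathbb{D}_0$-valued random element with $X_n\xrightarrow{\mathcal{D}}X$ in the sense of Definition \ref{def:Pollard_weak_convergence}. This yields $g_n(X_n)\xrightarrow{\mathcal{D}}g(X)$, that is, $r_n\bigl(\phi(T_n)-\phi(\theta)\bigr)\xrightarrow{\mathcal{D}}\phi_\theta'(T)$, as claimed; linearity of $\phi_\theta'$ is all that is needed to read off that the right-hand side is the appropriate limit element. I expect the only real subtleties to be bookkeeping rather than conceptual: one must verify that the extended continuous mapping theorem is being applied in the outer-expectation formulation of weak convergence (so that possible non-measurability of the $X_n$ and of $g_n(X_n)$ is harmless), that $T$ concentrates on a separable subset so that the limit is well defined, and that the ``for all $t>0$'' quantifier in Definition \ref{def:Hadamard_Differentiability} is reconciled with the sequential statement actually used, by extending a given sequence $(h_n)$ to an admissible family $(h_t)_{t>0}$ that stays in $\mathbb{D}_\phi$.
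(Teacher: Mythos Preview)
Your proposal is correct and follows the standard argument of Theorem 20.8 in \cite{VDV}. The paper does not give its own proof of this statement; it simply cites that reference, so your sketch is in fact the proof the paper defers to.
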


\begin{proof}
See Theorem 20.8 in \cite{VDV}.
\end{proof}

\section{Proof of Theorems \ref{theorem:conditionalFDPCLT}, \ref{theorem:conditionalFPR_CLT}, and \ref{theorem:tau_star_0_res}}

In this section, we will assume the setting of Theorem \ref{theorem:conditionalFDPCLT}. Further, to simplify notation and analysis, we will fix the realization of the latent factor $\bswl$, so that all probabilities, expectations and outer expectations assume a fixed $\bswl$ and are taken with respect to the probability measure and $\sigma$-algebra generated by $\{ (\varepsilon_{mi},H_{mi}), 1 \leq i \leq m < \infty \}$.

\subsection{Helpful results for proving Theorems \ref{theorem:conditionalFDPCLT} and \ref{theorem:conditionalFPR_CLT}}
In this subsection, we develop results to prove Theorems \ref{theorem:conditionalFDPCLT}, \ref{theorem:conditionalFPR_CLT} and \ref{theorem:tau_star_0_res}. We start with a helpful lemma.

\begin{lemma}\label{lemma:Lipschitz}
    Under the conditions of Theorem \ref{theorem:conditionalFDPCLT}, there is 
    a constant $C<\infty$ such that for $r \in \{0,1\}$ and $1 \leq i \leq m < \infty$
    \begin{equation}
           \sup_{(t,s) \in [a,b]^2} \Big| \gamma_{mir}(t)-\gamma_{mir}(s) \Big| \leq C \vert t-s \vert.
    \end{equation}
\end{lemma}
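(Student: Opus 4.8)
The plan is to reuse the derivative computation from the proof of Proposition~\ref{prop:G_is_continuous}, now specialized to the fixed compact interval $[a,b]\subset(0,1)$. Since the conditions of Theorem~\ref{theorem:conditionalFDPCLT} include Conditions~\ref{cond:boundedL} and \ref{cond:noaccum}, the interval $[a,b]$ is well defined (Proposition~\ref{prop:existance_ab}) with $0<a<q<b<1$, so $[a,b]$ is a compact subset of the open interval $(0,1)$. The essential content of the lemma is that the Lipschitz constant can be taken uniform in $m$ and $i$, and that uniformity comes entirely from Condition~\ref{cond:boundedL}.

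First I would record the derivative of $\gamma_{mir}$. Exactly as in Proposition~\ref{prop:G_is_continuous}, for $t\in(0,1)$,
$$\gamma_{mir}'(t)=\frac{1}{\varphi(\Phi^{-1}(t))\,\sqrt{1-\Vert\bm{L}_{mi}\Vert_2^2}}\,\varphi\!\left(\frac{\bar{\Phi}^{-1}(t)-\bm{L}_{mi}^\tran\bswl-\mu_A r}{\sqrt{1-\Vert\bm{L}_{mi}\Vert_2^2}}\right).$$
Then I would bound the three factors uniformly over $r\in\{0,1\}$, over $1\le i\le m<\infty$, and over $t\in[a,b]$. The outer $\varphi(\cdot)$ in the numerator is at most $1/\sqrt{2\pi}$. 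Condition~\ref{cond:boundedL} gives $\Vert\bm{L}_{mi}\Vert_2^2\le S_L<1$, hence $\sqrt{1-\Vert\bm{L}_{mi}\Vert_2^2}\ge\sqrt{1-S_L}>0$. Finally, the function $t\mapsto\varphi(\Phi^{-1}(t))$ is continuous and strictly positive on $(0,1)$, so on the compact set $[a,b]$ it attains a strictly positive minimum $c_0\equiv\min_{t\in[a,b]}\varphi(\Phi^{-1}(t))>0$. Combining these gives
$$\sup_{t\in[a,b]}\ \sup_{r\in\{0,1\}}\ \sup_{1\le i\le m<\infty}|\gamma_{mir}'(t)|\le\frac{1/\sqrt{2\pi}}{c_0\,\sqrt{1-S_L}}\equiv C<\infty,$$
with $C$ independent of $m$, $i$, $r$, and $t$.

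The conclusion then follows from the mean value theorem: for any $t,s\in[a,b]$, $\gamma_{mir}$ is differentiable on the segment joining $t$ and $s$ (which lies in $[a,b]\subset(0,1)$), so $|\gamma_{mir}(t)-\gamma_{mir}(s)|\le C|t-s|$; taking the supremum over $(t,s)\in[a,b]^2$ gives the claim. I do not anticipate any genuine obstacle: the only subtlety is that the bound must be uniform in $m$ and $i$ — secured by Condition~\ref{cond:boundedL} — and uniform in $t$, which is why one works on the compact interval $[a,b]$ rather than all of $(0,1)$, since $\varphi(\Phi^{-1}(t))\to0$ as $t\downarrow0$ or $t\uparrow1$ and no global Lipschitz bound holds there.
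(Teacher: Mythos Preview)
Your proof is correct and follows essentially the same approach as the paper's own proof: bound $|\gamma_{mir}'(t)|$ on $[a,b]$ uniformly in $m,i,r$ using $\varphi(\cdot)\le\varphi(0)=1/\sqrt{2\pi}$, Condition~\ref{cond:boundedL}, and a positive lower bound on $\varphi(\Phi^{-1}(t))$ over the compact interval, then invoke the mean value theorem. The only cosmetic difference is that the paper writes the minimum of $\varphi(\Phi^{-1}(t))$ on $[a,b]$ explicitly as $\min\{\varphi(\Phi^{-1}(a)),\varphi(\Phi^{-1}(b))\}$ using unimodality, whereas you appeal to compactness for the existence of $c_0>0$.
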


\begin{proof}
     For any $r \in \{0,1 \}$ and
     $1 \leq i \leq m < \infty$, note that the derivative of $\gamma_{mir}(t)$ with respect to $t$ 
satisfies
$$\vert \gamma_{mir}'(t) \vert \leq \frac{\varphi(0)}{\varphi(\Phi^{-1}(t)) \sqrt{1- S_L}} \leq \frac{\varphi(0)}{\sqrt{1-S_L}} \max \Bigl\{ \frac{1}{\varphi(\Phi^{-1}(a))},\frac{1}{\varphi(\Phi^{-1}(b))} \Bigr\} \equiv C$$
by the argument used in the proof of Proposition~\ref{prop:G_is_continuous}.
Since $C$ does not depend on $r$, $i$ or $m$, this completes the proof.
\end{proof}

We now define the joint Gaussian process on $[a,b]_2$ to which $\big( W_{m,0}(\cdot), W_{m,1}(\cdot) \big)$ converges in distribution and start by proving f.d.d. convergence.

\begin{definition}
We define $(\tilde{W}_0,\tilde{W}_1)$ be a joint Gaussian process on $[a,b]_2$ with mean zero and joint covariance kernel $c$.
\end{definition}

We note that $(\tilde{W}_0,\tilde{W}_1)$ is well defined because $c$ is a symmetric and positive semidefinite joint covariance kernel.

\begin{proposition}\label{prop:fdd_convergence}
Under the conditions of Theorem \ref{theorem:conditionalFDPCLT}, $( W_{m,0}, W_{m,1} ) \xrightarrow{\fdd} (\tilde{W}_0,\tilde{W}_1)$.
\end{proposition}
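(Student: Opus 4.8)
The plan is to reduce finite-dimensional-distribution convergence to a single application of the CLT for strongly mixing triangular arrays, Theorem~\ref{theorem:Peligrad2.1}, via the Cramér–Wold device. Fix finitely many points $t_1,\dots,t_p\in[a,b]$ and indices $r_1,\dots,r_p\in\{0,1\}$, together with arbitrary real coefficients $\lambda_1,\dots,\lambda_p$. I would set
\begin{equation*}
  Y_{mi} \equiv \frac{1}{\sqrt{m}}\sum_{\ell=1}^{p}\lambda_\ell\Bigl(H_{mi r_\ell}I\{P_{mi}\le t_\ell\}-\pi_{r_\ell}^{(m)}\gamma_{mi r_\ell}(t_\ell)\Bigr),
\end{equation*}
so that $\sum_{i=1}^{m}Y_{mi}=\sum_{\ell=1}^{p}\lambda_\ell W_{m,r_\ell}(t_\ell)$ is exactly the linear combination whose asymptotic normality we must establish. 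Each $Y_{mi}$ is centered and bounded (by $|\lambda_\ell|$ summed, divided by $\sqrt m$), and it is a measurable function of $\xi_{mi}=(\err_{mi},H_{mi})$ since $P_{mi}=\bar\Phi(\mu_A H_{mi}+\bm L_{mi}^\tran\bswl+\err_{mi})$ once we condition on $\bswu=\bswl$; hence the array $\{Y_{mi}\}$ inherits the $\alpha$-mixing coefficients $\alpha(d)$ and the interlaced $\rho$-mixing coefficients $\rho_*(d)$ of $\{\xi_{mi}\}$. Condition~\ref{cond:niceQandgamma} gives strong mixing (in particular $\alpha(d)\to0$), and Condition~\ref{cond:mixalot} gives $\lim_{d\to\infty}\rho_*(d)<1$, which are exactly the mixing hypotheses of Theorem~\ref{theorem:Peligrad2.1}.

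Next I would verify the two analytic hypotheses of Theorem~\ref{theorem:Peligrad2.1}. Write $\sigma_m^2=\var\bigl(\sum_{i=1}^m Y_{mi}\bigr)=\sum_{\ell,\ell'}\lambda_\ell\lambda_{\ell'}c_m^{(r_\ell,r_{\ell'})}(t_\ell,t_{\ell'})$; by Proposition~\ref{prop:splitcov} together with the definition~\eqref{eq:defc} and Condition~\ref{cond:cov_kernel_converges}, $\sigma_m^2\to\sigma^2\equiv\sum_{\ell,\ell'}\lambda_\ell\lambda_{\ell'}c^{(r_\ell,r_{\ell'})}(t_\ell,t_{\ell'})\ge0$. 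I would dispose of the degenerate case $\sigma^2=0$ separately (the sum then converges to $0$ in $L^2$, hence in distribution to the constant $0$, matching $\dnorm(0,0)$). When $\sigma^2>0$ we have $\sigma_m^2$ bounded away from $0$ for large $m$, so the Lindeberg-type condition~\eqref{eq:PeligradCond2} holds trivially because $|Y_{mi}|\le K/\sqrt m$ for a constant $K$ independent of $i,m$, making $I\{|Y_{mi}|>\epsilon\sigma_m\}=0$ once $m$ is large; and condition~\eqref{eq:PeligradCond1} holds because $\sum_{i=1}^m\e(Y_{mi}^2)\le m\cdot K^2/m=K^2$ while $\sigma_m^2$ is bounded below. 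Theorem~\ref{theorem:Peligrad2.1} then yields $\sigma_m^{-1}\sum_i Y_{mi}\xrightarrow{d}\dnorm(0,1)$, hence $\sum_i Y_{mi}\xrightarrow{d}\dnorm(0,\sigma^2)$ by Slutsky. Since the limiting variance is precisely the variance of $\sum_\ell\lambda_\ell\tilde W_{r_\ell}(t_\ell)$ and this holds for all choices of $p$, $\{t_\ell\}$, $\{r_\ell\}$, $\{\lambda_\ell\}$, the Cramér–Wold device delivers $(W_{m,0},W_{m,1})\xrightarrow{\fdd}(\tilde W_0,\tilde W_1)$.

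The main obstacle is purely bookkeeping rather than conceptual: one must be careful that the mixing coefficients of the scalar array $\{Y_{mi}\}$ are genuinely controlled by $\alpha(d)$ and $\rho_*(d)$ — this is immediate because $Y_{mi}$ is $\mathcal F_{m\{i\}}$-measurable and the coefficients in Condition~\ref{cond:niceQandgamma} and~\eqref{eq:rho_star_def} are defined as suprema over all $\sigma$-fields generated by sub-collections of the $\xi_{mi}$, so any functional of the $\xi_{mi}$ supported on an index set $T$ is measurable with respect to $\mathcal F_{mT}$. A secondary point requiring attention is that Peligrad's theorem is stated for arrays indexed $1\le i\le k_m$ with $k_m\to\infty$; here $k_m=m$, which is fine. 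I do not anticipate the need for Andrews–Pollard here — that result is invoked later to upgrade f.d.d.\ convergence to an FCLT — so this proposition is genuinely just the Cramér–Wold plus Peligrad argument sketched above.
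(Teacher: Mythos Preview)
Your proposal is correct and follows essentially the same route as the paper: Cram\'er--Wold reduction, a case split on whether the limiting variance is zero, and in the nondegenerate case an application of Peligrad's CLT (Theorem~\ref{theorem:Peligrad2.1}) to the bounded, $\xi_{mi}$-measurable summands, followed by Slutsky. The only cosmetic difference is that you absorb the $1/\sqrt{m}$ into $Y_{mi}$ so that $\sigma_m^2\to\sigma^2$, whereas the paper leaves it out so that $\sigma_m^2/m\to\sigma^2$; either normalization works, though you should note (as the paper does) that $\sigma_m^2>0$ for every $m$, not just large $m$, to make the $\sup_m$ in~\eqref{eq:PeligradCond1} honest---or else simply restrict to the tail of the array.
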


\begin{proof}
    Fix any $k \in \mathbb{N}$ and fix any $t_1,\dots, t_k \in [a,b]$ and $r_1,\dots ,r_k \in \{0,1 \}$. Define $\Sigma \in \real^{k \times k}$ to be the covariance matrix given by $\Sigma_{ll'}=c^{(r_l,r_{l'})}(t_l ,t_{l'})$ for $l,l' \in \{1,\dots,k \}$. To show f.d.d. convergence we must show that $\big(  W_{m,r_1}(t_1),\dots, W_{m,r_k}(t_k) \big) \xrightarrow{d} \dnorm (0, \Sigma)$. By the Cramér-Wold device, it suffices to show that  $\sum_{l=1}^k a_l  W_{m,r_l}(t_l) \xrightarrow{d} \dnorm (0, \bm{a}^\tran \Sigma \bm{a} )$ for any $\bm{a} \in \real^k$.

    Now fix any $\bm{a} \in \real^k$. To show $\sum_{l=1}^k a_l  W_{m,r_l}(t_l) \xrightarrow{d} \dnorm (0, \bm{a}^\tran \Sigma \bm{a} )$, we will use Neumann's CLT (Theorem \ref{theorem:NeumannCLT}). For $1 \leq i \leq m < \infty$ define,
    $$\begin{aligned} Y_{mi} \equiv  \frac{1}{\sqrt{m}}\sum_{l=1}^k  \Big( H_{mir_l} I \{ \bar{\Phi}(\err_{mi}+\bm{L}_{mi}^\tran \bswl +\mu_A r_l ) \leq t_l \} -\pi_{r_l}^{(m)} \gamma_{mir_l}(t_l) \Big) a_l. \end{aligned}$$
    Clearly $\e(Y_{mi})=0$ for all $m,i$. Note that $\vert Y_{mi} \vert  \leq  k \Vert \bm{a} \Vert_{\infty}/\sqrt{m}$ which implies that $\e(Y_{mi}^2) \leq k^2 \Vert \bm{a} \Vert_{\infty}^2/m$ and, therefore, $\sup_{m \in \mathbb{N}} \sum_{i=1}^m \e(Y_{mi}^2) \leq k^2 \Vert \bm{a} \Vert_{\infty}^2 < \infty$. Therefore, to apply Theorem \ref{theorem:NeumannCLT} to the triangular array $\{Y_{mi} \such 1 \leq i \leq m < \infty \}$, we must check \eqref{eq:NeumannCond1}, \eqref{eq:NeumannCond2}, as well as the existence of a summable $\{\theta_d\}_{d \in \mathbb{N}}$ satisfying the desired properties.
    
    To check \eqref{eq:NeumannCond1} note that because $\sum_{i=1}^m Y_{mi} =  \sum_{l=1}^k a_l W_{m,r_l}(t_l)$,
    \begin{align*}
    \sigma_m^2 \equiv  \var \Big( \sum_{i=1}^m Y_{mi} \Big)=\sum_{l=1}^k  \sum_{l'=1}^k a_l a_{l'}  c_m^{(r_l,r_{l'})}(t_l,t_{l'})= \bm{a}^\tran \Sigma \bm{a} +o(1), 
    \end{align*} where the last equality holds by \eqref{eq:defc} under Condition \ref{cond:cov_kernel_converges} and by our definition of $\Sigma$. Therefore, letting $\sigma^2 \equiv \bm{a}^\tran \Sigma \bm{a} \in [0,\infty)$, \eqref{eq:NeumannCond1} holds. It is easy to see that \eqref{eq:NeumannCond2} holds because for all $i$ and $m$,  $\vert Y_{mi} \vert  \leq  k \Vert \bm{a} \Vert_{\infty}/\sqrt{m}$, where $k\Vert \bm{a} \Vert_{\infty}$ does not depend on $m$ or $i$. 
    
    Now for each $d \in \mathbb{N}$ define $\theta_d \equiv 4 k^2 \Vert \bm{a} \Vert_{\infty}^2 \alpha(d)$, where $\alpha(d)$ is defined in Equation \eqref{eq:def_alpha_mixing_coefficient}. By Condition \ref{cond:niceQandgamma}, $\sum_{d=1}^{\infty} \alpha(d) < \infty$ and therefore $\{\theta_d\}_{d \in \mathbb{N}}$ is a summable sequence of real numbers. Fix any $u,d,m \in \mathbb{N}$ and any $j_1,\dots, j_u, b_1,b_2 \in \mathbb{N}$ satisfying $1 \leq j_1 < j_2 < \dots < j_u < j_u+d =b_1 \leq b_2 \leq m$. Further, fix any measurable function $g: \real^u \to \real$ such that $\sup_{x \in \real^u} \vert g(x) \vert \leq 1$ and we will show that covariance inequalities (i) and (ii) from Theorem \ref{theorem:NeumannCLT} hold. To prove covariance inequality (i), let $U = g(Y_{mj_1}, \dots , Y_{mj_u})Y_{mj_u}$ and $V=Y_{mb_1}$. Clearly, both $\vert U \vert \leq k \Vert \bm{a} \Vert_{\infty}/\sqrt{m}$ and $\vert V \vert \leq k \Vert \bm{a} \Vert_{\infty}/\sqrt{m}$ almost surely. Now recalling the $\sigma$-algebras defined in Section \ref{sec:mixing_conditions} note that $U$ is $\mathcal{A}_1^{j_u}(m)$-measurable and $V$ is $\mathcal{A}_{j_u+d}^{\infty}(m)$-measurable. Thus by Theorem \ref{theorem:HallandHydeA5}, $$\begin{aligned} 
        \big| \cov(U,V) \big| \leq & \frac{4 k^2 \Vert \bm{a} \Vert_{\infty}^2 }{m} \sup_{\substack{A_0 \in \mathcal{A}_1^{j_u}(m) \\ A_1 \in \mathcal{A}_{j_u+d}^{\infty}(m)}} \big| \Pr(A_0 \cap A_1) - \Pr (A_0) \Pr(A_1) \big| \\ \leq & 4 k^2 \Vert \bm{a} \Vert_{\infty}^2 \alpha(d) /m  \\  \leq &   \big( \e(Y_{mj_u}^2) + \e(Y_{mb_1}^2)+1/m \big) \theta_d,   
    \end{aligned}$$ where the second inequality uses Equation \eqref{eq:def_alpha_mixing_coefficient} and the third inequality uses nonnegativity of $\e(Y_{mi}^2)$ and the definition of $\theta_d$. Therefore, covariance inequality (i) of Theorem \ref{theorem:NeumannCLT} holds. To show covariance inequality (ii) holds, we use the same argument except we let $U=g(Y_{mj_1}, \dots , Y_{mj_u})$ and $V=Y_{mb_1}Y_{mb_2}$. Note that both $\vert U \vert \leq 1$ and $\vert V \vert \leq k^2 \Vert \bm{a} \Vert_{\infty}^2/m$ almost surely and that again $U$ is  $\mathcal{A}_1^{j_u}(m)$-measurable while $V$ is $\mathcal{A}_{j_u+d}^{\infty}(m)$-measurable. Therefore, by Theorem \ref{theorem:HallandHydeA5} and the definitions of $\alpha(d)$ and $\theta_d$, $$ \big| \cov(U,V) \big| \leq  \frac{4 k^2 \Vert \bm{a} \Vert_{\infty}^2 \alpha(d) }{m} \leq  \big( \e(Y_{mb_1}^2) + \e(Y_{mb_2}^2)+1/m \big) \theta_d.$$
    
    We have thus shown that with the summable sequence $\{\theta_d \}_{d \in \mathbb{N}}$, covariance inequality (i) and (ii) of Theorem \ref{theorem:NeumannCLT} hold for any fixed $u,d,m \in \mathbb{N}$, any $j_1,\dots, j_u, b_1,b_2 \in \mathbb{N}$ satisfying $1 \leq j_1 < j_2 < \dots < j_u < j_u+d =b_1 \leq b_2 \leq m$, and any $g: \real^u \to \real$ such that $\sup_{x \in \real^u} \vert g(x) \vert \leq 1$. Thus by applying Neumann's CLT (Theorem \ref{theorem:NeumannCLT}), $$\sum_{l=1}^k a_l  W_{m,r_l}(t_l) = \sum_{i=1}^m Y_{mi} \xrightarrow{d} \dnorm(0, \bm{a}^\tran \Sigma \bm{a}). $$

\end{proof}

Now that we have established $(W_{m,0},W_{m,1}) \xrightarrow{\fdd} (\tilde{W}_0,\tilde{W}_1)$ we can derive an FCLT for $(W_{m,0},W_{m,1})$ by applying a result from \cite{AndrewsAndPollard}, which is based on a chaining argument. The FCLT uses a slightly modified definition of weak convergence which can be applied to a sequence of non-Borel measurable $D[a,b]_2$-valued random variables (see Definition \ref{def:Pollard_weak_convergence} or equivalently Definition \ref{def:VDV_weak_convergence}.)

\begin{theorem}\label{theorem:FCLT_Pollard}
    Under the conditions of Theorem \ref{theorem:conditionalFDPCLT}, $(W_{m,0},W_{m,1}) \xrightarrow{\mathcal{D}} (W_0,W_1)$, where $(W_0,W_1)$ is a joint Gaussian process with mean zero and joint covariance kernel $c$, and $(W_0,W_1) \in C[a,b]_2$ almost surely.
\end{theorem}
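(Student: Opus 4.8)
The plan is to invoke the Andrews--Pollard FCLT (Theorem \ref{theorem:FCLT_AndrewsPollard}) with an appropriately constructed triangular array and function class, using the f.d.d.\ convergence from Proposition \ref{prop:fdd_convergence} as the input hypothesis. First I would set $\Xi_{mi} \equiv \xi_{mi} = (\err_{mi}, H_{mi})$, whose $\alpha$-mixing coefficients are exactly the $\alpha(d)$ controlled by Condition~\ref{cond:niceQandgamma}(ii) with the even integer $Q$ and $\gamma>0$ of that condition; this gives the summability hypothesis of Theorem \ref{theorem:FCLT_AndrewsPollard} directly. The natural index class is
$$ \mathcal{F} = \bigl\{ f_{r,t} \such r \in \{0,1\},\ t \in [a,b] \bigr\}, \qquad f_{r,t}(\err, H) \equiv H_r \, I\{ \bar{\Phi}(\mu_A r + \bm{L}_{mi}^\tran \bswl + \err) \le t \}, $$
so that $\nu_m f_{r,t}$ essentially reproduces $W_{m,r}(t)$ up to the $F_{m,r}$ centering. (A small wrinkle: the loadings $\bm{L}_{mi}$ depend on $i$, so strictly speaking one should fold the index $i$ into the definition, or equivalently work with the class of all functions $(\err,H)\mapsto H_r I\{\err \le u\}$ indexed by $(r,u)\in\{0,1\}\times\real$ and then note that each $W_{m,r}(t)$ is $\nu_m$ applied to one such function; I would phrase it the latter way to keep $\mathcal{F}$ independent of $m$.)

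The two genuinely required verifications are that $\mathcal{F}$ is uniformly bounded and that its bracketing integral $\int_0^1 x^{-\gamma/(2+\gamma)} N(x,\mathcal{F},\seminorm)^{1/Q}\,dx$ is finite. Uniform boundedness is immediate since every $f \in \mathcal{F}$ takes values in $\{0,1\}$. For the bracketing number I would use Lemma \ref{lemma:Lipschitz}: since $\gamma_{mir}$ is Lipschitz on $[a,b]$ with a constant $C$ uniform in $m,i,r$, a $\delta$-net in $t$ of size $O(1/\delta^2)$ (with the exponent absorbing the Lipschitz constant) yields brackets whose seminorm $\seminorm(b) = \sup_{m,i}\sqrt{\e(b(\Xi_{mi})^2)}$ is $O(\delta)$ — concretely, between consecutive grid points $t_j < t_{j+1}$ the function $f_{r,t}$ differs from $f_{r,t_j}$ only on the event $\{\bar\Phi(\cdots)\in(t_j,t_{j+1}]\}$, which for each $i$ has probability $\gamma_{mir}(t_{j+1})-\gamma_{mir}(t_j)\le C(t_{j+1}-t_j)$, so the natural bracketing function has squared seminorm at most $C(t_{j+1}-t_j)$. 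Choosing the grid spacing $\propto \delta^2$ gives $N(\delta,\mathcal{F},\seminorm) = O(\delta^{-2})$, and then the integral becomes (up to constants) $\int_0^1 x^{-\gamma/(2+\gamma)} x^{-2/Q}\,dx$, which converges precisely because Condition~\ref{cond:niceQandgamma}(i) states $\gamma/(2+\gamma) + 2/Q < 1$. Condition~\ref{cond:mixalot} is not needed here — it was only used in Proposition \ref{prop:fdd_convergence}.

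With f.d.d.\ convergence (Proposition \ref{prop:fdd_convergence}), the mixing-rate hypothesis (Condition \ref{cond:niceQandgamma}(ii)), uniform boundedness, and the bracketing integral all in hand, Theorem \ref{theorem:FCLT_AndrewsPollard} yields that $\{\nu_m f \such f \in \mathcal{F}\}$ converges in distribution (in the sense of Definition \ref{def:Pollard_weak_convergence}) to a Gaussian process indexed by $\mathcal{F}$ with $\seminorm$-continuous sample paths. Two last bookkeeping steps finish the argument: (a) translate this into a statement about $(W_{m,0},W_{m,1})$ as $D[a,b]_2$-valued processes — the map sending the $\mathcal{F}$-indexed process to the pair of càdlàg paths $t\mapsto(\nu_m f_{0,t}, \nu_m f_{1,t})$ is continuous in the relevant topology, and $W_{m,r}(t) = \nu_m f_{r,t} - \sqrt m(F_{m,r}(t) - \e\hat F_{m,r}(t)) = \nu_m f_{r,t}$ exactly since $\e\hat F_{m,r}=F_{m,r}$, so no shift arises; (b) identify the limit: its covariance kernel is $c$ by Proposition \ref{prop:splitcov} and \eqref{eq:defc}, and $\seminorm$-continuity of sample paths translates into ordinary continuity in $t$ because Lemma \ref{lemma:Lipschitz} shows $\seminorm(f_{r,t}-f_{r,s}) \le \sqrt{C|t-s|} \to 0$ as $s\to t$, so $\seminorm$-small means $t$-close; hence $(W_0,W_1)\in C[a,b]_2$ almost surely. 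I expect the main obstacle to be the bracketing-number bound — specifically, being careful that the seminorm $\seminorm$ (a supremum over all $m,i$ of an $L^2(\Xi_{mi})$ norm) is the right object to bound and that the uniform Lipschitz constant from Lemma \ref{lemma:Lipschitz} really does make $N(\delta)$ polynomial in $1/\delta$ with the exponent $2$ that Condition~\ref{cond:niceQandgamma}(i) is calibrated to handle.
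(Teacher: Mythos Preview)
Your overall architecture is right---invoke Theorem~\ref{theorem:FCLT_AndrewsPollard} with the mixing hypothesis from Condition~\ref{cond:niceQandgamma}(ii), feed it the f.d.d.\ convergence from Proposition~\ref{prop:fdd_convergence}, and control the bracketing integral via the uniform Lipschitz bound in Lemma~\ref{lemma:Lipschitz}. Your bracketing computation (grid spacing $\propto \delta^2$ giving $N(\delta)=O(\delta^{-2})$, hence a convergent integral by Condition~\ref{cond:niceQandgamma}(i)) and your translation of $\seminorm$-continuity to $t$-continuity both match the paper's argument.

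The genuine gap is exactly the ``small wrinkle'' you flag, and your proposed resolution does not work. If you take the function class $\{(\err,H)\mapsto H_r\,I\{\err\le u\}\}$ indexed by $(r,u)$, then $W_{m,r}(t)$ is \emph{not} $\nu_m$ applied to one such function: the event $\{P_{mi}\le t\}$ translates to $\{\err_{mi}\ge \bar\Phi^{-1}(t)-\mu_A r-\bm{L}_{mi}^\tran\bswl\}$, so the threshold $u$ varies with $i$ through the loading. Consequently $W_{m,r}(t)=m^{-1/2}\sum_i(\,\cdot\,)$ is a diagonal sum across \emph{different} elements of the class, not the standard $\nu_m f$, and Theorem~\ref{theorem:FCLT_AndrewsPollard} does not apply to it.

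The paper's fix is to enlarge the \emph{array} rather than the function class: set $\tilde\xi_{mi}=(\tilde\err_{mi},H_{mi},\bm{L}_{mi},\bswl)$ with $\tilde\err_{mi}=\err_{mi}/\sqrt{1-\Vert\bm{L}_{mi}\Vert_2^2}$, so that the (deterministic) loading is carried along as data. Then a single function
\[
\tilde f\bigl((\tilde\err,H,\bm{L},\bswl),(t,r)\bigr)=I\{H=r\}\,I\Bigl\{\bar\Phi(\tilde\err)\le \bar\Phi\Bigl(\tfrac{\bar\Phi^{-1}(t)-\bm{L}^\tran\bswl-\mu_A r}{\sqrt{1-\Vert\bm{L}\Vert_2^2}}\Bigr)\Bigr\}
\]
recovers exactly $H_{mir}I\{P_{mi}\le t\}$ when evaluated at $\tilde\xi_{mi}$, so $W_{m,r}(t)=\nu_m\tilde f(\cdot,(t,r))$ genuinely. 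Because the appended components $\bm{L}_{mi},\bswl$ are deterministic, the $\alpha$-mixing coefficients of $\{\tilde\xi_{mi}\}$ coincide with those of $\{\xi_{mi}\}$, so Condition~\ref{cond:niceQandgamma} still does the work. The normalization to $\tilde\err_{mi}\sim\dnorm(0,1)$ is what makes the seminorm computation land cleanly on $\sup_{m,i}\sqrt{\pi_r^{(m)}|\gamma_{mir}(t)-\gamma_{mir}(s)|}\le\sqrt{C|t-s|}$, exactly the quantity controlled by Lemma~\ref{lemma:Lipschitz}. Once you make this adjustment, the rest of your outline goes through essentially as written.
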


\begin{proof} 
The proof is lengthy and so we split it into a sequences of steps as follows:
\begin{enumerate}
    \item [\quad\sl 1)] Defining the function class $\cf$ and seminorm $\seminorm$ that we need in order to use Theorem~\ref{theorem:FCLT_AndrewsPollard}, the FCLT from \cite{AndrewsAndPollard},
    \item [\quad\sl 2)]
    upper bounding the bracketing number $N(x,\cf,\seminorm)$,
    \item [\quad\sl 3)] checking the FCLT conditions,
    \item [\quad\sl 4)] applying the FCLT,
    \item [\quad\sl 5)]defining limiting processes $W_0$ and $W_1$, and finally
    \item [\quad\sl 6)] verifying that $(W_0,W_1)\in C[a,b]_2$.
\end{enumerate}
    \paragraphLocal{Defining the relevant function class, $\mathcal{F}$, and seminorm, $\seminorm$.}
    First, for each $m,i$ recall that 
    $\tilde{\err}_{mi}=\err_{mi}/\sqrt{1-\Vert L_{mi} \Vert_2^2}$ has unit variance. Let $\{\tilde{\xi}_{mi} \such 1 \leq i \leq m < \infty \}$ be an expansion of our triangular array where $\tilde{\xi}_{mi}=(\tilde{\err}_{mi},H_{mi},\bm{L}_{mi},\bswl)$. 
    Because $\bm{L}_{mi}$ and $\bswl$ are deterministic and $\Vert \bm{L}_{mi} \Vert_2^2$ is bounded away from 1 by Condition~\ref{cond:boundedL}, the arrays $\{\tilde{\xi}_{mi} \such 1 \leq i \leq m < \infty \}$ and $\{ (\err_{mi}, H_{mi}) \such 1 \leq i \leq m < \infty \}$ have the same $\alpha$-mixing coefficients. %
    These coefficients $\{\alpha(d) \}_{d=1}^{\infty}$ satisfy Condition~\ref{cond:niceQandgamma}. 
    Now define $\mathcal{Y}=\real \times \{0,1 \} \times \real^k \times \real^k$, and define $\tilde{f}: \mathcal{Y} \times \big( [a,b] \times \{0,1 \} \big) \rightarrow [0,1]$ via
    $$ \tilde{f} \big( (\tilde{\err},H, \bm{L}, \bswl), (t,r) \big)= I \{H=r \} I \biggl\{ \bar{\Phi}( \tilde{\err} ) \leq \bar{\Phi} \bigg( \frac{\bar{\Phi}^{-1}(t) -\bm{L}^\tran  \bswl -\mu_A r  }{\sqrt{1- \Vert \bm{L} \Vert_2^2 }} \bigg)  \biggr\}.$$ 
    For $(t,r) \in [a,b] \times \{0,1\}$, note that $\tilde{f} \big( \tilde{\xi}_{mi},(t,r) \big)=H_{mir} I \{ \bar{\Phi}(\tilde{\err}_{mi}) \leq \gamma_{mir}(t) \}$. 

    The function class we will use includes the various restrictions of $\tilde{f}$ to a fixed $t \in [a,b]$ and $r \in \{0,1\}$ while $\tilde\err$, $H$, $\bm{L}$ and $\bswl$ vary: 
    \begin{equation}
        \mathcal{F} \equiv \bigl\{ \tilde{f} \big( \cdot, (t,r) \big) \such (t,r) \in [a,b] \times \{0,1\} \bigr\}.
    \end{equation} Letting $\seminorm$ be the seminorm function for the array $\{\tilde{\xi}_{mi} \such 1 \leq i \leq m < \infty \}$ given by Definition \ref{def:rho_semi_norm}, observe that for $t,s \in [a,b]$ and $r \in \{0,1 \}$ a simple computation gives
    \begin{equation}\label{eq:inequality_on_function_class}
    \seminorm \big(  \tilde{f} ( \cdot, (t,r) ) -   \tilde{f} ( \cdot, (s,r) )  \big)  =  \sup_{m,i} \sqrt{ \pi_r^{(m)} \vert  \gamma_{mir}(t) - \gamma_{mir}(s)\vert} \leq  \sqrt{C \vert t-s \vert},
    \end{equation} where the inequality follows from Lemma \ref{lemma:Lipschitz}. 
    
    \paragraphLocal{Upper bounding the bracketing number $N(x,\mathcal{F},\seminorm)$.} 
    To apply the FCLT (Theorem \ref{theorem:FCLT_AndrewsPollard}), we must first compute an upper bound on the bracketing number $N(x,\mathcal{F},\seminorm)$ for each $x>0$. 
    Fixing $x > 0$, take $N_* \equiv \lceil {C(b-a)}/{x^2} \rceil$. 
    For each $l \in \{1,\dots,N_*,N_*+1 \}$ and $r \in \{0,1 \}$ define 
    $$f_{l,r}= \tilde{f} \big( \cdot, (a+ (b-a)(l-1)/N_* , r) \big),$$ 
    and 
    for $1\le l\le N_*$ define $b_{l,r}= f_{l+1,r}-f_{l,r}$. 
    Note for a given $f \in \mathcal{F} \setminus \{f_{1,0},f_{1,1} \}$, since $f=\tilde{f}(\cdot, (t_*,r_*) )$ for some $(t_*,r_*) \in (a,b] \times \{0,1 \}$ we can choose $r=r_*$ and $l \in \{1,\dots,N_* \}$ such that $t_* \in \big(a+{(b-a)(l-1)}/{N_*}, a+ {(b-a)l}/{N_*} \big]$, and it is easy to check that $\vert f -f_{l,r} \vert \leq b_{l,r}$. Adding the trivial cases of $f \in \{f_{1,0},f_{1,1} \}$ it follows that for any $f \in \mathcal{F}$, there exists an $l \in \{1,\dots,N_* \}$ and $r \in \{0,1 \}$ such that $\vert f - f_{l,r} \vert \leq b_{l,r}$. Furthermore, for any $l \in  \{1,\dots,N_* \}$ and $r \in \{0,1 \}$, by applying Inequality \eqref{eq:inequality_on_function_class}, $$\seminorm(b_{l,r})=\seminorm(f_{l+1,r}-f_{l,r} ) \leq \sqrt{C(b-a)/N_*} \leq x.$$ 
    Thus, we have shown that there exists functions $f_{1,0},\dots,f_{N_*,0},f_{1,1},\dots,f_{N_*,1} \in \mathcal{F}$ and functions $b_{1,0},\dots,b_{N_*,0}, b_{1,1},\dots,b_{N_*,1} : \mathcal{Y} \rightarrow \real$, satisfying $\seminorm(b_{l,r}) \leq x$ for all $(l,r) \in \{1,\dots,N_* \} \times \{0,1 \}$, such that for any $f \in \mathcal{F}$ there exists an $(l,r) \in \{1,\dots,N_* \} \times \{0,1 \}$ for which $\vert f - f_{l,r} \vert \leq b_{l,r}$. Hence, by Definition \ref{def:Pollard_bracketing_numbers}, $N(x, \mathcal{F},\seminorm) \leq 2N_*= 2 \lceil {C(b-a)}/{x^2} \rceil$, and this argument holds for any $x>0$.
    
    \paragraphLocal{Checking the conditions for the FCLT of \cite{AndrewsAndPollard} (Theorem \ref{theorem:FCLT_AndrewsPollard}.)} Letting $Q > 2$ be the even integer and $\gamma>0$ be the number guaranteed by Condition~\ref{cond:niceQandgamma}, we find that \begin{equation}\label{eq:cond2PollardAndrews}
        \int_0^1 x^{-\frac{\gamma}{2+\gamma}} N(x,\mathcal{F}, \seminorm)^{\frac{1}{Q}} dx \leq \int_0^1 x^{-\frac{\gamma}{2+\gamma}} \Big( 2 \Bigl\lceil \frac{C(b-a)}{x^2} \Bigr\rceil \Big)^{\frac{1}{Q}} dx < \infty.
    \end{equation} 
    The integral above is finite because ${\gamma}/({2+\gamma})+{2}/{Q} <1$. 
    The $\alpha$-mixing coefficients of $\{\tilde{\xi}_{mi},\ 1\le i\le m<\infty\}$ are $\alpha(d)$ which satisfy Condition~\ref{cond:niceQandgamma}.
Therefore,
\begin{equation}\label{eq:cond1PollardAndrews} \sum_{d=1}^{\infty} d^{Q-2} \alpha(d)^{\frac{\gamma}{Q+\gamma}} < \infty.
    \end{equation}
    
    \paragraphLocal{Applying the FCLT of \cite{AndrewsAndPollard} (Theorem \ref{theorem:FCLT_AndrewsPollard}.)} By \eqref{eq:cond2PollardAndrews} and \eqref{eq:cond1PollardAndrews}, since $\mathcal{F}$ is a uniformly bounded class of real-valued functions, and since $\{\tilde{\xi}_{mi}, 1 \leq i \leq m < \infty \}$ is strongly mixing triangular array, we are in the setting of Theorem \ref{theorem:FCLT_AndrewsPollard}. To obtain an FCLT it therefore remains to check that for any $f_1,\dots,f_k \in \mathcal{F}$, $(\nu_m f_1,\dots,\nu_m f_k)$ converges in distribution to a multivariate Gaussian as $m \rightarrow \infty$, where $\nu_m$ is the operator from Definition \ref{def:indexing_operator}. This is indeed the case because for any $f \in \mathcal{F}$ there exists a $(t,r) \in[a,b] \times \{0,1\}$ such that $\nu_m f = W_{m,r}(t)$ and because by Proposition \ref{prop:fdd_convergence} $(W_{m,0},W_{m,1})$ converges in f.d.d to a Gaussian process on $[a,b] \times \{0,1 \}$. Thus applying Theorem \ref{theorem:FCLT_AndrewsPollard}, it follows that the stochastic process $\{ \nu_m f \such f \in \mathcal{F} \}$ converges in distribution to a Gaussian process indexed by $\mathcal{F}$ which has $\seminorm$-continuous sample paths in the sense of Definition \ref{def:AP_rho_continuous}, where convergence in distribution is in the sense of Definition \ref{def:Pollard_weak_convergence}.
    
    \paragraphLocal{Defining $(W_0,W_1)$ in terms of $W_{\mathcal{F}}$.} Let $W_{\mathcal{F}}$ be the Gaussian process on $\mathcal{F}$ to which $\{ \nu_m f \such f \in \mathcal{F} \}$ converges. For $(t,r) \in[a,b] \times \{0,1\}$ we define $W_r(t) \equiv W_{\mathcal{F}} \big( \tilde{f}(\cdot, (t,r)) \big)$. By considering the correspondence between $\mathcal{F}$ and $[a,b] \times \{0,1\}$ where for any $f \in \mathcal{F}$, $\nu_m f = W_{m,r}(t)$ for some unique $(t,r) \in[a,b] \times \{0,1\}$ because $\{ \nu_mf \such f \in \mathcal{F} \} \xrightarrow{\mathcal{D}} W_{\mathcal{F}}$ it is clear that $( W_{m,0}, W_{m,1}) \xrightarrow{\mathcal{D}} (W_0,W_1)$. This of course implies that $( W_{m,0}, W_{m,1}) \xrightarrow{\fdd} (W_0,W_1)$. 
    But since by Proposition \ref{prop:fdd_convergence}, $( W_{m,0}, W_{m,1}) \xrightarrow{\fdd} (\tilde{W}_0,\tilde{W}_1)$ it follows that $(W_0,W_1)$ has the same finite dimensional distribution as $(\tilde{W}_0,\tilde{W}_1)$, so $(W_0,W_1)$ is indeed a joint Gaussian process with mean zero and joint covariance kernel $c$.
    
    \paragraphLocal{Checking that $(W_0,W_1) \in C[a,b]_2$ always, using $\seminorm$-continuity of $W_{\mathcal{F}}$.} 
    It remains to show that $(W_0,W_1) \in C[a,b]_2$ always. 
    To check this let $\Omega$ be the probability space on which $\{ \tilde{\xi}_{mi} \}$ are defined. 
    Fix any $\omega \in \Omega$, and we will show that $\big(W_0(\cdot)(\omega), W_1(\cdot)(\omega) \big) \in C[a,b]_2$. To do this, further fix any $t \in [a,b]$, $r \in \{0,1 \}$ and $\epsilon>0$. 
    Let $g= \tilde{f}(\cdot,(t,r))$. 
    Since $W_{\mathcal{F}}(\cdot)(\omega)$ is $\seminorm$-continuous at $g$, by Definition \ref{def:AP_rho_continuous}, there exists an $\eta>0$ such that if $f \in \mathcal{F}$ with $\seminorm(f-g)< \eta$, then $\vert W_{\mathcal{F}}(f)(\omega) - W_{\mathcal{F}}(g)(\omega)\vert < \epsilon$. Taking such an $\eta$, define $\eta_*=\eta^2/C$. 
    We must show that $\vert W_r(t)(\omega)- W_r(s)(\omega) \vert < \epsilon$ for any $s \in [a,b]$ with $\vert t-s \vert< \eta_*$. 
    Let $s \in [a,b]$ with $\vert t-s \vert < \eta_*$. 
    Define $h=\tilde{f}(\cdot, (s,r) ) \in \mathcal{F}$. By \eqref{eq:inequality_on_function_class},  $$\seminorm(h-g) \leq \sqrt{C \vert t-s \vert} < \sqrt{C \eta_*} = \eta$$ which, by a previous claim, further implies $\vert W_{\mathcal{F}}(h)(\omega) - W_{\mathcal{F}}(g)(\omega)\vert < \epsilon$. Now by the definition of $W_r$, we get $\vert W_r (s)(\omega) - W_r (t)(\omega) \vert < \epsilon$. 
    Thus, we have shown that for any $\epsilon>0$ there exists $\eta_*>0$ such that if $s \in [a,b]$ and  $\vert t-s \vert < \eta_*$ then $\vert W_r ( t)(\omega) - W_r ( s)(\omega) \vert < \epsilon$. 
    Hence, $W_r(\cdot)(\omega)$ is continuous at $t$. 
    Since this argument holds for any $(t,r) \in [a,b] \times \{0,1\}$,  $\big( W_0(\cdot)(\omega) ,W_1( \cdot)(\omega) \big) \in C[a,b]_2$. 
    Since this holds for any $\omega \in \Omega$,  $( W_0 ,W_1) \in C[a,b]_2$ always.
   
\end{proof}

With some manipulation of the previous result, we can obtain the following result, which sets us up for using the functional delta method (Theorem \ref{theorem:functional_delta_method}).

\begin{corollary}\label{cor:VDV_FCLT}
    Under the conditions of Theorem~\ref{theorem:conditionalFDPCLT}, $(\hat{W}_{m,0},\hat{W}_{m,1}) \xrightarrow{\mathcal{D}} (W_0,W_1)$, where $(W_0,W_1)$ is a joint Gaussian process with mean zero and joint covariance kernel $c$, and $(W_0,W_1) \in C[a,b]_2$ almost surely.
\end{corollary}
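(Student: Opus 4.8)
The plan is to obtain Corollary~\ref{cor:VDV_FCLT} from Theorem~\ref{theorem:FCLT_Pollard} by absorbing the difference between the recentering sequences $F_{m,r}$ and their limits $F_r$ into a vanishing deterministic perturbation and then invoking Slutsky's lemma. Concretely, I would first write, for each $r\in\{0,1\}$ and $t\in[a,b]$,
\[
\hat W_{m,r}(t)=\sqrt m\bigl(\hat F_{m,r}(t)-F_r(t)\bigr)=W_{m,r}(t)+g_{m,r}(t),\qquad g_{m,r}(t)\equiv\sqrt m\bigl(F_{m,r}(t)-F_r(t)\bigr).
\]
Each $g_{m,r}$ is deterministic, and since $\gamma_{mir}$ is continuous on $[a,b]$ (indeed Lipschitz, by Lemma~\ref{lemma:Lipschitz}), $F_{m,r}$ is continuous and, by Condition~\ref{cond:F0F1_defined} together with Proposition~\ref{prop:G_is_continuous}, so is $F_r$; hence $(g_{m,0},g_{m,1})$ is a deterministic element of $D[a,b]_2$. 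The content of Condition~\ref{cond:F0F1_fast_unif_conv} is precisely that $\sup_{t\in[a,b]}|g_{m,r}(t)|\to0$ as $m\to\infty$ for $r\in\{0,1\}$, so $(g_{m,0},g_{m,1})\to(0,0)$ in the sup-norm on $D[a,b]_2$; being deterministic, this sequence also converges in probability to the zero element in the sense of Definition~\ref{def:VDV_convergence_in_prob}, because $\mathbb P^*\bigl(\|(g_{m,0},g_{m,1})\|_\infty>\epsilon\bigr)=0$ for all large $m$.

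Next I would combine this with Theorem~\ref{theorem:FCLT_Pollard}, which gives $(W_{m,0},W_{m,1})\xrightarrow{\mathcal D}(W_0,W_1)$ where $(W_0,W_1)$ is the mean-zero Gaussian process with covariance kernel $c$ and $(W_0,W_1)\in C[a,b]_2$ almost surely. Applying Slutsky's lemma (Lemma~\ref{lemma:Slutsky_new}) in the normed linear space $D[a,b]_2$, with $X_m=(W_{m,0},W_{m,1})$ and $Y_m=(g_{m,0},g_{m,1})$ and the constant there taken to be the zero function, yields
\[
(\hat W_{m,0},\hat W_{m,1})=X_m+Y_m\xrightarrow{\mathcal D}(W_0,W_1),
\]
which is the asserted convergence; the stated properties of the limit are inherited verbatim from Theorem~\ref{theorem:FCLT_Pollard}. (Alternatively, one can argue directly from Definition~\ref{def:Pollard_weak_convergence}: for bounded uniformly continuous $\varphi$ on $D[a,b]_2$, pick $\delta$ from the modulus of continuity of $\varphi$; once $\|\hat W_m-W_m\|_\infty<\delta$ we have $|\varphi(\hat W_m)-\varphi(W_m)|<\epsilon$ pointwise on the sample space, whence $\e^*\varphi(\hat W_m)$ and $\e^*\varphi(W_m)$ share the same limit $\e\,\varphi(W_0,W_1)$.)

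I do not anticipate a genuine obstacle here. The only two points that need a line of justification are (i) that Lemma~\ref{lemma:Slutsky_new} still applies when the ``constant'' limit of $Y_m$ is only approached by a deterministic sequence rather than attained --- immediate, since a norm-convergent deterministic sequence converges in outer probability to its limit --- and (ii) the measurability bookkeeping, which is exactly why the outer-expectation versions of convergence in distribution and in probability are used throughout and why it matters that the perturbation $(g_{m,0},g_{m,1})$ is deterministic rather than random.
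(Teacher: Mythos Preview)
Your proposal is correct and follows essentially the same route as the paper: decompose $(\hat W_{m,0},\hat W_{m,1})=(W_{m,0},W_{m,1})+\sqrt m\,(F_{m,\cdot}-F_\cdot)$, use Condition~\ref{cond:F0F1_fast_unif_conv} to send the deterministic remainder to zero in sup-norm (hence in outer probability), and combine Theorem~\ref{theorem:FCLT_Pollard} with Slutsky's lemma (Lemma~\ref{lemma:Slutsky_new}). Your write-up is somewhat more detailed than the paper's, but the argument is the same.
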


\begin{proof}

   Observe that $$(\hat{W}_{m,0},\hat{W}_{m,1})=(W_{m,0},W_{m,1})+ \big( \sqrt{m} (F_{m,0}-F_0),\sqrt{m} (F_{m,1}-F_1) \big)$$ 
   where by Condition~\ref{cond:F0F1_fast_unif_conv}, the right hand term deterministically converges in $C[a,b]_2$ with respect to the $\sup$-norm to $(0,0) \in C[a,b]_2$, and hence, it also converges in probability to $(0,0)$. 
   Therefore, by Slutsky's lemma (Lemma \ref{lemma:Slutsky_new}) and Theorem \ref{theorem:FCLT_Pollard}, $(\hat{W}_{m,0},\hat{W}_{m,1}) \xrightarrow{\mathcal{D}} (W_0,W_1)$. 
   Also by Theorem \ref{theorem:FCLT_Pollard}, $(W_0,W_1) \in C[a,b]_2$ almost surely and $(W_0,W_1)$ is a joint Gaussian process with mean zero and joint covariance kernel $c$.
    \end{proof}

To derive CLTs for $\FDP_m$ and $V_m/m$ from an FCLT for $(\hat{W}_{m,0},\hat{W}_{m,1})$ we must apply the functional delta method. To make this precise, we define some functions.
First, we define $\mathcal{T} : D[a,b] \rightarrow [a,b]$ via 
\begin{equation}\label{eq:mathcalTDef} \mathcal{T}(H) = \sup \{ u \in [a,b] \such H(u) \geq {u}/{q} \} \end{equation} for all $H \in D[a,b]$. 
The Simes point is
$\tau_* = \mathcal{T} (G)$. Next, define $\mathcal{R}: D[a,b]_2 \rightarrow \real$ as 
\begin{equation}\label{eq:mathcalRDef} \mathcal{R}(H_0,H_1)= (H_0+H_1) \big( \mathcal{T} (H_0+H_1) \big)\end{equation} 
for all $(H_0,H_1) \in D[a,b]_2$. 
Now for $(H_0,H_1)\in D[a,b]_2$, define real-valued FPR and FDP functions
\begin{align}
    \label{eq:PsiFPRDef} 
    \Psi^{(\FPR)} (H_0,H_1)&=H_0\big( \mathcal{T}(H_0+H_1)\big),\quad\text{and}\\ \label{eq:PsiFDPDef}
\Psi^{(\FDP)}(H_0,H_1)&=\frac{H_0 \big( \mathcal{T}(H_0+H_1) \big)}{(H_0+H_1) \big( \mathcal{T}(H_0+H_1) \big)}= \frac{\Psi^{(\FPR)} (H_0,H_1)}{\mathcal{R}(H_0,H_1)}. \end{align}

Our goal is to compute the Hadamard derivative of both $\Psi^{(\FPR)}$ and $\Psi^{(\FDP)}$ at the point $(F_0,F_1) \in C[a,b]_2$ tangentially to $C[a,b]_2$. 
While the supplement of \cite{DR16} calculates the Hadamard derivative of $\Psi^{(\FDP)}$ at $(F_0,F_1)$ in the case where $F_0(t)=\pi_0 t$, our Hadamard derivative calculation comes out differently because, in our setting, $F_0$ is not necessarily linear with slope $\pi_0$. 
To compute the Hadamard derivative for $\Psi^{(\FPR)}$ and $\Psi^{(\FDP)}$ at the point $(F_0,F_1)$ tangentially to $C[a,b]_2$, it is helpful to first calculate the Hadamard derivative of $\mathcal{T}$ at $(F_0,F_1)$ tangentially to $C[a,b]_2$. To compute the latter, we mimic the calculations seen in \cite{Neuvial2008}, reproving some of their results in order to generalize from functions defined on $D[0,1]$ to functions defined on $D[a,b]$ and to situations where $F_0$ is not linear with slope $\pi_0$. We chose to reproduce the proof for Hadamard differentiability of $\mathcal{T}$ at $(F_0,F_1)$ tangentially to $C[a,b]_2$ rather than reference to the result from \cite{Neuvial2008} in order to avoid generalization and notation misinterpretation errors and in order to familiarize the reader with Hadamard derivative calculations which, regardless, are needed for $\Psi^{(\FPR)}$ in our setting. The proofs of Hadamard differentiability and the calculations of the Hadamard derivative of both $\Psi^{(\FPR)}$ and $\Psi^{(\FDP)}$ at the point $(F_0,F_1) \in C[a,b]_2$ tangentially to $C[a,b]_2$ are carried out in Section \ref{sec:HadamardDerivComp}. Some readers may prefer to skip Section \ref{sec:HadamardDerivComp} because it only contains Hadamard differentiability proofs and because the proof methodology is similar to that in \cite{Neuvial2008}.

With the Hadamard derivatives of $\Psi^{(\FDP)}$ and $\Psi^{(\FPR)}$ at $(F_0,F_1)$, we can apply the functional delta method (Theorem \ref{theorem:functional_delta_method}) to the result of Corollary \ref{cor:VDV_FCLT} to get a CLT for $\Psi^{(\FDP)}(\hat{F}_{m,0},\hat{F}_{m,1})$ and $\Psi^{(\FPR)}(\hat{F}_{m,0},\hat{F}_{m,1})$. However, we are primarily interested in CLTs for $\FDP_m$ and $V_m/m$, so we prove the following lemma to help us derive CLTs for $\FDP_m$ and $V_m/m$ using CLTs for $\Psi^{(\FDP)}(\hat{F}_{m,0},\hat{F}_{m,1})$ and $\Psi^{(\FPR)}(\hat{F}_{m,0},\hat{F}_{m,1})$.

\begin{lemma}\label{lemma:PsiConverenceInProb}
    Under the conditions of Theorem \ref{theorem:conditionalFDPCLT}, 
    \begin{align*}
    \sqrt{m}\big( \emph{\FDP}_m -\Psi^{(\emph{\FDP})}(\hat{F}_{m,0},\hat{F}_{m,1}) \big) &\xrightarrow{\mathcal{P}} 0\quad\text{and}\\
    \sqrt{m}\big( V_m/m -\Psi^{(\emph{\FPR})}(\hat{F}_{m,0},\hat{F}_{m,1}) \big) &\xrightarrow{\mathcal{P}} 0
    \end{align*}
    as $m \rightarrow \infty$, where  $\xrightarrow{\mathcal{P}}$ denotes convergence in probability in the sense of Definition \ref{def:VDV_convergence_in_prob}.
\end{lemma}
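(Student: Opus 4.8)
The plan is to show that, on an event whose probability tends to one, $V_m/m$ and $\FDP_m$ coincide \emph{exactly} with $\Psi^{(\FPR)}(\hat F_{m,0},\hat F_{m,1})$ and $\Psi^{(\FDP)}(\hat F_{m,0},\hat F_{m,1})$, so that the $\sqrt m$-scaled differences are identically zero there. Write $\hat G_m=\hat F_{m,0}+\hat F_{m,1}$ for the ECDF of the $p$-values and set $t_m^\ast\equiv\sup\{t\in[0,1]\such\hat G_m(t)\ge t/q\}$. By the characterization of $\tauBHm$ recalled in Section~\ref{sec:setup}, $t_m^\ast$ is the BH threshold (so $\hat G_m(t_m^\ast)\ge t_m^\ast/q$ and the rejection set is $\{i\such P_{mi}\le t_m^\ast\}$); hence $R_m=m\hat G_m(t_m^\ast)$, $V_m=m\hat F_{m,0}(t_m^\ast)$, and therefore $V_m/m=\hat F_{m,0}(t_m^\ast)$ and $\FDP_m=\hat F_{m,0}(t_m^\ast)/\max\{\hat G_m(t_m^\ast),1/m\}$. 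Meanwhile $\Psi^{(\FPR)}(\hat F_{m,0},\hat F_{m,1})=\hat F_{m,0}(\mathcal T(\hat G_m))$ and $\Psi^{(\FDP)}(\hat F_{m,0},\hat F_{m,1})=\hat F_{m,0}(\mathcal T(\hat G_m))/\hat G_m(\mathcal T(\hat G_m))$, where $\mathcal T(\hat G_m)=\sup\{u\in[a,b]\such\hat G_m(u)\ge u/q\}$ is the same supremum restricted to $[a,b]$. Since $t_m^\ast\le q<b$, once $t_m^\ast\ge a$ (so $t_m^\ast\in[a,b]$) we have $t_m^\ast\in\{u\in[a,b]\such\hat G_m(u)\ge u/q\}\subseteq\{u\in[0,1]\such\hat G_m(u)\ge u/q\}$, and taking suprema gives $\mathcal T(\hat G_m)=t_m^\ast$. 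The proof thus reduces to showing that $t_m^\ast\ge a$ and $R_m\ge1$ with probability tending to one.

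To do this, recall from Corollary~\ref{cor:VDV_FCLT} that $\sqrt m(\hat G_m-G)=\hat W_{m,0}+\hat W_{m,1}$ converges in distribution in $D[a,b]$ to $W_0+W_1\in C[a,b]$, whose $\sup$-norm is an almost surely finite random variable; consequently $\sup_{t\in[a,b]}|\hat G_m(t)-G(t)|\xrightarrow{\mathcal P}0$. Pick $\epsilon>0$ with $[\tau_\ast-\epsilon,\tau_\ast+\epsilon]\subset(a,b)$, which is possible because $\tau_\ast\in(a,q]\subset(a,b)$ by Proposition~\ref{prop:existance_ab}. By that proposition together with continuity of $G$ (Proposition~\ref{prop:G_is_continuous}), the continuous map $t\mapsto G(t)-t/q$ has a strictly positive minimum over the compact interval $[a,\tau_\ast-\epsilon]$ and a strictly negative maximum over $[\tau_\ast+\epsilon,b]$; let $\delta>0$ be smaller than both, and put $E_m=\{\sup_{t\in[a,b]}|\hat G_m(t)-G(t)|<\delta\}$, so $\Pr(E_m)\to1$. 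On $E_m$ we have $\hat G_m(\tau_\ast-\epsilon)>(\tau_\ast-\epsilon)/q$ and $\hat G_m(t)<t/q$ for all $t\in[\tau_\ast+\epsilon,b]$; combining the latter with the deterministic bound $\hat G_m(t)\le1<t/q$ for $t\in[b,1]$ (valid since $b>q$) gives $\hat G_m(t)<t/q$ for every $t\in[\tau_\ast+\epsilon,1]$. The first inequality shows $t_m^\ast\ge\tau_\ast-\epsilon$, and the second shows $t_m^\ast\le\tau_\ast+\epsilon$, so $t_m^\ast\in(a,b)$ on $E_m$ and hence $\mathcal T(\hat G_m)=t_m^\ast$ there. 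Moreover, on $E_m$ every $i$ with $P_{mi}\le\tau_\ast-\epsilon$ is rejected, so $R_m\ge m\hat G_m(\tau_\ast-\epsilon)\ge m(\tau_\ast-\epsilon)/q$, which is at least $1$ for all large $m$, and likewise $\hat G_m(t_m^\ast)\ge\hat G_m(\tau_\ast-\epsilon)\ge(\tau_\ast-\epsilon)/q>0$.

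Consequently, on $E_m$ and for $m$ large, $V_m/m=\hat F_{m,0}(t_m^\ast)=\hat F_{m,0}(\mathcal T(\hat G_m))=\Psi^{(\FPR)}(\hat F_{m,0},\hat F_{m,1})$, and, since $\max\{R_m,1\}=R_m$ and $\hat G_m(t_m^\ast)>0$, also $\FDP_m=\hat F_{m,0}(t_m^\ast)/\hat G_m(t_m^\ast)=\Psi^{(\FDP)}(\hat F_{m,0},\hat F_{m,1})$. Thus both $\sqrt m\big(V_m/m-\Psi^{(\FPR)}(\hat F_{m,0},\hat F_{m,1})\big)$ and $\sqrt m\big(\FDP_m-\Psi^{(\FDP)}(\hat F_{m,0},\hat F_{m,1})\big)$ vanish identically on $E_m$, and since $\Pr(E_m)\to1$ this yields the two convergences claimed. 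All the quantities involved---$V_m$, $R_m$, $t_m^\ast$, $\mathcal T(\hat G_m)$, and $\Psi^{(\FPR)},\Psi^{(\FDP)}$ evaluated at the ECDFs---are genuine random variables, so the outer-probability notation $\xrightarrow{\mathcal P}$ reduces here to ordinary convergence in probability.

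The main obstacle is the middle paragraph: pinning down $t_m^\ast\in[a,b]$ with high probability. The delicate point is that $\hat G_m\to G$ uniformly only on $[a,b]$, so a potential last crossing of the Simes line in $(b,1]$ has to be excluded ``by hand'' via $\hat G_m\le1$, and the identification $\mathcal T(\hat G_m)=t_m^\ast$ leans on the strict-crossing behaviour of $G$ at $\tau_\ast$ that is guaranteed by the construction of $[a,b]$ in Proposition~\ref{prop:existance_ab}. The remaining ingredients---the equivalence between the BH threshold and $t_m^\ast$, turning $\max\{R_m,1\}$ into $R_m$ (needed only for the $\FDP$ statement), and the measurability remark---are routine.
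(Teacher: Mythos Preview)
Your proof is correct and follows the same strategy as the paper: show that on an event of probability tending to $1$ one has $\tauBHm\in[a,b]$, so that $\mathcal T(\hat G_m)=\tauBHm$ and the $\sqrt m$-scaled differences vanish identically there. The paper does this slightly more economically, checking only the pointwise convergence $\hat G_m(a)\xrightarrow{p}G(a)>a/q$ at the single point $a$ (via the f.d.d.\ result, Proposition~\ref{prop:fdd_convergence}, plus Condition~\ref{cond:F0F1_fast_unif_conv}) to get $\tauBHm\ge a$, rather than invoking the full FCLT of Corollary~\ref{cor:VDV_FCLT} to localize $t_m^\ast$ near $\tau_\ast$; your additional localization is correct but unnecessary since $t_m^\ast\le q<b$ already holds deterministically.
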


\begin{proof}
To prove this, define $\eta \equiv (G(a)-a/q )/2>0$. 
We will show that whenever $\vert \hat{G}_m(a) -G(a) \vert < \eta$, both $\FDP_m=\Psi^{(\FDP)}(\hat{F}_{m,0},\hat{F}_{m,1})$ and $V_m/m =\Psi^{(\FPR)}(\hat{F}_{m,0},\hat{F}_{m,1})$. 
To do this, note that $\vert \hat{G}_{m}(a) - G(a) \vert < \eta \Rightarrow \hat{G}_m(a)> a/q$ because if $\vert \hat{G}_{m}(a) - G(a) \vert < \eta$, then $$\hat{G}_{m}(a) - \frac{a}{q}=\hat{G}_{m}(a)-G(a)+G(a)-\frac{a}{q} =\hat{G}_{m}(a)-G(a)+2 \eta > \eta >0.$$ 
Thus, letting $\tauBHm$ be the Benjamini-Hochberg threshold when applying the procedure at level $q$ to the first $m$ $p$-values (with ECDF $\hat{G}_m$), 
$$\vert \hat{G}_{m}(a) - G(a) \vert < \eta \Rightarrow \tauBHm = \sup \Bigr\{ t \in [0,1] \such  \hat{G}_m(t)  \geq \frac{t}{q} \Bigl\} \geq a.$$ 

Noting that $\tauBHm \leq b$ always (because $b>q$ and $\hat{G}_m \leq 1$), it follows that $\tauBHm=\sup \bigr\{ t \in [a,b] \such  \hat{G}_m(t)  \geq t/q \bigl\}=\mathcal{T}(\hat{G}_m)$ whenever $\vert \hat{G}_{m}(a) - G(a) \vert < \eta$. Now, since $$\FDP_m=\frac{\hat{F}_{m,0}(\tauBHm)}{\hat{G}_{m}(\tauBHm)} \quad \text{ and } \quad \ \Psi^{(\FDP)}(\hat{F}_{m,0},\hat{F}_{m,1})= \frac{\hat{F}_{m,0}(\mathcal{T}(\hat{G}_m))}{\hat{G}_{m}(\mathcal{T}(\hat{G}_m))},$$ while $$\frac{V_m}{m}=\hat{F}_{m,0}(\tauBHm) \quad \text{ and } \quad \ \Psi^{(\FPR)}(\hat{F}_{m,0},\hat{F}_{m,1})= \hat{F}_{m,0}(\mathcal{T}(\hat{G}_m)),$$ it is easy to see that whenever $\vert \hat{G}_{m}(a) - G(a) \vert < \eta$, both $\FDP_m=\Psi^{(\FDP)}(\hat{F}_{m,0},\hat{F}_{m,1})$ and $V_m/m =\Psi^{(\FPR)}(\hat{F}_{m,0},\hat{F}_{m,1})$.

Now note that by Proposition \ref{prop:fdd_convergence}, $\big(W_{m,0}(a),W_{m,1}(a) \big) \xrightarrow{d} \big(\tilde{W}_0(a),\tilde{W}_1(a) \big)$. Since Condition~\ref{cond:F0F1_fast_unif_conv} implies $\sqrt{m} \big( F_{m,0}(a)-F_0(a),F_{m,1}(a)-F_1(a) \big) \xrightarrow{p} 0$, by Slutsky's lemma we get  
$$ \begin{bmatrix} \hat{W}_{m,0}(a) \\ \hat{W}_{m,1}(a) \end{bmatrix}=  \begin{bmatrix} W_{m,0}(a) \\ W_{m,1}(a) \end{bmatrix} +\sqrt{m} \begin{bmatrix} F_{m,0}(a)-F_0(a) \\ F_{m,1}(a)-F_1(a) \end{bmatrix}  \xrightarrow{d}  \begin{bmatrix} \tilde{W}_0(a) \\ \tilde{W}_1(a)  \end{bmatrix}.$$ 
Then the continuous mapping theorem implies that $\hat{W}_{m,0}(a)+\hat{W}_{m,1}(a) \xrightarrow{d} \tilde{W}_0(a)+\tilde{W}_1(a)$ or, equivalently, that $\sqrt{m} \big(\hat{G}_m(a)-G(a)\big) \xrightarrow{d} \tilde{W}_0(a)+\tilde{W}_1(a)$ where $\tilde{W}_0(a)+\tilde{W}_1(a)$ is a Guassian random variable with finite variance. It follows that $\vert \hat{G}_m(a)-G(a) \vert \xrightarrow{p} 0$ which, by the definition of convergence in probability, implies that $\lim_{m \rightarrow \infty} \mathbb{P}(\vert \hat{G}_m(a)-G(a) \vert \geq \eta)=0$. Now fixing $\epsilon>0$ and letting $\mathbb{P}^*$ denote outer probability measure, it follows that 
$$\begin{aligned} 0= & \limsup_{m \rightarrow \infty} \mathbb{P}(\vert \hat{G}_m(a)-G(a) \vert \geq \eta) \\ \geq & \limsup_{m \rightarrow \infty} \mathbb{P}^* \Big( \FDP_m \neq \Psi^{(\FDP)}(\hat{F}_{m,0},\hat{F}_{m,1}) \Big) \\ \geq & \limsup_{m \rightarrow \infty} \mathbb{P}^* \Big( \big| \sqrt{m} \big(\FDP_m -\Psi^{(\FDP)}(\hat{F}_{m,0},\hat{F}_{m,1}) \big) \big| > \epsilon  \Big) \end{aligned}$$
by the fact that $\FDP_m \neq \Psi^{(\FDP)}(\hat{F}_{m,0},\hat{F}_{m,1}) \Rightarrow \vert \hat{G}_m(a)-G(a) \vert \geq \eta$ and by monotonicity of outer probability measure. A similar argument shows that, $$\begin{aligned} 0 \geq \limsup_{m \rightarrow \infty} \mathbb{P}^*\big( \big| \sqrt{m} \big({V_m}/{m} -\Psi^{(\FPR)}(\hat{F}_{m,0},\hat{F}_{m,1}) \big) \big| > \epsilon  \big). \end{aligned}$$ 
Since this holds for any $\epsilon>0$ and since outer measure is nonnegative, by the definition of convergence in probability in the sense of Definition \ref{def:VDV_convergence_in_prob} we get the desired result. 
\end{proof}

\subsection{Proofs of Theorems \ref{theorem:conditionalFDPCLT} and \ref{theorem:conditionalFPR_CLT}}

Assume that all of the conditions of Theorem \ref{theorem:conditionalFDPCLT} hold. 
By our definition for $\hat{W}_{m,0}$ and $\hat{W}_{m,1}$ in \eqref{eq:defws} and by Corollary \ref{cor:VDV_FCLT},
\begin{equation}\label{eq:applyFunctionDmethodBefore}
    \sqrt{m} \big( (\hat{F}_{m,0}  , \hat{F}_{m,1})  -   (F_0, F_1)   \big) \xrightarrow{\mathcal{D}}  (W_0,W_1) 
\end{equation} where $(W_0,W_1) \in C[a,b]_2$ and is a joint Gaussian process with mean zero and joint covariance kernel $c$.

Since according to Propositions \ref{prop:FDPHadDeriv} and \ref{prop:FPRHadDeriv}, both $\Psi^{(\FDP)}$ and $\Psi^{(\FPR)}$ are Hadamard differentiable at $(F_0,F_1)$ tangentially to $C[a,b]_2$, by applying the functional delta method (Theorem \ref{theorem:functional_delta_method}) to \eqref{eq:applyFunctionDmethodBefore} we get \begin{equation}\label{eq:FDPapplyFDelta}
    \sqrt{m} \Big( \Psi^{(\FDP)}(\hat{F}_{m,0}  , \hat{F}_{m,1})-\Psi^{(\FDP)}(F_0,F_1) \Big) \xrightarrow{\mathcal{D}}  \dot{\Psi}_{(F_0,F_1)}^{(\FDP)}(W_0,W_1)
\end{equation} 
and 
\begin{equation}\label{eq:FPRapplyFDelta}
    \sqrt{m} \Big( \Psi^{(\FPR)}(\hat{F}_{m,0}  , \hat{F}_{m,1})-\Psi^{(\FPR)}(F_0,F_1) \Big) \xrightarrow{\mathcal{D}}  \dot{\Psi}_{(F_0,F_1)}^{(\FPR)}(W_0,W_1).
\end{equation}

Now $\sqrt{m}\big( \FDP_m -\Psi^{(\FDP)}(\hat{F}_{m,0},\hat{F}_{m,1}) \big) \xrightarrow{\mathcal{P}} 0$ and $\sqrt{m}\big( V_m/m -\Psi^{(\FPR)}(\hat{F}_{m,0},\hat{F}_{m,1}) \big) \xrightarrow{\mathcal{P}} 0$, by Lemma \ref{lemma:PsiConverenceInProb}, so adding these terms that converge to $0$ in probability to the left hand sides of \eqref{eq:FDPapplyFDelta} and \eqref{eq:FPRapplyFDelta}, respectively, and applying Slutsky's lemma (Lemma \ref{lemma:Slutsky_new}), we find that  \begin{equation}\label{eq:FDPCLTunsimplified}
    \sqrt{m} \Big( \FDP_m-\Psi^{(\FDP)}(F_0,F_1) \Big) \xrightarrow{\mathcal{D}}  \dot{\Psi}_{(F_0,F_1)}^{(\FDP)}(W_0,W_1)
\end{equation} and \begin{equation}\label{eq:FPRCLTunsimplified}
    \sqrt{m} \Big( \frac{V_m}{m}-\Psi^{(\FPR)}(F_0,F_1) \Big) \xrightarrow{\mathcal{D}}  \dot{\Psi}_{(F_0,F_1)}^{(\FPR)}(W_0,W_1).
\end{equation}
Now noting that $\Psi^{(\FDP)}(F_0,F_1)=F_0(\tau_*)/G(\tau_*)=q F_0(\tau_*)/\tau_*$ and that $\Psi^{(\FPR)}(F_0,F_1)=F_0(\tau_*)$ and recalling the formulas for $\dot{\Psi}_{(F_0,F_1)}^{(\FDP)}$ and $\dot{\Psi}_{(F_0,F_1)}^{(\FPR)}$ from Propositions \ref{prop:FPRHadDeriv} and \ref{prop:FDPHadDeriv}, \eqref{eq:FDPCLTunsimplified} and \eqref{eq:FPRCLTunsimplified} simplify to  \begin{equation}\label{eq:FDPCLTsimplified}
    \sqrt{m} \Big( \FDP_m-\frac{qF_0(\tau_*) }{\tau_*} \Big) \xrightarrow{\mathcal{D}}  \frac{q}{\tau_*} \Big( (1+\alpha) W_0(\tau_*) + \alpha W_1(\tau_*) \Big)
\end{equation} and \begin{equation}\label{eq:FPRCLTsimplified}
    \sqrt{m} \Big( \frac{V_m}{m}-F_0(\tau_*) \Big) \xrightarrow{\mathcal{D}}  W_0(\tau_*)+\frac{F_0'(\tau_*)}{1/q-G'(\tau_*)} \big(W_0(\tau_*)+W_1(\tau_*) \big).
\end{equation}

We now remark that for a finite $m$, $\FDP_m$ and $V_m$ are Borel measurable random variables, because they take on finitely many possible values and each value they take on can be written as finite unions and intersections of measurable sets. Because $\FDP_m$ and $V_m$ are Borel measurable random variables, the notion of convergence in distribution in Definition \ref{def:VDV_weak_convergence} that is referenced in \eqref{eq:FDPCLTsimplified} and \eqref{eq:FPRCLTsimplified} is interchangeable with the standard notion of convergence in distribution, which we denote with the symbol $\xrightarrow{d}$. Combining this with the fact that $(W_0,W_1)$ is a joint Gaussian process with mean zero and joint covariance kernel $c$, we get that \eqref{eq:FDPCLTsimplified} and \eqref{eq:FPRCLTsimplified} 
simplify to  \begin{equation}\label{eq:FDPCLTsimplifiedMore}
    \sqrt{m} \Big( \FDP_m-\frac{qF_0(\tau_*) }{\tau_*} \Big) \xrightarrow{d} \dnorm(0,\sigma_L^2) 
\end{equation} and \begin{equation}\label{eq:FPRCLTsimplifiedMore}
    \sqrt{m} \Big( \frac{V_m}{m}-F_0(\tau_*) \Big) \xrightarrow{d}  \dnorm(0,\sigma_R^2)
\end{equation}
where 
$$\sigma_L^2 \equiv \frac{q^2}{\tau_*^2} \Big( (1+\alpha)^2 c^{(0,0)}(\tau_*,\tau_*)+2 \alpha (1+\alpha)c^{(1,0)}(\tau_*,\tau_*) +\alpha^2c^{(1,1)}(\tau_*,\tau_*)  \Big) $$
and where, defining $\beta \equiv F_0'(\tau_*)/(1/q-G'(\tau_*))$, 
$$\sigma_R^2 \equiv  (1+\beta)^2 c^{(0,0)}(\tau_*,\tau_*)+2 \beta (1+\beta)c^{(1,0)}(\tau_*,\tau_*) +\beta^2c^{(1,1)}(\tau_*,\tau_*).$$ 
Since equations \eqref{eq:FDPCLTsimplifiedMore} and \eqref{eq:FPRCLTsimplifiedMore} hold under the conditions of Theorem~\ref{theorem:conditionalFDPCLT}, we have thus proved Theorems \ref{theorem:conditionalFDPCLT} and \ref{theorem:conditionalFPR_CLT}. 
\qed

\subsection{Proof of Theorem \ref{theorem:tau_star_0_res}}

Set $[a,b]=[0,1]$ and fix the $\bswu=\bswl \in \real^k$ on which we condition and suppose that the conditions of Theorem \ref{theorem:tau_star_0_res} hold.  Fix $\epsilon \in (0,q)$, and we will show that $\lim_{m \rightarrow \infty} \Pr( \tauBHm \leq \epsilon  )=1$. To do this, define $\psi_G : [0,1] \rightarrow \real$, to be the function given by $\psi_G(t)=G(t)-t/q$. Since $\tau_*=0$, $\psi_G(t) < 0$ for all $t \in (0,1)$. By Proposition \ref{prop:G_is_continuous}, $\psi_G$ is continuous on $[\epsilon,q]$, so by the extreme value theorem, $\psi_G$ attains a maximum at some $t_{\text{max}} \in [\epsilon,q]$. Because $\psi_G(t_{\text{max}}) < 0$, it follows that letting $\delta = -\psi_G(t_{\text{max}}) > 0$, $\psi_G(t) \leq -\delta$ for all $t \in [\epsilon,q]$.
    
Now for $l \in \mathbb{N}$, define $t_l = lq\delta/2$, and let $L_{\delta} = \{ l \such t_l \in [\epsilon,q] \}$. Let $T_{\delta}=\{q \} \cup \{t_l \such l \in L_{\delta} \}$. $T_{\delta}$ is a finite set. Let $$A_m =  \bigcap\limits_{t \in T_{\delta}} \{ \hat{G}_m(t) -t/q < -\delta/2 \}.$$ We will first show that $A_m \subseteq \{ \tauBHm \leq \epsilon \}$, and then we will show that $\lim_{m \rightarrow \infty} \Pr( \tauBHm \leq \epsilon  )=1$. To prove the former, suppose the event $A_m$ occurs and take any $t \in [\epsilon,1]$. If $t > q$, then $\hat{G}_m(t)-t/q <0$, since $\hat{G}_m(t) \leq 1$. If instead $t \in [\epsilon, q]$, there exists a $t_{\uparrow} \in T_{\delta}$ such that $t_{\uparrow}-t \in [0,q \delta/2)$. Therefore, since $\hat{G}_m$ is non-decreasing, 
$$\begin{aligned} \hat{G}_m(t) - \frac{t}{q} \leq \hat{G}_m(t_{\uparrow})-\frac{t_{\uparrow}}{q}+\frac{t_{\uparrow}-t}{q} < -\frac{\delta}{2}+\frac{\delta}{2} <0 \end{aligned},$$ where the second to last inequality holds under the event $A_m$. 
Thus under the event $A_m$, $\hat{G}_m(t)-t/q <0$ for all $t \in [\epsilon,1]$, implying that 
$A_m \subseteq \{ \tauBHm \leq \epsilon \}$.
    
To show that $\lim_{m \rightarrow \infty} \Pr( \tauBHm \leq \epsilon  )=1$, note that the proof of Proposition \ref{prop:fdd_convergence} holds when Conditions \ref{cond:boundedL}, \ref{cond:niceQandgamma}, \ref{cond:F0F1_defined}, and \ref{cond:cov_kernel_converges} hold, implying that $(W_{m,0},W_{m,1}) \xrightarrow{\fdd} (\tilde{W}_0,\tilde{W}_1)$ where $(\tilde{W}_0,\tilde{W}_1)$ is a joint Guassian process on $[0,1]$ with mean zero and joint covariance kernel $c$. Since Condition \ref{cond:F0F1_fast_unif_conv} holds for $[a,b]=[0,1]$, by Slutsky's lemma, $(\hat{W}_{m,0},\hat{W}_{m,1}) \xrightarrow{\fdd} (\tilde{W}_0,\tilde{W}_1)$. By the continuous mapping theorem, $\hat{W}_{m,0}+\hat{W}_{m,1} \xrightarrow{\fdd} \tilde{W}_0+\tilde{W}_1$ which implies that $\big( \hat{W}_{m,0}(t) +\hat{W}_{m,1}(t) \big)_{t \in T_{\delta}}$ converges in distribution to a multivariate Gaussian. Since $\hat{W}_{m,0}(t) +\hat{W}_{m,1}(t)= \sqrt{m} \big( \hat{G}_m(t)-G(t) \big)$ and since $T_{\delta}$ is a finite set, if we let $B_m$ be the event that $\vert \hat{G}_m(t)-G(t) \vert < \delta/2$ for all $t \in T_{\delta}$, it is clear that $\lim_{m \rightarrow \infty} \Pr(B_m)=1$. Now note that if $B_m$ occurs $A_m$ occurs, since $G(t)-t/q \leq -\delta$ for all $t \in [\epsilon,q]$. Because $B_m \subseteq A_m \subseteq \{ \tauBHm \leq \epsilon \}$ and $\lim_{m \rightarrow \infty} \Pr(B_m)=1$, $\lim_{m \rightarrow \infty} \Pr( \tauBHm \leq \epsilon  )=1$. The above argument holds for any $\epsilon >0$, so by nonnegativity of $\tauBHm$, $\tauBHm \xrightarrow{p} 0$.
    
To show that $V_m/m \xrightarrow{p} 0$, fix $\epsilon>0$. Note that if $\tauBHm \leq q \epsilon/2$, then $$\frac{V_m}{m} = \hat{F}_{m,0}(\tauBHm) \leq \hat{G}_m(\tauBHm) \leq \hat{G}_m(q \epsilon) < \frac{q\epsilon}{q}=\epsilon$$ 
where the last inequality holds because, if it did not hold, there would be a contradiction to $\tauBHm \leq q \epsilon/2$. 
Therefore, $\tauBHm \leq q \epsilon/2 \Rightarrow V_m/m \leq \epsilon$, and so by the result of the previous paragraph, $\lim_{m \rightarrow \infty} \Pr( V_m/m \leq \epsilon  )=1$. 
By nonnegativity of $V_m/m$ it follows that $V_m/m \xrightarrow{p} 0$. 
\qed

\section{Hadamard derivative calculations for proofs of Theorems \ref{theorem:conditionalFDPCLT} and \ref{theorem:conditionalFPR_CLT}}\label{sec:HadamardDerivComp}

Recall that our goal is to compute the Hadamard derivative of both $\Psi^{(\FPR)}$ and $\Psi^{(\FDP)}$ at the point $(F_0,F_1) \in C[a,b]_2$ tangentially to $C[a,b]_2$. We start by computing the Hadamard derivative of $\mathcal{T}$ (defined in Equation \eqref{eq:mathcalTDef}) at $G$ tangentially to $C[a,b]$, but to do this, it first helps to prove the following modified version of Lemma 7.7 and Proposition 7.8 in \cite{Neuvial2008}.

\begin{lemma}\label{lemma:Neuivial_Helper}
    For any $F \in D[a,b]$ such that $\mathcal{T}(F) \in (a,b)$, one of the following must hold:
    
    \begin{enumerate}
        \item[(i)]  $F( \mathcal{T}(F))= \mathcal{T}(F)/q$, or
        \item[(ii)] $F( \mathcal{T}(F)) \leq  \mathcal{T}(F)/q \leq \lim_{t \uparrow \mathcal{T}(F) } F(t)$, 
\end{enumerate}  where case (i) will hold whenever $F$ is monotone nondecreasing.
\end{lemma}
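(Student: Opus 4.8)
Write $\tau = \mathcal{T}(F) \in (a,b)$ and set $A = \{u \in [a,b] : F(u) \ge u/q\}$, so that $\tau = \sup A$. The plan is to argue by the dichotomy $F(\tau) \ge \tau/q$ versus $F(\tau) < \tau/q$, which is clearly exhaustive, and to show the first case yields conclusion (i) and the second yields conclusion (ii). Note first that $A \ne \emptyset$ since $\tau > a$, and that $\tau < b$ leaves room to the right of $\tau$ inside $[a,b]$; both facts will be used.

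\textbf{Case $F(\tau) \ge \tau/q$.} Here I would show the inequality is an equality. If $F(\tau) > \tau/q$, then since $F$ is càdlàg it is right-continuous at $\tau$ and $u \mapsto u/q$ is continuous, so there is $\delta > 0$ with $\tau + \delta \le b$ and $F(u) > u/q$ for all $u \in [\tau, \tau+\delta)$; this contradicts $\tau = \sup A$. Hence $F(\tau) = \tau/q$, which is (i).

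\textbf{Case $F(\tau) < \tau/q$.} Then $\tau \notin A$, so because $\tau = \sup A$ there is a sequence $u_n \uparrow \tau$ with $u_n < \tau$ and $F(u_n) \ge u_n/q$ for every $n$ (pick $u_n \in A \cap (\tau - 1/n,\, \tau)$). Since a càdlàg $F$ has a left limit at $\tau$, $\lim_{t \uparrow \tau} F(t) = \lim_n F(u_n) \ge \lim_n u_n/q = \tau/q$. Combined with $F(\tau) < \tau/q$ this gives $F(\tau) \le \tau/q \le \lim_{t\uparrow\tau} F(t)$, which is (ii). Finally, if $F$ is monotone nondecreasing then $\lim_{t\uparrow\tau}F(t) \le F(\tau)$, so in (ii) we would need $\tau/q \le F(\tau) \le \tau/q$, i.e.\ (i); thus (i) always holds in that case.

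\textbf{Main obstacle.} There is no deep difficulty here; the delicate points are purely the careful use of the càdlàg structure — right-continuity at $\tau$ to rule out a strict overshoot in the first case, and the existence of the left limit in the second case — together with making sure $\tau \in (a,b)$ supplies the needed room on each side and that $A$ is nonempty. I expect the bookkeeping around the supremum (that $\tau \notin A$ forces an approximating sequence strictly below $\tau$) to be the only place one must be precise.
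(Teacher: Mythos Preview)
Your proposal is correct and follows essentially the same approach as the paper's proof: both rule out $F(\tau)>\tau/q$ by right-continuity, then in the strict-inequality case use a sequence in $A$ approaching $\tau$ from below together with existence of the left limit to obtain (ii), and finally observe that monotonicity forces $\lim_{t\uparrow\tau}F(t)\le F(\tau)$, collapsing (ii) to (i). The only cosmetic difference is that the paper first proves $F(\tau)\le\tau/q$ unconditionally and then splits on equality versus strict inequality, whereas you split on $F(\tau)\ge\tau/q$ versus $F(\tau)<\tau/q$; the content is identical.
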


\begin{proof}
Fix $F \in D[a,b]$ such that $\mathcal{T}(F) \in (a,b)$. 
Assume by way of contradiction that $F (  \mathcal{T}(F) ) > \mathcal{T}(F)/q$. By right continuity of $t \mapsto F(t)-t/q$, there exists a sequence of $t_n \downarrow \mathcal{T}(F)$ (where $t_n \in (a,b)$ for all $n$) such that $\lim_{n \rightarrow \infty} F(t_n)-t_n/q = F (  \mathcal{T}(F) )-\mathcal{T}(F)/q> 0$, implying that for some $n \in \mathbb{N}$, $F(t_n)-t_n/q>0$. 
But that implies that for some $t_n \in (a,b)$ and $t_n >  \mathcal{T}(F)$, $F(t_n) \geq t_n/q$, contradicting the definition of $\mathcal{T}(F)$. 
Hence it follows that $F (  \mathcal{T}(F) ) \leq \mathcal{T}(F)/q$ always. 
If  $F (  \mathcal{T}(F) ) = \mathcal{T}(F)/q$, then we are in case (i), and otherwise, $F (  \mathcal{T}(F) ) < \mathcal{T}(F)/q$. 
Now if $F (  \mathcal{T}(F) ) < \mathcal{T}(F)/q$, by definition of $\mathcal{T}(F)$, there exists a sequence of $t_n \uparrow \mathcal{T}(F)$ such that $F(t_n)- t_n/q \geq 0$. 
So taking the limit as $n \rightarrow \infty$ of each side, since $F$ always has existing left limits, $\lim_{t \uparrow \mathcal{T}(F)} F(t) - \mathcal{T}(F)/q \geq 0$. Hence whenever $F (  \mathcal{T}(F) ) < \mathcal{T}(F)/q$, $ \mathcal{T}(F)/q \leq \lim_{t \uparrow \mathcal{T}(F) } F(t)$, so case (ii) will hold. When $F$ is monotone nondecreasing case (i) holds because the RHS in case (ii) is bounded above by $F( \mathcal{T}(F))$ for monotone nondecreasing $F$.
\end{proof}

Using this Lemma and an approach similar to \cite{Neuvial2008}, we can calculate the Hadamard derivative of $\mathcal{T}$ at $G$ tangentially to $C[a,b]$. The proposition below involves the following constant which will be convenient to define for the remainder of the supplement:

\begin{equation}\label{eq:cG_def}
    c_G \equiv 1/q-G'(\tau_*)>0
\end{equation}

By Condition \ref{cond:F0F1_differfentiable}
$c_G$ is well defined. To see why this constant is positive, note that by Condition \ref{cond:noaccum}, Proposition \ref{prop:existance_ab} and our definitions of $\tau_*$ and the interval $[a,b]$, $\tau_*$ is the unique point in $[a,b]$ where the function $t \mapsto G(t) -t/q$ crosses from a positive to negative value (and does not merely touch zero).

\begin{proposition}\label{prop:HadDeriv_T}
   $\mathcal{T}$ is Hadamard differentiable at $G$, tangentially to $C[a,b]$ with Hadamard derivative at G given by $\dot{\mathcal{T}}_G(H)= H(\tau_*)/c_G$ for $H \in C[a,b]$.
\end{proposition}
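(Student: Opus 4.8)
The plan is to verify the definition of Hadamard differentiability (Definition \ref{def:Hadamard_Differentiability}) directly. Fix $H \in C[a,b]$ and a collection $(H_t)_{t>0} \subseteq D[a,b]$ with $\|H_t - H\|_\infty \to 0$ as $t \downarrow 0$; I need to show that
\[
\frac{\mathcal{T}(G + tH_t) - \mathcal{T}(G)}{t} \longrightarrow \frac{H(\tau_*)}{c_G} \quad\text{as } t\downarrow 0,
\]
where $\mathcal{T}(G) = \tau_*$. Write $\tau_t \equiv \mathcal{T}(G + tH_t)$ and $h_t \equiv (\tau_t - \tau_*)/t$; the goal is $h_t \to H(\tau_*)/c_G$. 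First I would record the consequences of our choice of $[a,b]$ guaranteed by Proposition \ref{prop:existance_ab}: $G(a) > a/q$, $G(b) < b/q$, $G$ is continuous on $(0,1)$ (Proposition \ref{prop:G_is_continuous}), $\tau_* \in (a,b)$ is the \emph{unique} crossing of the Simes line in $[a,b]$ with $G(\tau_*) = \tau_*/q$, $G(u) > u/q$ for $u \in [a,\tau_*)$ and $G(u) < u/q$ for $u \in (\tau_*,b]$, and finally that $G$ is differentiable at $\tau_*$ with $G'(\tau_*) = 1/q - c_G < 1/q$, i.e. $c_G > 0$.

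The first real step is to show $\tau_t \to \tau_*$, i.e. that $\mathcal{T}$ is continuous at $G$ along this sequence. Since $\|tH_t\|_\infty = t\|H_t\|_\infty \to 0$, the perturbed function $G + tH_t$ is uniformly close to $G$. Because $G(a) - a/q > 0$ is a fixed positive number, for $t$ small we get $(G+tH_t)(a) > a/q$, so $\tau_t \geq a$; and $\tau_t \leq b$ automatically if $b > q$, but more carefully one uses that $G(u) - u/q$ is bounded away from $0$ on $[\tau_*+\delta, b]$ for each $\delta>0$ (by continuity and uniqueness of the crossing, using that $G - \mathrm{id}/q$ is strictly negative there and attains its max), so for $t$ small $(G+tH_t)(u) < u/q$ on that set, forcing $\tau_t < \tau_* + \delta$. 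Similarly $G - \mathrm{id}/q$ is bounded below by a positive constant on $[a, \tau_*-\delta]$, so $(G+tH_t)(u) > u/q$ somewhere in $[\tau_*-\delta,\tau_*)$ for $t$ small, giving $\tau_t \geq \tau_* - \delta$. Hence $\tau_t \to \tau_*$.

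The second and main step is to extract the rate. By Lemma \ref{lemma:Neuivial_Helper} applied to $F = G + tH_t$ (whose value $\mathcal{T}(F) = \tau_t$ lies in $(a,b)$ for $t$ small), either $(G+tH_t)(\tau_t) = \tau_t/q$, or the two-sided sandwich $(G+tH_t)(\tau_t) \leq \tau_t/q \leq \lim_{u\uparrow\tau_t}(G+tH_t)(u)$ holds. Since $G$ is continuous at $\tau_*$ and $\tau_t \to \tau_*$, and since $\|tH_t\|_\infty \to 0$, in either case one obtains $(G+tH_t)(\tau_t) = \tau_t/q + o(t)$ and, using the left-limit version together with continuity of $G$ near $\tau_*$, also $\lim_{u\uparrow\tau_t}(G+tH_t)(u) = \tau_t/q + o(t)$; the key point is that $G$ has \emph{no jump} near $\tau_*$ so the left limit of $G$ at $\tau_t$ equals $G(\tau_t) + o(1)$, and the $tH_t$ contribution to any gap is $O(t\cdot o(1))$ once one also uses $\|H_t - H\|_\infty\to 0$ and continuity of... actually the cleanest route: write $(G+tH_t)(\tau_t) - \tau_t/q = \big(G(\tau_t) - \tau_t/q\big) + t H_t(\tau_t)$. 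Now $G(\tau_t) - \tau_t/q = G(\tau_t) - G(\tau_*) - (\tau_t-\tau_*)/q = (\tau_t - \tau_*)\big(G'(\tau_*) + o(1)\big) - (\tau_t-\tau_*)/q = -(\tau_t - \tau_*)\big(c_G + o(1)\big)$ by differentiability of $G$ at $\tau_*$, and $tH_t(\tau_t) = tH(\tau_*) + o(t)$ since $H_t \to H$ uniformly and $H$ is continuous at $\tau_*$ with $\tau_t \to \tau_*$. The left-endpoint inequality from Lemma \ref{lemma:Neuivial_Helper} shows this quantity is $\geq -o(t)$ (since the left limit exceeds $\tau_t/q$, so $G + tH_t$ at points just below $\tau_t$ is $\geq \tau_t/q$, and continuity of $G$ at $\tau_*$ pushes the left limit to within $o(1)$ of $G(\tau_t)$), while the definition of $\tau_t$ and the right-continuity argument of Lemma \ref{lemma:Neuivial_Helper} show it is $\leq 0$. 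Combining,
\[
-(\tau_t-\tau_*)(c_G + o(1)) + tH(\tau_*) + o(t) = o(t),
\]
so dividing by $t$ and using $c_G > 0$ gives $h_t = (\tau_t-\tau_*)/t \to H(\tau_*)/c_G$. Linearity of $H \mapsto H(\tau_*)/c_G$ is immediate, completing the verification.

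\textbf{Expected main obstacle.} The delicate part is controlling the càdlàg perturbation $G + tH_t$ near $\tau_t$: Lemma \ref{lemma:Neuivial_Helper} only gives the two-sided sandwich (case (ii)), not an exact equation, because $G + tH_t$ need not be monotone, and $H_t \in D[a,b]$ may have jumps. The resolution is that the \emph{limit} $G$ is continuous at $\tau_*$ (Proposition \ref{prop:G_is_continuous}) and differentiable there (Condition \ref{cond:F0F1_differfentiable}), so the gap $\lim_{u\uparrow\tau_t} (G+tH_t)(u) - (G+tH_t)(\tau_t)$ equals $\lim_{u\uparrow\tau_t}G(u) - G(\tau_t)$ plus a term that is $o(t)$ — and the first piece is $o(1)$ by continuity of $G$, which combined with the $1/q$-Lipschitz comparison forces the sandwich to collapse to $o(t)$ precision. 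Making this ``gap is $o(t)$'' estimate rigorous — i.e. showing the left-limit bound and the upper bound both pin $G(\tau_t) - \tau_t/q + tH_t(\tau_t)$ to $o(t)$ — is where the care is needed; everything else is bookkeeping with the first-order Taylor expansion of $G$ at $\tau_*$.
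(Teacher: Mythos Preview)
Your proposal is correct and follows essentially the same approach as the paper's proof: both establish $\tau_t\to\tau_*$ first, then use the sandwich from Lemma~\ref{lemma:Neuivial_Helper} together with continuity of $G$ and $H$ to show $\tau_t/q-G(\tau_t)=tH(\tau_*)+o(t)$, and finally invoke differentiability of $G$ at $\tau_*$ to solve for $(\tau_t-\tau_*)/t$. The only cosmetic difference is that the paper separates the last step into two limits, $\psi_G(\tau_t)/t\to H(\tau_*)$ and $\psi_G(\tau_t)/(\tau_t-\tau_*)\to c_G$, and then takes their ratio, whereas you substitute the Taylor expansion $G(\tau_t)-\tau_t/q=-(\tau_t-\tau_*)(c_G+o(1))$ directly into the equation and solve; the content is identical.
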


\begin{proof}
    
    To prove that the Hadamard derivative of $\mathcal{T}$ at $G$ tangentially to $C[a,b]$ is given by $\dot{\mathcal{T}}_G(H)= H(\tau_*)/c_G$, fix any $H \in C[a,b]$ and fix any collection $(H_t)_{t>0}$ where $H_t \in D[a,b]$ for all $t$ and $ \lim_{t \downarrow 0} \Vert  H_t -H \Vert_{\infty} = 0$. We must show that \begin{equation}\label{eq:T_is_HadDiff}
        \lim_{t \downarrow 0} \Big| \frac{\mathcal{T}(G+tH_t)-\mathcal{T}(G)}{t} -  \frac{H(\tau_*)}{c_G} \Big|=0.
    \end{equation} 
    To do this, define $G_t \equiv G+tH_t$ and $\tau_t \equiv \mathcal{T}(G_t)$ for $t>0$. 
    Also define for any $F \in D[a,b]$,  $\psi_F : [a,b] \rightarrow \real$ to be given by $\psi_F(u)={u}/{q} -F(u)$. 
    Note $\mathcal{T}(F)=\sup \{ u \in [a,b], \psi_F(u) \leq 0 \}$. 
    We must first show that there exists a $\delta>0$ such that for all  $t \in (0, \delta)$, $\tau_t \in (a,b)$. 
    To see this, note that by our choice of interval $(a,b)$ (see Section \ref{sec:Define_ab_subsection}, and more specifically, see Proposition \ref{prop:existance_ab}), $G(a) > {a}/{q} \Rightarrow \psi_G(a) < 0$, $G(b) < {b}/{q} \Rightarrow \psi_G(b) >0 $, and $\psi_G(\cdot)$ crosses $0$ once on the interval $[a,b]$. 
    $\psi_G$ is continuous by continuity of $G$ (see Proposition \ref{prop:G_is_continuous}), so there exists a $b_- < b$ such that $\psi_G(u)> \frac{1}{2} \psi_G(b)$ for all $u \in [b_-,b]$. Also note that 
    $$\Vert G- G_t \Vert_{\infty}=\Vert t H_t \Vert_{\infty} \leq t \Vert  H \Vert_{\infty} + t \Vert H-H_t \Vert_{\infty}.$$ 
    Since $H \in C[a,b]$, $\Vert H \Vert_{\infty} < \infty$, so $\lim_{t \downarrow 0} t \Vert  H \Vert_{\infty}=0$. Hence, taking the limit as $t \downarrow 0$ of the above result we get that $\lim_{t \downarrow 0} \Vert G- G_t \Vert_{\infty} =0$. 
    Thus, we can pick $\delta>0$ such that for all $t \in (0, \delta)$, $ \Vert G- G_t \Vert_{\infty} < \min \{ \vert \psi_G(a) \vert ,  \vert \psi_G(b) \vert/2 \}$. 
    Since for all $u \in [a,b]$, $\vert \psi_{G_t}(u) - \psi_G(u) \vert \leq  \Vert G- G_t \Vert_{\infty}$, we get that for all $t \in (0,\delta)$ and $u \in [b_-,b]$, $\psi_{G_t}(a) < \psi_{G}(a)+ \vert \psi_G(a) \vert =0$ and $\psi_{G_t}(u) >  \psi_{G}(u)- \psi_G(b)/2 >0$. 
    Thus, for all $t \in (0,\delta)$ and $u \in [b_-,b]$, both $\psi_{G_t}(a) < 0$ and $\psi_{G_t}(u)>0$, where $\psi_{G_t}$ is right continuous with left limits. 
    This implies that $\tau_t =\mathcal{T}(G_t) \in (a,b)$ for all $t \in (0,\delta)$.
    
    For any $t \in (0,\delta)$, we will next apply Lemma \ref{lemma:Neuivial_Helper} to $G_t$ to show that $ \vert \psi_G(\tau_t) \vert \leq \Vert G- G_t \Vert_{\infty}$. 
    If case (ii) of Lemma \ref{lemma:Neuivial_Helper} holds for $G_t$, then $$-\lim_{u \uparrow \mathcal{T}(G_t) } G_t(u) \leq -  \frac{\mathcal{T}(G_t)}{q} \leq  -G_t( \mathcal{T}(G_t)),$$ so adding $G( \tau_t)=G(\mathcal{T}(G_t))$ to each side of the equality and by continuity of $G$, we get $$\lim_{u \uparrow \mathcal{T}(G_t) } \big(  (G-G_t)(u) \big) \leq G(\tau_t)-  \frac{\tau_t}{q} \leq (G-G_t) (\tau_t),$$
    which implies that
    $$\vert \psi_G(\tau_t) \vert= \Big\vert G(\tau_t)-  \frac{\tau_t}{q} \Big\vert \leq \Vert G- G_t \Vert_{\infty}.$$ 
    Alternatively, if case (i) from Lemma \ref{lemma:Neuivial_Helper} holds for $G_t$, then $-{\tau_t}/{q}=-G_t( \tau_t)$ implying $G(\tau_t)-{\tau_t}/{q}= (G-G_t)(\tau_t)$, further implying $\vert \psi_G(\tau_t) \vert \leq  \Vert G- G_t \Vert_{\infty}$. Thus, in either case, $ \vert \psi_G(\tau_t) \vert \leq \Vert G- G_t \Vert_{\infty}$ for all $t \in (0,\delta)$. Because $\lim_{t \downarrow 0} \Vert G- G_t \Vert_{\infty} =0$, $\lim_{t \downarrow 0} \psi_G(\tau_t) =0$. By considering existing subsequences of $(\tau_t)_{t>0}$ converging to $\limsup_{t \downarrow 0} \tau_t$ and $\liminf_{t \downarrow 0} \tau_t$ and noting that $\psi_G$ is continuous (because $G$ is continuous), it follows that $ \psi_G(\limsup_{t \downarrow 0} \tau_t)=0$ and $ \psi_G(\liminf_{t \downarrow 0} \tau_t)=0$. Because  $\tau_*$ is the unique and existing $u \in [a,b]$ such that $\psi_G(u)=0$ (see the definition of $[a,b]$ and Proposition \ref{prop:existance_ab}), it follows that $\liminf_{t \downarrow 0} \tau_t=\tau_*=\limsup_{t \downarrow 0} \tau_t $. Hence, $\lim_{t \downarrow 0} \tau_t = \tau_*$.
    
    We would also like to prove that $\lim_{t \downarrow 0} \psi_G(\tau_t)/t=H(\tau_*)$. 
    To show this, we start by showing that for $t \in (0,\delta)$,  \begin{equation}\label{eq:upper_bound_in_HD_proof} \Big| \frac{\psi_G( \tau_t )}{t} - H(\tau_t) \Big| \leq \max \Bigl\{ \Big| H_t(\tau_t) - H(\tau_t) \Big| , \Big| \lim_{u \uparrow \tau_t} ( H_t( u) -H ( u ) ) \Big| \Bigr\}. \end{equation} 
    Let $T_{(i)}$ be the set of $t \in (0,\delta)$ such that $G_t$ is in case (i) of Lemma \ref{lemma:Neuivial_Helper} and let $T_{(ii)}$ be the set of $t \in (0,\delta)$ such that $G_t$ is in case (ii) of Lemma \ref{lemma:Neuivial_Helper}. 
    Note that if $t \in T_{(i)}$, then $\psi_G(\tau_{t})={\tau_{t}}/{q}-G(\tau_{t})=(G_{t}-G)(\tau_{t})=t H_{t}(\tau_{t})$ which implies that ${\psi_G(\tau_{t})}/{t}-H(\tau_t)= H_{t}(\tau_{t})-H(\tau_t)$ and hence \eqref{eq:upper_bound_in_HD_proof} holds for $t \in T_{(i)}$. 
    If instead $t \in T_{(ii)}$, then
    $$G_{t}( \tau_{t}) \leq \frac{\tau_{t}}{q} \leq \lim_{u \uparrow \tau_{t}} G_{t}(u),$$ 
    so subtracting $G(\tau_t)$ from each side and using the fact that $G$ is continuous, we get that 
    $$(G_t-G)(\tau_t) \leq \psi_G(\tau_t) \leq  \lim_{u \uparrow \tau_{t}}  (G_t-G)(u).$$ Recalling that $G_t-G=tH_t$, we can divide each side of the above inequality by $t$ and subtract $H(\tau_t)$ to get $$H_t (\tau_t) - H(\tau_t) \leq \frac{ \psi_G(\tau_t)}{t} -H(\tau_t) \leq  \lim_{u \uparrow \tau_t} H_t( u) -H ( \tau_t ),$$ where the RHS can be rewritten as $ \lim_{u \uparrow \tau_t} ( H_t( u) -H ( u ) ) $ by continuity of $H$. 
    Thus, \eqref{eq:upper_bound_in_HD_proof} holds for any $t \in T_{(ii)}$. 
    Since $T_{(i)} \cup T_{(ii)}= (0,\delta)$ it follows that \eqref{eq:upper_bound_in_HD_proof} holds for all $t \in (0,\delta)$.
    
    Now observe that both $\vert H_t(\tau_t) - H(\tau_t) \vert \leq \Vert H_t - H \Vert_{\infty}$ and $\vert \lim_{u \uparrow \tau_t} ( H_t( u) -H ( u ) ) \vert \leq \Vert H_t - H \Vert_{\infty}$ hold. Therefore, \eqref{eq:upper_bound_in_HD_proof} implies that for any $t \in (0,\delta)$, $\vert \psi_G( \tau_t )/t - H(\tau_t) \vert \leq \Vert H_t - H \Vert_{\infty}$, and so
$$\begin{aligned} \Big\vert \frac{\psi_G( \tau_t )}{t} - H(\tau_*) \Big\vert \leq &   \Big\vert \frac{\psi_G( \tau_t )}{t} - H(\tau_t) \Big\vert+ \vert H(\tau_t) - H(\tau_*) \vert \\ \leq & \Vert H_t - H \Vert_{\infty}+ \vert H(\tau_t) - H(\tau_*) \vert. \end{aligned}$$ 
    Taking the limit as $t \downarrow 0$ of each side above completes the proof that $\lim_{t \downarrow 0} \psi_G(\tau_t)/t=H(\tau_*)$,  where the fact that $\lim_{t \downarrow 0}  \vert H(\tau_t) - H(\tau_*) \vert=0$ holds by the composite limit theorem and our earlier result that $\lim_{t \downarrow 0} \tau_t = \tau_*$.
    
    To complete the proof of \eqref{eq:T_is_HadDiff}, note that $c_G= 1/q-G'(\tau_*)= \psi_G'(\tau_*)$, and hence by the alternative definition of a derivative $$c_G=  \psi_G'(\tau_*) = \lim_{u \rightarrow \tau_*} \frac{\psi_G(u)-\psi_G(\tau_*)}{u-\tau_*}=\lim_{u \rightarrow \tau_*} \frac{\psi_G(u)}{u-\tau_*}=\lim_{t \downarrow 0} \frac{\psi_G(\tau_t)}{\tau_t-\tau_*},$$ where the last equality holds because $\lim_{t \downarrow 0} \tau_t = \tau_*$ and because the limit on the LHS exists by Condition \ref{cond:F0F1_differfentiable}. Therefore, for $t >0$, $$\frac{\mathcal{T}(G+tH_t)-\mathcal{T}(G)}{t} = \frac{\tau_t-\tau_*}{t} = \frac{\psi_G(\tau_t)}{t} \Big( \frac{\psi_G(\tau_t)}{\tau_t-\tau_*}  \Big)^{-1} $$ and hence, $$\lim_{t \downarrow 0} \frac{\mathcal{T}(G+tH_t)-\mathcal{T}(G)}{t}=  \Big( \lim_{t \downarrow 0} \frac{\psi_G(\tau_t)}{t} \Big) \Big( \lim_{t \downarrow 0}\frac{\psi_G(\tau_t)}{\tau_t-\tau_*}  \Big)^{-1} = \frac{H(\tau_*)}{c_G}$$ proving \eqref{eq:T_is_HadDiff}. The above proof of \eqref{eq:T_is_HadDiff} holds for any fixed $H \in C[a,b]$ and fixed collection $(H_t)_{t>0}$ of $H_t \in D[a,b]$ such that $\lim_{t \downarrow 0} \Vert  H_t -H \Vert_{\infty} = 0$. Thus, the Hadamard derivative of $\mathcal{T}$ at $G$ tangentially to $C[a,b]$ is given by $\dot{\mathcal{T}}_G(H)= H(\tau_*)/c_G$.
\end{proof}

Using Proposition \ref{prop:HadDeriv_T} and the chain rule for Hadamard differentiability we can compute the Hadamard derivatives of $\Psi^{(\FPR)}$, $\mathcal{R}$, and $\Psi^{(\FDP)}$ at $(F_0,F_1)$ tangentially to $C[a,b]_2$. We start by computing the Hadamard derivative of $\Psi^{(\FPR)}$.

\begin{proposition}\label{prop:FPRHadDeriv}
    $\Psi^{(\emph{\FPR})}$ (defined in Equation \eqref{eq:PsiFPRDef}) is Hadamard differentiable at $(F_0,F_1)$ tangentially to $C[a,b]_2$ with $$\dot{\Psi}_{(F_0,F_1)}^{(\emph{\FPR})}(H_0,H_1)= H_0(\tau_*)+F_0'(\tau_*) \big(H_0(\tau_*)+H_1(\tau_*) \big)/c_G$$ 
    for $(H_0,H_1) \in C[a,b]_2$ being the Hadamard derivative at $(F_0,F_1)$.
\end{proposition}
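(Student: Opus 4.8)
The plan is to realise $\Psi^{(\FPR)}$ as a composition and invoke the chain rule for Hadamard differentiability together with Proposition~\ref{prop:HadDeriv_T}. Recall $\Psi^{(\FPR)}(H_0,H_1)=H_0\big(\mathcal{T}(H_0+H_1)\big)$. The map $(H_0,H_1)\mapsto H_0+H_1$ is linear and continuous from $D[a,b]_2$ into $D[a,b]$, hence Hadamard differentiable everywhere with itself as derivative; composing with Proposition~\ref{prop:HadDeriv_T} (using $G=F_0+F_1$) shows that $(H_0,H_1)\mapsto\mathcal{T}(H_0+H_1)$ is Hadamard differentiable at $(F_0,F_1)$ tangentially to $C[a,b]_2$ with derivative $(H_0,H_1)\mapsto\big(H_0(\tau_*)+H_1(\tau_*)\big)/c_G$. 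More concretely, borrowing the notation of the proof of that proposition, for any $H_0,H_1\in C[a,b]$ and any $H_{r,t}\in D[a,b]$ with $\Vert H_{r,t}-H_r\Vert_{\infty}\to0$, setting $\tau_t\equiv\mathcal{T}\big(F_0+F_1+t(H_{0,t}+H_{1,t})\big)$ one has $\tau_t\to\tau_*$ and $(\tau_t-\tau_*)/t\to\big(H_0(\tau_*)+H_1(\tau_*)\big)/c_G$ as $t\downarrow0$.

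It then remains to differentiate the evaluation step, i.e.\ to control $(F_0+tH_{0,t})(\tau_t)-F_0(\tau_*)$. Writing
$$
\frac{(F_0+tH_{0,t})(\tau_t)-F_0(\tau_*)}{t}=\frac{F_0(\tau_t)-F_0(\tau_*)}{t}+H_{0,t}(\tau_t),
$$
I would argue as follows. Since $\tau_*\in(a,b)$ by Proposition~\ref{prop:existance_ab} and $\tau_t\to\tau_*$, for small $t$ the point $\tau_t$ lies in a neighbourhood of $\tau_*$ on which Condition~\ref{cond:F0F1_differfentiable} applies; writing $F_0(\tau_t)-F_0(\tau_*)=F_0'(\tau_*)(\tau_t-\tau_*)+o(|\tau_t-\tau_*|)$ and dividing by $t$ gives $F_0'(\tau_*)(\tau_t-\tau_*)/t$ plus a remainder of size at most $\frac{o(|\tau_t-\tau_*|)}{|\tau_t-\tau_*|}\cdot\big|(\tau_t-\tau_*)/t\big|$, which vanishes because $(\tau_t-\tau_*)/t$ converges (hence is bounded) while the first factor tends to $0$ (the remainder being exactly $0$ when $\tau_t=\tau_*$). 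Thus $\frac{F_0(\tau_t)-F_0(\tau_*)}{t}\to F_0'(\tau_*)\big(H_0(\tau_*)+H_1(\tau_*)\big)/c_G$. For the other term, continuity of $H_0$ on $[a,b]$ gives $|H_{0,t}(\tau_t)-H_0(\tau_*)|\le\Vert H_{0,t}-H_0\Vert_{\infty}+|H_0(\tau_t)-H_0(\tau_*)|\to0$.

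Adding the two limits shows that the difference quotient of $\Psi^{(\FPR)}$ along the path $(F_0+tH_{0,t},F_1+tH_{1,t})$ converges to $H_0(\tau_*)+F_0'(\tau_*)\big(H_0(\tau_*)+H_1(\tau_*)\big)/c_G$, which is linear in $(H_0,H_1)$; by Definition~\ref{def:Hadamard_Differentiability} this is exactly the claimed Hadamard derivative $\dot{\Psi}_{(F_0,F_1)}^{(\FPR)}$. I expect the evaluation-at-a-moving-point step to be the main obstacle: the bare evaluation functional $(\phi,u)\mapsto\phi(u)$ is not Hadamard differentiable at a generic point, so the argument genuinely relies on differentiability of $F_0$ at $\tau_*$ (Condition~\ref{cond:F0F1_differfentiable}), on $\tau_*$ being interior to $[a,b]$ (Proposition~\ref{prop:existance_ab}), and on a little care at perturbations with $\tau_t=\tau_*$; once these are handled, feeding $\dot{\mathcal{T}}_G$ through the chain rule is routine, and the same scheme will serve for $\mathcal{R}$ and $\Psi^{(\FDP)}$ afterwards.
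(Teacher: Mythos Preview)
Your proposal is correct and follows essentially the same route as the paper: decompose the difference quotient as $H_{0,t}(\tau_t)+\big(F_0(\tau_t)-F_0(\tau_*)\big)/t$, use $\tau_t\to\tau_*$ and continuity of $H_0$ for the first term, and differentiability of $F_0$ at $\tau_*$ combined with Proposition~\ref{prop:HadDeriv_T} for the second. The only cosmetic difference is that the paper packages the second step as the chain rule for Hadamard derivatives applied to $F_0\circ\mathcal{T}$ (citing Theorem~20.9 in \cite{VDV}), whereas you unpack it by hand via the first-order Taylor expansion $F_0(\tau_t)-F_0(\tau_*)=F_0'(\tau_*)(\tau_t-\tau_*)+o(|\tau_t-\tau_*|)$ together with boundedness of $(\tau_t-\tau_*)/t$; these are the same argument.
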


\begin{proof}

To compute the Hadamard derivative of $\Psi^{(\text{FPR})}$ at $(F_0,F_1)$ tangentially to $C[a,b]_2$ fix $(H_0,H_1) \in C[a,b]_2$ and a collection $\big( (H_{0t},H_{1t}) \big)_{t>0}$ of elements of $D[a,b]_2$ such that $$\lim_{t \downarrow 0} \Vert (H_{0t},H_{1t}) - (H_0,H_1) \Vert_{\infty}=0.$$ Let $F_{0t} \equiv F_0+tH_{0t}$, $F_{1t} \equiv F_1+tH_{1t}$, and $G_t \equiv F_{0t} +F_{1t}  = (F_0+F_1)+t(H_{0t}+H_{1t})$ where $H_{0t}+H_{1t} \rightarrow H_0+H_1$ as $t \downarrow 0$. Letting $\tau_t \equiv \mathcal{T}(G_t)$ and recalling $\tau_*= \mathcal{T}(G)$, note that \begin{equation}\label{eq:FPRProofEq1} \frac{\Psi^{(\text{FPR})}(F_{0t}, F_{1t}) - \Psi^{(\text{FPR})}(F_0, F_1) }{t}=  H_{0t}(\tau_t)+ \frac{F_0(\tau_t)-F_0(\tau_*) }{t} .\end{equation} 
Since $\mathcal{T}$ is Hadamard differentiable at $G$ tangentially to $C[a,b]$ (see Proposition \ref{prop:HadDeriv_T}) and $F_0$ is differentiable at $\mathcal{T}(G)$, by the chain rule for Hadamard differentiability (see, for example, Theorem 20.9 in \cite{VDV}), it follows that $F_0 \circ \mathcal{T} : D[a,b] \rightarrow [0,1]$ is Hadamard differentiable at $G$ tangentially to $C[a,b]$ with Hadamard derivative given by $F_0'(\mathcal{T}(G)) \dot{\mathcal{T}}_G (H)$ for $H \in C[a,b]$. 
By the definition of Hadamard differentiability and since $G_t=(F_0+F_1)+t(H_{0t}+H_{1t})$ where $H_{0t}+H_{1t} \rightarrow H_0+H_1$ as $t \downarrow 0$, it follows that 
$$\lim_{t \downarrow 0} \frac{F_0(\tau_t)-F_0(\tau_*) }{t}=\lim_{t \downarrow 0} \frac{F_0(\mathcal{T}(G_t))-F_0(\mathcal{T}(G)) }{t} = F_0'(\mathcal{T}(G)) \dot{\mathcal{T}}_G (H_0+H_1).$$ 
By combining the above result with the formula from Proposition \ref{prop:HadDeriv_T},
\begin{equation}\label{eq:FPRLimit2} \lim_{t \downarrow 0} \frac{F_0(\tau_t)-F_0(\tau_*) }{t} =\frac{F_0'(\tau_*) \big(H_0(\tau_*)+H_1(\tau_*) \big) }{c_G}. \end{equation}
Also note that
$$\begin{aligned} \vert H_{0t} ( \tau_t ) - H_0( \tau_*) \vert = & \vert  H_{0t} ( \tau_t ) - H_0 ( \tau_t )\vert + \vert H_0 ( \tau_t ) -H_0( \tau_*) \vert \\ \leq & \Vert H_{0t} -H_0 \Vert_{\infty} + \vert H_0 ( \tau_t ) -H_0(\tau_*) \vert. \end{aligned}$$

 We know that $\lim_{t \downarrow 0} \Vert H_{0t} -H_0 \Vert_{\infty}=0$. Further since $H_0$ is continuous and we showed in the proof of Proposition \ref{prop:HadDeriv_T} that $\lim_{t \downarrow 0}  \tau_t =  \tau_*$ (in a setting where $\tau_t=\mathcal{T}(G_t)$ and $G_t=G+tH_t \in D[a,b]$ for some $H_t \rightarrow H \in C[a,b]$ as $t \downarrow 0$) we get by the composite limit theorem that $\lim_{t \downarrow 0} \vert H_0 ( \tau_t ) -H_0( \tau_*) \vert =0$. Hence by an above inequality it follows that $\lim_{t \downarrow 0} H_{0t} ( \tau_t )=H_0(\tau_*)$.
 
 Combining this with \eqref{eq:FPRProofEq1} and \eqref{eq:FPRLimit2}, we get that $$\begin{aligned} \lim_{t \downarrow 0}  \frac{\Psi^{(\text{FPR})}(F_{0t}, F_{1t}) - \Psi^{(\text{FPR})}(F_0, F_1) }{t}= H_0(\tau_*)+\frac{F_0'(\tau_*) \big(H_0(\tau_*)+H_1(\tau_*) \big) }{c_G}. \end{aligned}$$ Since this argument holds for any fixed $(H_0,H_1) \in C[a,b]_2$ and collection $\big( (H_{0t},H_{1t}) \big)_{t>0}$ of elements of $D[a,b]_2$  such that $\lim_{t \downarrow 0} \Vert (H_{0t},H_{1t}) - (H_0,H_1) \Vert_{\infty}=0$ where $F_{0t} \equiv F_0+tH_{0t}$ and $F_{1t} \equiv F_1+tH_{1t}$, it follows that $\Psi^{(\text{FPR})}$ is Hadamard differentiable at $(F_0,F_1)$ tangentially to $C[a,b]_2$ with Hadamard derivative given by \begin{align*} \dot{\Psi}_{(F_0,F_1)}^{(\text{FPR})}(H_0,H_1)= & H_0(\tau_*)+F_0'(\tau_*) \big(H_0(\tau_*)+H_1(\tau_*) \big)/c_G. & \qedhere \end{align*}
\end{proof}


\begin{proposition}\label{prop:RHadDeriv}
    $\mathcal{R}$ (defined in Equation \eqref{eq:mathcalRDef}) is Hadamard differentiable at $(F_0,F_1)$ tangentially to $C[a,b]_2$ with $$\dot{\mathcal{R}}_{(F_0,F_1)}(H_0,H_1)= \big( H_0(\tau_*)+H_1(\tau_*) \big)/(qc_G)$$ for $(H_0,H_1) \in C[a,b]_2$ being the Hadamard derivative at $(F_0,F_1)$.
\end{proposition}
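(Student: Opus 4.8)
The plan is to mirror the proof of Proposition~\ref{prop:FPRHadDeriv}, with the continuous function $G=F_0+F_1$ playing the role that $F_0$ played there. Note first that $\mathcal{R}(F_0,F_1)=G(\mathcal{T}(G))=G(\tau_*)=\tau_*/q$, using continuity of $G$ (Proposition~\ref{prop:G_is_continuous}) together with Proposition~\ref{prop:existance_ab}. Now fix $(H_0,H_1)\in C[a,b]_2$ and a family $\big((H_{0t},H_{1t})\big)_{t>0}$ of elements of $D[a,b]_2$ with $\lim_{t\downarrow0}\Vert(H_{0t},H_{1t})-(H_0,H_1)\Vert_{\infty}=0$. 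Write $F_{rt}=F_r+tH_{rt}$, $G_t=F_{0t}+F_{1t}=G+t(H_{0t}+H_{1t})$, $H_t=H_{0t}+H_{1t}$, $H=H_0+H_1\in C[a,b]$, and $\tau_t=\mathcal{T}(G_t)$. The key step is the decomposition
\[
\frac{\mathcal{R}(F_{0t},F_{1t})-\mathcal{R}(F_0,F_1)}{t}=\frac{G_t(\tau_t)-G(\tau_*)}{t}=H_t(\tau_t)+\frac{G(\tau_t)-G(\tau_*)}{t},
\]
where the last equality uses $G_t(\tau_t)-G(\tau_t)=tH_t(\tau_t)$.

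I would then treat the two terms on the right separately; both are already essentially available. For the first term, the proof of Proposition~\ref{prop:HadDeriv_T} shows that $\tau_t\to\tau_*$ as $t\downarrow0$ in exactly this setting (this is where Condition~\ref{cond:noaccum} and the structure of $[a,b]$ from Proposition~\ref{prop:existance_ab} are used), and combining $\tau_t\to\tau_*$ with $\Vert H_t-H\Vert_{\infty}\to0$ and continuity of $H$ via the composite limit theorem gives $H_t(\tau_t)\to H(\tau_*)$. For the second term, since $F_0$ and $F_1$ are differentiable at $\tau_*$ by Condition~\ref{cond:F0F1_differfentiable}, so is $G$, with $G'(\tau_*)=F_0'(\tau_*)+F_1'(\tau_*)$; and $\mathcal{T}$ is Hadamard differentiable at $G$ tangentially to $C[a,b]$ by Proposition~\ref{prop:HadDeriv_T}. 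By the chain rule for Hadamard differentiability (Theorem~20.9 of \cite{VDV}), $G\circ\mathcal{T}$ is Hadamard differentiable at $G$ tangentially to $C[a,b]$, so
\[
\lim_{t\downarrow0}\frac{G(\tau_t)-G(\tau_*)}{t}=\lim_{t\downarrow0}\frac{G(\mathcal{T}(G_t))-G(\mathcal{T}(G))}{t}=G'(\tau_*)\,\dot{\mathcal{T}}_G(H)=\frac{G'(\tau_*)\,H(\tau_*)}{c_G}.
\]

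Finally I would add the two limits and simplify. Their sum is $H(\tau_*)\big(1+G'(\tau_*)/c_G\big)=H(\tau_*)\big(c_G+G'(\tau_*)\big)/c_G$, and since $c_G=1/q-G'(\tau_*)$ by~\eqref{eq:cG_def} we have $c_G+G'(\tau_*)=1/q$, so the limiting difference quotient equals $H(\tau_*)/(qc_G)=\big(H_0(\tau_*)+H_1(\tau_*)\big)/(qc_G)$. Since this holds for every admissible $(H_0,H_1)$ and every admissible perturbing family, and the resulting map is linear in $(H_0,H_1)$, this establishes the claim. I do not expect a genuine obstacle: the convergence $\tau_t\to\tau_*$, the chain rule, and the positivity $c_G>0$ are all already in hand from Propositions~\ref{prop:HadDeriv_T} and~\ref{prop:existance_ab}. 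The one point that needs a little care, just as in Proposition~\ref{prop:FPRHadDeriv}, is that $H_t$ is only càdlàg while being evaluated at the moving argument $\tau_t$; this is precisely what forces the split above, so that the càdlàg perturbation is controlled by uniform convergence to the continuous limit $H$ rather than by any continuity of $G_t$ itself.
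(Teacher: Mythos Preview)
Your proof is correct. It differs from the paper's only in organization: the paper observes that $\mathcal{R}=\Psi^{(\FPR)}+\Psi^{(\text{A})}$, where $\Psi^{(\text{A})}(H_0,H_1)=H_1(\mathcal{T}(H_0+H_1))$ is just $\Psi^{(\FPR)}$ with the two coordinates swapped, and then invokes Proposition~\ref{prop:FPRHadDeriv} twice (once for each summand) before adding the derivatives and simplifying $(H_0(\tau_*)+H_1(\tau_*))\big(1+(F_0'(\tau_*)+F_1'(\tau_*))/c_G\big)=(H_0(\tau_*)+H_1(\tau_*))/(qc_G)$. You instead rerun the argument of Proposition~\ref{prop:FPRHadDeriv} directly with $G$ in the outer slot, which is equally valid and arguably more transparent; the paper's route is a line or two shorter because it reuses the earlier proposition wholesale rather than reproducing the $\tau_t\to\tau_*$ and chain-rule steps. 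Both approaches rest on exactly the same ingredients: Proposition~\ref{prop:HadDeriv_T}, differentiability of the outer function at $\tau_*$, and the identity $c_G+G'(\tau_*)=1/q$.
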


\begin{proof}
 
 Define $\Psi^{(\text{A})}: D[a,b]_2 \rightarrow \real$ via 
 $\Psi^{(\text{A})}(H_0,H_1)=H_1 \big( \mathcal{T}(H_0+H_1) \big)$ for any $(H_0,H_1) \in D[a,b]_2$. Swapping the indices of the two $D[a,b]$ functions in the input of $\Psi^{(\text{A})}$ gives exactly the same function as $\Psi^{(\FPR)}$. Therefore, by the argument in  Proposition \ref{prop:FPRHadDeriv}, $\Psi^{(\text{A})}$ is Hadamard differentiable at $(F_0,F_1)$ tangentially to $C[a,b]_2$ with Hadamard derivative given by $\dot{\Psi}_{(F_0,F_1)}^{(\text{A})}(H_0,H_1)= H_1(\tau_*)+F_1'(\tau_*) \big(H_0(\tau_*)+H_1(\tau_*) \big)/c_G$. 
 Because $\mathcal{R}=\Psi^{(\FPR)}+\Psi^{(\text{A})}$ 
 we find that $\mathcal{R}$ is Hadamard differentiable at $(F_0,F_1)$ tangentially to $C[a,b]_2$ with Hadamard derivative 
 $\dot{\mathcal{R}}_{(F_0,F_1)}=\dot{\Psi}_{(F_0,F_1)}^{(\FPR)}+\dot{\Psi}_{(F_0,F_1)}^{(\text{A})}$. Combining our formula for $\dot{\Psi}_{(F_0,F_1)}^{(\text{A})}$ with Proposition \ref{prop:FPRHadDeriv} we get that for any $(H_0,H_1) \in C[a,b]_2$, \begin{align*} \dot{\mathcal{R}}_{(F_0,F_1)}(H_0,H_1) = & \big( H_0(\tau_*)+H_1(\tau_*) \big) \Big(1+\frac{F_0'(\tau_*)+F_1'(\tau_*)}{c_G} \Big) \\ = &  \frac{ H_0(\tau_*)+H_1(\tau_*) }{qc_G}.  & \qedhere \end{align*} 
 \end{proof}

\begin{proposition}\label{prop:FDPHadDeriv}
    $\Psi^{(\emph{\FDP})}$ (defined in Equation \eqref{eq:PsiFDPDef}) is Hadamard differentiable at $(F_0,F_1)$ tangentially to $C[a,b]_2$ with $$\dot{\Psi}_{(F_0,F_1)}^{(\emph{\FDP})}(H_0,H_1)= q \big( (1+\alpha) H_0(\tau_*) + \alpha H_1(\tau_*) \big)/\tau_*$$ 
    for $(H_0,H_1) \in C[a,b]_2$ being the formula for the Hadamard derivative at $(F_0,F_1)$, and where $\alpha$ is defined in $\eqref{eq:Def_of_alpha}$.
\end{proposition}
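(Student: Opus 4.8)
The plan is to avoid re-running the long direct argument from Proposition~\ref{prop:FPRHadDeriv} and instead obtain the result by combining Propositions~\ref{prop:FPRHadDeriv} and~\ref{prop:RHadDeriv} through the chain rule for Hadamard differentiability, using the identity $\Psi^{(\FDP)} = \Psi^{(\FPR)}/\mathcal{R}$ from~\eqref{eq:PsiFDPDef}. First I would package the two known derivatives into the single map $(\Psi^{(\FPR)},\mathcal{R}):D[a,b]_2\to\real^2$, which is Hadamard differentiable at $(F_0,F_1)$ tangentially to $C[a,b]_2$ (its two coordinates are, by Propositions~\ref{prop:FPRHadDeriv} and~\ref{prop:RHadDeriv}), with derivative $\bigl(\dot{\Psi}^{(\FPR)}_{(F_0,F_1)},\dot{\mathcal{R}}_{(F_0,F_1)}\bigr)$. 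Then I would compose with the division map $\mathrm{div}:(u,v)\mapsto u/v$, which is (Fréchet, hence Hadamard) differentiable at $\bigl(\Psi^{(\FPR)}(F_0,F_1),\mathcal{R}(F_0,F_1)\bigr)=\bigl(F_0(\tau_*),G(\tau_*)\bigr)=\bigl(F_0(\tau_*),\tau_*/q\bigr)$, the key point being that the second coordinate $\tau_*/q$ is strictly positive by Proposition~\ref{prop:existance_ab} so $\mathrm{div}$ is smooth in a neighborhood. The chain rule (Theorem~20.9 in \cite{VDV}) then gives Hadamard differentiability of $\Psi^{(\FDP)}=\mathrm{div}\circ(\Psi^{(\FPR)},\mathcal{R})$ at $(F_0,F_1)$ tangentially to $C[a,b]_2$, with the usual quotient-rule expression
$$
\dot{\Psi}^{(\FDP)}_{(F_0,F_1)}(H_0,H_1)=\frac{\dot{\Psi}^{(\FPR)}_{(F_0,F_1)}(H_0,H_1)\,\mathcal{R}(F_0,F_1)-\Psi^{(\FPR)}(F_0,F_1)\,\dot{\mathcal{R}}_{(F_0,F_1)}(H_0,H_1)}{\mathcal{R}(F_0,F_1)^2}.
$$

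Next I would substitute the explicit values: $\Psi^{(\FPR)}(F_0,F_1)=F_0(\tau_*)$, $\mathcal{R}(F_0,F_1)=\tau_*/q$, and the two derivatives $\dot{\Psi}^{(\FPR)}_{(F_0,F_1)}(H_0,H_1)=H_0(\tau_*)+F_0'(\tau_*)\bigl(H_0(\tau_*)+H_1(\tau_*)\bigr)/c_G$ and $\dot{\mathcal{R}}_{(F_0,F_1)}(H_0,H_1)=\bigl(H_0(\tau_*)+H_1(\tau_*)\bigr)/(qc_G)$, where $c_G=1/q-G'(\tau_*)>0$ by~\eqref{eq:cG_def}. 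Writing $h\equiv H_0(\tau_*)+H_1(\tau_*)$ and clearing denominators, the numerator becomes $\tfrac{1}{q}\bigl(\tau_* H_0(\tau_*)+h(\tau_* F_0'(\tau_*)-F_0(\tau_*))/c_G\bigr)$, and dividing by $\mathcal{R}(F_0,F_1)^2=\tau_*^2/q^2$ yields $\tfrac{q}{\tau_*}\bigl(H_0(\tau_*)+\alpha h\bigr)$, where $\alpha=\bigl(F_0'(\tau_*)-F_0(\tau_*)/\tau_*\bigr)/c_G$ coincides with the definition in~\eqref{eq:Def_of_alpha}. Expanding $h$ gives exactly $\dot{\Psi}^{(\FDP)}_{(F_0,F_1)}(H_0,H_1)=q\bigl((1+\alpha)H_0(\tau_*)+\alpha H_1(\tau_*)\bigr)/\tau_*$, as claimed.

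I do not expect a serious obstacle here: this proposition is essentially bookkeeping on top of the substantive work already done in Propositions~\ref{prop:HadDeriv_T}, \ref{prop:FPRHadDeriv}, and~\ref{prop:RHadDeriv}. The only points requiring a word of care are (i) verifying that the denominator $\mathcal{R}(F_0,F_1)=\tau_*/q$ is bounded away from zero so the quotient map is differentiable at the relevant point — this is immediate from $\tau_*>0$ (Condition~\ref{cond:noaccum}(i) / Proposition~\ref{prop:existance_ab}) — and (ii) checking that the direction $h_t$ supplied to $\mathrm{div}$ in the chain-rule statement, namely the pair $\bigl((\Psi^{(\FPR)},\mathcal{R})(F_0+th_t)-(\Psi^{(\FPR)},\mathcal{R})(F_0,F_1)\bigr)/t$, indeed converges in $\real^2$, which is precisely the content of the Hadamard differentiability of $(\Psi^{(\FPR)},\mathcal{R})$ we invoked. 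Alternatively, if one preferred a self-contained route, one could mimic the proof of Proposition~\ref{prop:FPRHadDeriv} verbatim, writing $\Psi^{(\FDP)}(F_{0t},F_{1t})=F_{0t}(\tau_t)/G_t(\tau_t)$, using $\lim_{t\downarrow0}\tau_t=\tau_*$ and the Hadamard derivative of $\mathcal{T}$ at $G$ from Proposition~\ref{prop:HadDeriv_T} together with differentiability of $F_0$ and $G$ at $\tau_*$; but the chain-rule argument is shorter and reuses everything already in place.
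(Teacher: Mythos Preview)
Your proposal is correct and follows essentially the same approach as the paper: both package $(\Psi^{(\FPR)},\mathcal{R})$ as a map into $\real^2$, compose with the division map $f(x,y)=x/y$, invoke the chain rule for Hadamard differentiability (Theorem~20.9 in \cite{VDV}) after checking $\mathcal{R}(F_0,F_1)=\tau_*/q\neq 0$, and then substitute and simplify to reach the stated formula. Your algebraic bookkeeping and the points of care you flag match the paper's argument almost line for line.
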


\begin{proof}
Define $\Theta : D[a,b]_2 \rightarrow \real^2$ 
as
$$\Theta (H_0,H_1) = 
\big( \Psi^{(\FPR)}(H_0,H_1) , \mathcal{R}(H_0,H_1)  \big)$$ 
for $(H_0,H_1) \in D[a,b]_2$. It is easy to see that the Hadamard derivative of $\Theta$ at $(F_0,F_1)$ tangentially to $C[a,b]_2$ is given by $\dot{\Theta}_{(F_0,F_1)} (H_0,H_1)= \big( \dot{\Psi}_{(F_0,F_1)}^{(\FPR)}(H_0,H_1), \dot{\mathcal{R}}_{(F_0,F_1)}(H_0,H_1)  \big)$ and by Propositions \ref{prop:FPRHadDeriv} and \ref{prop:RHadDeriv}, $\Theta$ is indeed Hadamard differentiable at $(F_0,F_1)$ tangentially to $C[a,b]_2$. Now let $f : \real^2 \rightarrow \real$ be given by $f(x,y)=x/y$. Note that $\nabla f_{(x,y)} = \big( 1/y , -x/y^2 \big)$ for $y \neq 0$. Note that for any $(H_0,H_1) \in D[a,b]_2$, $\Psi^{(\FDP)}(H_0,H_1) = f( \Theta(H_0,H_1))$. Hence by the chain rule for Hadamard differentiability (Theorem 20.9 in \cite{VDV}) we get that if $\mathcal{R}(F_0,F_1) \neq 0$, then  $\Psi^{(\FDP)}$ is Hadamard differentiable at $(F_0,F_1)$ tangentially to $C[a,b]_2$ with Hadamard derivative given by $ (\nabla f_{\Theta(F_0,F_1)})^\tran \dot{\Theta}_{(F_0,F_1)} (H_0,H_1)$ for $(H_0,H_1) \in C[a,b]_2$. But since $\mathcal{R}(F_0,F_1)=(F_0+F_1)( \mathcal{T}(F_0+F_1) )=G(\tau_*)=\tau_*/q \neq 0$, $\Psi^{(\FDP)}$ is indeed Hadamard differentiable at $(F_0,F_1)$ tangentially to $C[a,b]_2$ with Hadamard derivative given by $ (\nabla f_{\Theta(F_0,F_1)})^\tran \dot{\Theta}_{(F_0,F_1)} (H_0,H_1)$. We compute this value more specifically for $(H_0,H_1) \in C[a,b]_2$ using Propositions \ref{prop:FPRHadDeriv} and \ref{prop:RHadDeriv}: 
$$\begin{aligned} \dot{\Psi}_{(F_0,F_1)}^{(\FDP)} (H_0,H_1 )= &  \frac{\dot{\Psi}^{(\FPR)}_{(F_0,F_1)}(H_0,H_1)}{\mathcal{R}(F_0,F_1)} - \frac{\Psi^{(\FPR)}(F_0,F_1) \dot{\mathcal{R}}_{(F_0,F_1)}(H_0,H_1) }{ ( \mathcal{R}(F_0,F_1))^2 }  \\ 
= &  \frac{q}{\tau_*} \Big( \dot{\Psi}^{(\FPR)}_{(F_0,F_1)}(H_0,H_1) - \frac{q}{\tau_*}  F_0(\tau_*) \dot{\mathcal{R}}_{(F_0,F_1)}(H_0,H_1) \Big)  \\ 
= & \frac{q}{\tau_*} \Big( H_0(\tau_*)+ \frac{\big(F_0'(\tau_*) -F_0(\tau_*)/\tau_* \big) \big(H_0(\tau_*)+H_1(\tau_*) \big)}{c_G} \Big)  \\
= & \frac{q}{\tau_*} \big( (1+\alpha) H_0(\tau_*)+\alpha H_1(\tau_*) \big). \end{aligned}$$
The last step above uses our previous definition of $c_G$ in \eqref{eq:cG_def} and the definition of $\alpha$ in \eqref{eq:Def_of_alpha}.
\end{proof}


\end{document}